\newcommand{\op}[1]{{\color{violet}{#1}}}
\newcommand{\ls}[1]{{\color{teal}{#1}}}
\newcommand{\C}{\mathbb{C}}
\newcommand{\R}{\mathbb{R}}
\newcommand{\Z}{\mathbb{Z}}
\newcommand{\cptwo}{\C\textup{P}^2}
\newcommand{\cptwobar}{\overline{\C\textup{P}}\,\!^2}
\newcommand{\ti}{\tilde}
\renewcommand{\d}{\partial}
\renewcommand{\div}{\operatorname{div}}
\DeclareMathOperator{\Int}{Int}
\DeclareMathOperator{\Diff}{Diff}
\newtheorem{theorem}{Theorem}[section]
\newtheorem{lemma}[theorem]{Lemma}
\newtheorem{prop}[theorem]{Proposition}
\newtheorem{cor}[theorem]{Corollary}
\theoremstyle{definition}
\newtheorem{definition}[theorem]{Definition}
\newtheorem{notation}[theorem]{Notation}
\newtheorem{remark}[theorem]{Remark}
\newtheorem{example}[theorem]{Example}
\numberwithin{equation}{section}
\title{Sandwiched singularities and nearly Lefschetz fibrations}
\author{Olga Plamenevskaya}
\address{Department of Mathematics, Stony Brook University, Stony Brook, NY,	11794,  U.S.A.}
\email{olga@math.stonybrook.edu}
\author{Laura Starkston}
\address{Deparment of Mathematics, University of California, Davis, 1 Shields Avenue, Davis, CA, 95616, U.S.A.}
\email{lstarkston@math.ucdavis.edu}
\begin{document}
	
	\begin{abstract}
	We study Milnor fibers and symplectic fillings of links of sandwiched singularities, with the goal of contrasting their algebro-geometric deformation theory and symplectic topology.   In the algebro-geometric setting, smoothings of sandwiched singularities are described by  de Jong--van Straten's theory: all Milnor fibers are generated from deformations of a singular plane curve germ associated to the surface singularity. We develop an analog of this theory in the symplectic setting, showing that all minimal symplectic fillings of the links are generated by certain immersed disk arrangements resembling de Jong--van Straten's picture deformations. This paper continues our previous work for a special subclass of singularities; the general case has additional difficulties and new features. The key new ingredient in the present paper is given by spinal open books and nearly Lefschetz fibrations: we use recent work of Min--Roy--Wang to understand symplectic fillings and encode them via multisections of certain Lefschetz fibrations.  As an application, we discuss arrangements that generate unexpected Stein fillings that are different from all Milnor fibers, showing that the links of a large class of sandwiched singularities admit unexpected fillings.    
\end{abstract}

	\maketitle
	
\section{Introduction}

In this paper, we compare and contrast deformation theory and symplectic topology of certain rational surface singularities. Extending our work in \cite{PS}, 
we study Stein fillings of the link of a singularity and compare them to Milnor fibers of possible smoothings. Let $X\subset \C^N$ be a singular complex surface with an isolated singularity at the origin. The intersection $Y= X \cap S^{2N-1}_{r}$ with the sphere of a small radius $r>0$ is 
a smooth 3-manifold called the {\em link of the singularity} $(X,0)$. The canonical contact structure $\xi$ on $Y$ is the distribution of complex tangencies to $Y$; 
the {\em contact link} $(Y, \xi)$ is independent of choices, up to contactomorphism. 
If  $(X, 0)$ admits a smoothing (that is, a deformation to a smooth surface, called a Milnor fiber), then the Milnor fiber carries a Stein structure and gives a Stein filling of $(Y, \xi)$.   For a  general surface singularity, the collection of Stein fillings is typically much larger than that of Milnor fibers, \cite{Akh-Ozbagci1,Akh-Ozbagci2, BNP}.   However, restrictions on the topology of the link often lead to rigidity phenomena, where it turns out that all possible Stein  fillings are generated by Milnor fibers. For example, when the link is a lens space, all minimal symplectic fillings are given by the Milnor fibers of the corresponding cyclic quotient singularity, \cite{Li,NPP-cycl}; analogous statements hold for certain other classes of singularities, \cite{OhtaOno1, Bhu-Ono,PPSU}. We will focus on sandwiched singularities, a subclass of rational singularities that can be defined by a combinatorial hypothesis on the dual resolution graph. The links of rational singularities are L-spaces in the Heegaard Floer sense \cite{Nem}; this, together with other properties, significantly restricts symplectic fillings. We will see that for sandwiched singularities, the symplectic topology of the fillings is still largely informed by the algebro-geometric constructions but rigidity begins to break down. This is the most interesting borderline case.

Inspiration for our constructions comes from de Jong--van Straten deformation theory for sandwiched singularities, \cite{DJVS}.  De Jong--van Straten showed that deformations of a sandwiched singularity $(X, 0)$ can be described via certain deformations of an associated decorated germ of a singular plane curve $(C, 0)$, and that the topology of the Milnor fiber of each smoothing can be easily read off from the corresponding deformed curve configuration. 
Indeed, the Milnor fiber is recovered by blowing up the (transverse) intersection points in the curve configuration in a Milnor ball and then taking the complement of the strict transforms of the components of the arrangement in the blow-up of the ball;  we explain the details in Section~\ref{sandwich-setup}.  In~\cite{PS}, we developed a symplectic analog of the de Jong--van Straten construction for a smaller subclass of rational singularities with reduced fundamental cycle: we showed that all minimal symplectic fillings of their links can be generated by certain smooth disk arrangements in a similar way. The two analogous constructions give insight into similarities and differences between Milnor fibers and more general Stein fillings. For certain links, even those as simple as Seifert fibered spaces, we found {\em unexpected} Stein fillings that are not homeomorphic to any of the Milnor fibers of singularities with the given link.

From the topological viewpoint, the key feature of rational singularities with reduced fundamental cycle is that the associated reducible singular curve germ $(C, 0)$ has {\em smooth} components. This enabled us to work with corresponding deformations at the level of smooth topology. In~\cite{PS}, we showed that each Milnor fiber with its Stein structure carries a compatible planar Lefschetz fibration: the fibration is the complement of a {neighborhood of a} collection of sections in a ``standard'' Lefschetz fibration on a blow-up of a ball in $\C^2$. The sections are related to the components of the germ $(C, 0)$: they are the strict transforms of the components of the deformed germ encoding the Milnor fiber in de Jong--van Straten's picture.  Moreover, each minimal symplectic filling arises 
from a similar planar Lefschetz fibration: this fact follows from Wendl's theorem describing  fillings of planar contact 3-manifolds \cite{We}, and we then encode the fibration via a curve configuration.

In this paper, we address the case of general sandwiched singularities. There are several difficulties compared to the previous case. The curve germ associated to the singularity now has cuspidal components, so one cannot immediately pass to smooth topology. From the symplectic viewpoint, the main difficulty is that the link $(Y, \xi)$ is no longer supported by a planar open book, so Wendl's theorem can no longer be used for classification of fillings. The key ingredient that solves the problem in this case is provided by spinal open books and nearly Lefschetz fibrations, \cite{LVHMW, LVHMW2, HRW}.  
We represent each Milnor fiber in de Jong--van Straten's construction as the complement of a collection of {\em multisections} in a certain standard planar Lefschetz fibration over the disk; a multisection of a fibration is a branched covering of the base. The complement of the multisections carries a {\em nearly Lefschetz fibration}, which has an  additional type of singular fiber along with the usual Lefschetz singularities. The link $(Y, \xi)$ is equipped with a planar {\em spinal} open book (unlike the classical case, its monodromy may interchange the boundary components of the page). We then use a recent result of Min--Roy--Wang \cite{HRW} generalizing Wendl's theorem: {every strong symplectic filling of a spinal planar open book is represented by a nearly Lefschetz fibration with the same planar fiber, compatible with the prescribed open book on the boundary. Our setting is simpler than the general setting of~\cite{HRW}, since the spine (generalization of binding) has very simple topology.
Since nearly Lefschetz fibrations can be thought of as complements of multisections, we can show that the data of the filling can then be encoded via a {\em DJVS immersed disk arrangement}  reminiscent of arrangements given by curve deformations in the de Jong--van Straten theory, see Definition~\ref{def:DJVSarrangement}.} We summarize our main result below, postponing the precise statement and all the details until Section~\ref{sandwich-setup}; see  Theorem~\ref{thm:maintechnical}.  {Note that while the classification results of~\cite{HRW} apply to strong fillings, all our contact $3$-manifolds are rational homology spheres, which erases the distinction between strong and weak fillings~\cite{OhtaOno3}. In all constructions, 
we produce Stein fillings.}

\begin{theorem} \label {thm1-intro} Let $(Y, \xi)$ be the contact link of a sandwiched singularity $(X, 0)$. 
{Then every compatible DJVS arrangement gives a Stein filling 
of $(Y, \xi)$},  
and all minimal weak symplectic fillings of $(Y, \xi)$ arise from the DJVS arrangements. Concretely, the filling is  the complement of {a neighborhood of} the strict transforms of the arrangement components in the appropriate blow-up of the 4-ball.    
\end{theorem}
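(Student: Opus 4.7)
The plan is to reduce the classification of minimal weak symplectic fillings of $(Y,\xi)$ to data on a planar spinal open book, then repackage that data as an immersed disk arrangement in a blow-up of $B^4$. First I would identify a canonical planar spinal open book supporting $(Y,\xi)$: starting from the dual resolution graph of the sandwiched singularity and the associated decorated germ $(C,0)$, one extracts a planar page whose binding components correspond to the boundary circles lying over the ``base'' curves in the sandwich structure, and whose monodromy is dictated by the resolution; planarity of the pages is precisely what the sandwiched hypothesis buys. Crucially, the monodromy may permute the boundary components of the page, which is exactly why we need a spinal (rather than classical) open book.

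Given such an open book, I would then invoke the Min--Roy--Wang structure theorem \cite{HRW} for fillings of planar spinal open books: every minimal weak symplectic filling $W$ of $(Y,\xi)$ admits a compatible nearly Lefschetz fibration over a surface whose page and monodromy agree with those of the spinal open book on the boundary. This produces on $W$ a fibration with the usual Lefschetz singularities together with the extra singular fibers characteristic of the nearly Lefschetz setup. The next step is to construct a ``reference'' filling: by blowing up points of the 4-ball chosen so as to match the base curves and the required multiplicities from $(C,0)$, I obtain a manifold $\tilde B$ carrying a genuine planar Lefschetz fibration $\tilde B\to D^2$ whose induced open book on $\partial \tilde B$ contains the spinal open book of $(Y,\xi)$ in a standard way. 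The comparison between the nearly Lefschetz fibration on $W$ and the Lefschetz fibration on $\tilde B$ is implemented by realizing $W$ as the complement in $\tilde B$ of a collection of properly embedded surfaces that project to $D^2$ as branched covers: these are precisely multisections, and the branching behavior records the ``non-Lefschetz'' singular fibers of $W$.

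To finish, I would verify that the multisections produced this way are immersed disks and that the resulting collection is a DJVS arrangement in the sense of Definition~\ref{def:DJVSarrangement}. Topologically, each multisection should deformation retract to a point in the base fibration away from its branch locus, so that its total space is a disk immersed at its self-intersection points; these self-intersections, together with pairwise intersections of the multisections, carry the combinatorial data of a curve deformation in the De Jong--Van Straten picture. The compatibility conditions (matching multiplicities with the decoration of $(C,0)$, transverse intersections away from prescribed points, correct local models at the blown-up centers) translate directly from the monodromy agreement guaranteed by Min--Roy--Wang and from the combinatorics of $\tilde B$. Reversing the construction, the complement in $\tilde B$ of the strict transforms of the immersed disks recovers $W$ as a Stein domain, giving the ``concretely'' clause of the theorem.

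The main obstacle I expect is the last step: showing that the multisections extracted from an arbitrary nearly Lefschetz fibration really are immersed disks with only the intersection patterns permitted in a DJVS arrangement, rather than higher-genus multisections or configurations that violate the compatibility conditions. This requires a careful analysis of the non-Lefschetz singular fibers of \cite{HRW} in terms of local branched covers and of how the monodromy factorization on the spine determines the topology of each multisection; ruling out extraneous possibilities will likely use the sandwiched hypothesis through the combinatorics of $(C,0)$ and the rationality (L-space) constraints on $(Y,\xi)$ noted in the introduction.
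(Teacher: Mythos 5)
Your first two steps match the paper: the link is supported by a planar spinal open book built from the decorated germ (Theorem~\ref{MilnorOB}, Lemma~\ref{binding-from-function}), and Min--Roy--Wang (Theorem~\ref{thm:MRWfillingshavePANLF}) puts a positive allowable nearly Lefschetz fibration on every minimal filling with the prescribed boundary open book. The gap is in your third step, which is where the real content of the theorem lies. You propose to build a reference blow-up $\tilde B$ of the ball and then ``realize $W$ as the complement in $\tilde B$ of a collection of multisections,'' but no mechanism is given for producing such an embedding: the number and position of the blow-up points are not determined in advance (they vary with the filling, being the marked points of the arrangement you are trying to find), and the claim that an arbitrary filling sits inside some $\tilde B$ as a multisection complement is essentially a restatement of the theorem rather than a step toward it. You correctly flag the hardest point --- showing the extracted multisections are immersed disks with only DJVS-allowed intersection patterns --- but you offer no idea for resolving it, and neither rationality of the link nor the sandwiched combinatorics by itself rules out the bad configurations.

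The paper closes this gap by running the construction in the opposite direction, purely from the abstract monodromy data. The nearly Lefschetz fibration on $W$ is encoded as an ordered list of vanishing cycles and vanishing arcs; from this list one inductively builds a braided wiring diagram (a singular braid whose singularities are transverse multipoints and tangencies, the latter modeling the exotic fibers), choosing at each stage a diffeomorphism of the planar page that makes the next vanishing cycle convex or the next vanishing arc a straight segment and recording the resulting braid words. Lemma~\ref{lem:wiringtocurves} then realizes any such wiring diagram by a symplectic immersed disk arrangement in $D_x\times D_y$, and the braid-monodromy computation of Lemma~\ref{lem:vanishingfromwiring} shows that the nearly Lefschetz fibration produced from this arrangement by Theorem~\ref{thm:smooth-nearlyLefschetz} has exactly the prescribed vanishing cycles and arcs, hence is symplectic deformation equivalent to the given filling (proof of Theorem~\ref{thm:all-fillings-from-arrangements}). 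In other words, the arrangement is not extracted geometrically from $W$ inside a cap; it is manufactured combinatorially so that the DJVS construction reproduces $W$. Without this dictionary between monodromy factorizations and braided wiring diagrams (or an equivalent device), your outline does not yield a proof. A minor additional point: the statement concerns weak fillings, so one should also note that weak fillings of these rational homology sphere links can be deformed to strong ones before applying the structure theorem.
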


Together with de Jong--van Straten theory, Theorem~\ref{thm1-intro} says that  Stein fillings of the link $(Y, \xi)$ and Milnor fibers of singularities in the given topological type can be reconstructed by the same procedure from  curve arrangements with similar properties. Thus, the difference between Milnor fibers and Stein fillings lies in the difference between the specific type of curve arrangements and their relation to the decorated germ encoding $(X, 0)$. For Milnor fibers, the curve arrangements are given by algebraic deformations of the decorated germ, whereas for Stein fillings they must only satisfy a certain boundary compatibility condition. Capturing this difference is quite subtle: it is difficult to tell whether a DJVS immersed disk arrangement is isotopic to an algebraic arrangement arising as a picture deformation. Even if we knew that two arrangements are not isotopic, 
it may still be possible that the corresponding Stein fillings are diffeomorphic (or even biholomorphic). Building on \cite{NPP} and on our previous work~\cite{PS}, we show in Section~\ref{sec:distinguish} that in certain cases, Stein fillings can be distinguished from Milnor fibers by the incidence matrix of the disk arrangement, and construct unexpected Stein fillings not homeomorphic to any of the Milnor fibers for possible smoothings. 

\begin{theorem} \label{thm2-intro}  Many links of sandwiched singularities admit unexpected fillings.
In particular, for any $N>0$ and any sandwiched plumbing graph $G$, there is a sandwiched graph $K$ containing $G$ as a plumbing subgraph, such that the link of a singularity with the dual resolution graph $K$ admits at least 
$N$ pairwise non-homeomorphic unexpected Stein fillings.  
\end{theorem}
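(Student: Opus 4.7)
The plan is to produce, for arbitrary sandwiched $G$ and arbitrary $N$, an extension $K\supset G$ whose link admits $N$ pairwise non-homeomorphic unexpected fillings. By Theorem~\ref{thm1-intro}, minimal symplectic fillings of the contact link of a singularity with graph $K$ are classified by DJVS immersed disk arrangements compatible with the decorated germ $(C_K,0)$ associated to $K$, while Milnor fibers of smoothings correspond, via de Jong--van Straten, to picture deformations of $(C_K,0)$. The strategy is to arrange that many DJVS immersed disk arrangements exist whose incidence matrices cannot be realized by any picture deformation, and whose resulting Stein fillings are topologically distinct from one another.

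First, building on \cite{NPP} and on our earlier work~\cite{PS}, I would identify a small ``gadget'' sandwiched subgraph $T$ with the property that attaching $T$ to a sandwiched graph yields a sandwiched graph and introduces extra curve components in the decorated germ along which algebraic and symplectic realizability diverge. The mechanism, as in Section~\ref{sec:distinguish}, is that picture deformations of a cuspidal germ are constrained by Bezout/semicontinuity inequalities on intersection multiplicities, while DJVS immersed disk arrangements must only satisfy a boundary compatibility condition, so additional incidence patterns become available symplectically. I would then form $K$ by attaching enough copies of $T$ to $G$ (the required number depending on $N$), producing a sandwiched graph that contains $G$ as a plumbing subgraph and whose decorated germ has been enlarged by a controlled combinatorial piece.

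In $K$, I would exhibit $N$ DJVS immersed disk arrangements whose incidence matrices are pairwise distinct in a way detected by an invariant of the associated Stein filling (for instance, the intersection form on $H_2$, computed from the arrangement exactly as in~\cite{PS}) and such that each matrix violates the algebraic constraints above. By Theorem~\ref{thm1-intro}, each arrangement yields a minimal Stein filling; pairwise non-homeomorphism follows from the invariant, and the algebraic obstruction ensures none of them is homeomorphic to any Milnor fiber. The main obstacle will be ruling out the chosen incidence matrices uniformly for every sandwiched singularity with link $Y_K$, not merely for a single analytic type; since the de Jong--van Straten classification depends on $(C_K,0)$ only through its topological decoration, this reduces to a combinatorial problem, but the many ways in which a cusp may split in a picture deformation require a careful case analysis, and producing $N$ genuinely different incidence patterns (rather than symmetric ones that might give diffeomorphic fillings) will require choosing the family of arrangements to vary in an invariant read directly off the arrangement.
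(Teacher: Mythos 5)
Your outline follows the same broad strategy as the paper (enlarge $G$ to a sandwiched $K$ whose germ contains a piece supporting non-realizable arrangements, then use incidence data to rule out Milnor fibers and to distinguish the fillings), but the essential content is missing or deferred. The central gap is that you never actually produce the ``gadget'' $T$ or the $N$ non-realizable incidence matrices: you postpone exactly the hard part (``the many ways in which a cusp may split in a picture deformation require a careful case analysis'') to future work. The paper does not redo any such analysis; it imports the already-established Theorem~\ref{thm:old} from \cite{PS}: for a pencil of $m=2N+5$ lines with sufficiently large weights there exist $N+1$ DJVS arrangements whose incidence matrices are pairwise distinct and not realizable by any picture deformation. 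The graph $K$ is then built by taking the corresponding star-shaped graph $S$, lowering the central self-intersection by one, and joining its central vertex to the vertex $E_0$ of $G$ by an edge, after which one checks directly that the augmented graph blows down, so $C_K$ contains the pencil $C_S$ as a subgerm. Non-realizability of the combined arrangement then follows from the subarrangement statement (Corollary~\ref{non-realizable}(2)), which also disposes of your worry about uniformity over all analytic types: together with Remark~\ref{rmk:top-vs-an}, the obstruction is purely an incidence-matrix statement about picture deformations of any topologically equivalent germ, so no case analysis over analytic structures is needed.

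A second concrete gap is the weight compatibility issue, which your proposal does not address at all. When you take the union of the non-realizable subarrangement on the lines with a deformation (e.g.\ the Scott deformation) of the complementary subgerm coming from $G$, every new intersection point between the two pieces must be marked, and the weights dictated by the graph $R=S\cup G$ are in general too small to allow this. The paper fixes this by replacing the $(-1)$ augmentation vertices by sufficiently long chains of $(-2)$ vertices (Remark~\ref{rmk:-2legs}), which leaves the curve germ unchanged, raises all weights above the number of marked points needed, and still contains $G$ as a plumbing subgraph; this step is what actually produces the graph $K$ in the statement. Finally, for distinguishing the fillings you propose the intersection form on $H_2$, but the argument the paper (and \cite{PS}) actually runs is via Proposition~\ref{incidence-matrix}: homeomorphic fillings (strongly, or absolutely when the graph has no relevant symmetries) have equal incidence matrices, so the distinct incidence matrices of the line subarrangements already give pairwise non-homeomorphic fillings; if you want to use the intersection form instead, you would need to verify separately that it distinguishes these particular arrangements, which is not done in your proposal.
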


Unexpected fillings uncover the difference between the algebraic and symplectic settings. These are Stein domains 
that look quite similar to but still different from the ``standard'' fillings provided by Milnor fibers.  
As such, these constructions may have potential applications in 4-manifold topology; one could try to use unexpected fillings to produce interesting closed 4-manifolds. For example, in our next paper~\cite{BPS}
joint with M\'arton Beke, we were able to construct rational homology disk Stein fillings for certain links of singularities where rational homology disk smoothings do not exist.  
It would  be interesting to find other ways to construct further unexpected fillings and their further applications. 

{For studying classification questions and explicit constructions, it is important to encode nearly Lefschetz fibrations and multisections in a convenient language that allows us to prove topological theorems via combinatorial manipulations. For 
(nearly) Lefschetz fibrations, this language is given by monodromy factorizations. For DVJS arrangements, we use the language of braided wiring diagrams with vertical tangencies, 
generalizing the diagrams previously studied in the context of 
plane algebraic curves and line arrangements, \cite{Arvola, CS}.
A practical application is given by our constructions in~\cite{BPS}, based on the diagrammatic calculus for braided wiring diagrams that allows us to manipulate Stein fillings without changing the contact boundary. In Section~\ref{sec:diagrams}, we introduce the setup of braided wiring diagrams and give an algorithm for translating a diagram into the monodromy factorization of the corresponding nearly Lefschetz fibration, and vice versa.} 

The paper is organized as follows. Section~\ref{sandwich-setup} contains the main definitions and statements related to sandwiched singularities and the de Jong--van Straten theory. We define DJVS immersed disk arrangements and state a precise version of Theorem~\ref{thm1-intro}. 
{Section~\ref{sec:spinal} discusses spinal open books and nearly Lefschetz fibrations, explains how they arise in our setting, and uses the 
results of~\cite{HRW} to prove Theorem~\ref{thm1-intro}.
A significant part of the argument involves checking that certain spinal open books are compatible with the canonical contact structure on the links of the singularities that we study.  
In Section~\ref{sec:diagrams}, we develop a dictionary for translating between DJVS disk arrangements and monodromy factorizations of nearly Lefschetz fibrations.} Section~\ref{sec:distinguish} explains how to distinguish the fillings using the incidence data; we summarize our previous results on unexpected fillings, prove Theorem~\ref{thm2-intro}, and include a discussion of deformation (non)realizability of arrangements.   We have made an effort to include sufficient background and details to make the paper reasonably self-contained.

{\bf Acknowledgements:} We are grateful to M\'arton Beke, Andras N\'emethi, and Mark Powell
for helpful conversations, and especially to Agniva Roy and Luya Wang for explaining spinal open books to us. We thank Stepan Orevkov for very helpful discussions of deformation realizability of curve arrangements. 
Some of the initial discussions motivating this work took place at the conference ``New structures in low-dimensional topology'' in Budapest, July 2024 and at the workshop ``What's your trick?'' in Banff, August 2024. The authors discussed the final stages of this project during the Georgia Topology Conference in May 2025. OP has been partially supported by the NSF grant DMS 2304080. 
LS has been partially supported by NSF CAREER grant DMS 2042345 and Sloan grant FG 2021-16254.

\section{Sandwiched singularities, de Jong--van Straten's theory, and smooth disk arrangements} 
\label{sandwich-setup}

\subsection{Decorated germs and picture deformations} 

Let $(X, 0)$ be a normal surface singularity. For a resolution $\pi: \ti{X} \to X$, the {\em exceptional divisor} $\pi^{-1} (0)$ is the inverse image of the singular point. Performing additional blow-ups if necessary, we can assume that 
the exceptional divisor $\pi^{-1} (0)$ has normal crossings. This means that $\pi^{-1} (0)= \cup_{v \in G} E_v$, where
the irreducible components $E_v$ are smooth complex curves that intersect transversally at double points only. 
A resolution with this property is  called a {\em good} resolution; 
a minimal good resolution is unique. 
(Minimality means that $\ti X$ contains no no embedded smooth complex curves of genus 0 and self-intersection $-1$ such that the resolution would be still be good after blowing down the curve.)
The topology of a {good} resolution is encoded by the dual resolution graph $G$.  The vertices  $v\in G$ correspond to the exceptional curves $E_v$, and the edges correspond to intersections of the irreducible components. In this paper, all singularities will be rational; then all exceptional curves have genus 0, and $G$ is a tree. 

Sandwiched singularities can be defined by a combinatorial hypothesis on the dual resolution graph. (For simplicity, we always work with the minimal good resolution.) The graph $G$ is {\em sandwiched}  if it can be augmented by adding new valency one vertices with self-intersection $(-1)$  in a way that the resulting augmented graph $G'$ can be blown to a smooth point, see Figure~\ref{fig:examplesandwich} for an example. (The choice of such augmentation is not generally unique.) In other words, for a sandwiched singularity 
$(X, 0)$ there exists an embedding of the tubular neighborhood of the exceptional set of the resolution $\tilde{X}$ into some blow-up of $\C^2$.  The $(-1)$ vertices correspond to a distinguished collection of $(-1)$ curves in this space,  so that the configuration of these $(-1)$ curves 
together with the original exceptional set can be completely blown down. It is not hard to show that a minimal good sandwiched graph $G$ cannot have any $(-1)$ vertices. It follows that the minimal good resolution is the same as the minimal resolution in the sandwiched case, and also that all $(-1)$ vertices in the augmented graph $G'$ come from the augmentation.  

\begin{figure}
	\centering
	\includegraphics[scale=.5]{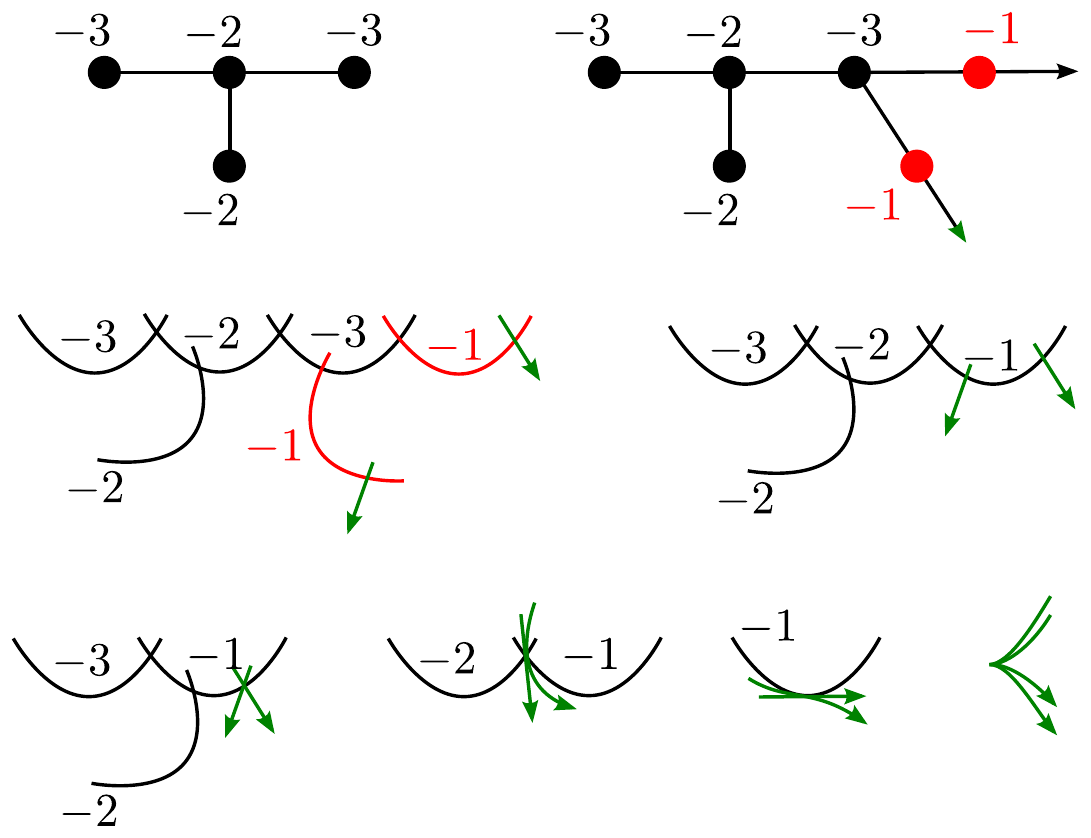}
	\caption{A resolution graph of a sandwiched singularity together with an augmentation with $-1$ curves, decorated by transverse disks. Below is the sequence of blow-downs and the image of the disks under these blow-downs. In this case, the resulting curve configuration consists of two simple cusps intersecting each other with multiplicity $7$. By tracking the multiplicities of the exceptional spheres at each stage, one can see that the weight $w_i$ of each component is $6$.}
	\label{fig:examplesandwich}
\end{figure}

For a sandwiched $(X, 0)$, the embedding of its minimal (good) resolution together with the additional $(-1)$ curves into a blow-up of $\C^2$  can be encoded by a (generally reducible) singular plane curve germ, as follows. 
The decorated germ is topologically unique for each fixed choice of the augmented graph $G'$, but it depends on this choice.
For each  distinguished
$(-1)$ curve, fix  a transverse complex disk $\tilde{C}_i$ through a generic point. These disks, as well as their images under blow-downs, are called {\em curvettas} in \cite{DJVS}.   When we contract the the curve configuration
corresponding to the augmented graph $G'$, the union of these disks becomes a germ of a curve in $\C^2$, with 
irreducible components $C_i$ given by the images of the individual disks $\ti{C}_i$ under the blow-downs. Since each $\ti{C}_i$ intersects some exceptional curve, each component $C_i$ passes through $0$. We consider 
the germ of $C= \cup C_i$ at 0. The germ comes with additional data: we
define an integer weight $w_i = w(C_i)$ for each $C_i$. The weight $w_i$ is the sum of the intersection numbers $w_{i,j}$ of the images of $\ti{C}_i$ with 
the $(-1)$ curves as we go through the blow-down process for $G'$. At each stage, if we blow down the $(-1)$ curve $E_j$, $w_{i,j}$ is the intersection multiplicity of $E_j$ and the image of  $\ti{C}_i$ at this stage. Equivalently, after $E_j$ is blown down to a point $q_j$, $w_{i,j}$ is the multiplicity of the subsequent image of $\ti{C}_i$  at $q_j$. 
In the special case where the component $C_i$ is smooth after the final blow-down, each intersection multiplicity $w_{i,j}$ is $0$ or $1$, and $w_i$ is the number of blow-downs that the corresponding component goes through. Let $w$ denote the collection of weights $w_i$. Then $(C, w)$ is the {\em decorated germ} 
encoding the singularity $(X, 0)$. See Figure~\ref{fig:examplesandwich} for an example.

\begin{remark}\label{rmk:-2legs} The following observation illuminates the role of the decoration by weights. Given a graph $G$ and its augmentation $G'$, consider a new graph $K$ where each $(-1)$ vertex of $G'$ is replaced by a linear chain of $(-2)$ vertices, see Figure~\ref{fig:-2legs}.  The graph $K$ is sandwiched; an augmented graph $K'$ that fully blows down can be obtained by adding a $(-1)$ vertex at the end of each $(-2)$ chain. (For the analytic data defining the singularity, consider the configuration of curves associated to $K'$ as a further blowup on $G'$.)   
The resulting plane curve germ $C_K$ is then identical to the original germ $C_G$, but the weights for $C_K$ are higher than those for $C_G$: each weight increases by the length of the corresponding $(-2)$ chain. 
\end{remark}

\begin{figure}
	\centering
	\includegraphics[scale=.4]{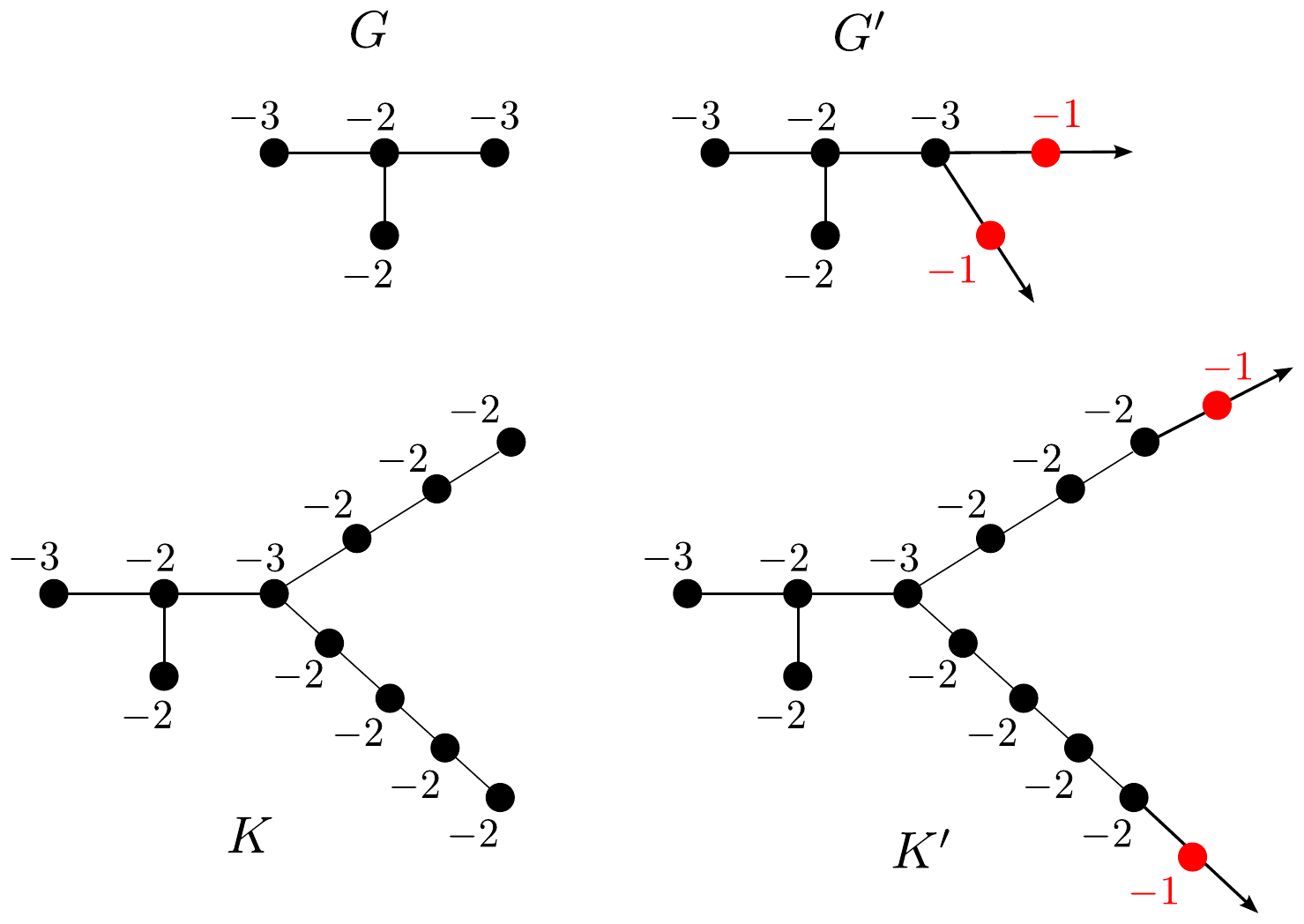}
	\caption{Top: A resolution graph $G$ of a sandwiched singularity together with an augmentation. Bottom: A larger sandwiched resolution graph $K$ with an augmentation whose (non-decorated) plane curve germ is the same as $C_G$,  but the weights are higher. As in Figure~\ref{fig:examplesandwich}, the weights associated to the curve components of $C_G$ with the chosen augmentation $G'$ are both $8$. By adding the additional legs of $-2$ spheres, the weights of the two curve components $C_K$ (with the indicated augmentation $K'$) are $11$ and $12$.}
	\label{fig:-2legs}
\end{figure}

It will be useful to label the vertices of $G$ adjacent to the additional $(-1)$ curves and the disks $\ti{C}_i$.

\begin{notation} \label{def:EC} 
Let $E(C_i)$ denote the vertex in $G$  such that the disk $\ti{C}_i$ intersects a $(-1)$ curve attached to $E(C_i)$ in the augmented graph $G'$. For brevity, we will say that the curvetta $C_i$ sits on $E(C_i)$. 
Let $E_0$ be the last surviving vertex of $G$ under the blow-down sequence for the graph $G'$. 
\end{notation}

The original singularity $(X, 0)$ can be reconstructed from $(C, w)$ via a sequence of blow-ups at 0 and infinitely near points: we recover the augmented graph $G'$ and the embedding of the tubular neighborhood of the exceptional set of the resolution given by $G$ into some blow-up of $\C^2$. The disks $\ti{C}_i$ are the strict transforms of the curvettas $C_i$.  The vertex $E_0 \in G$  corresponds to the exceptional sphere of the first blow-up of the sequence.

De Jong--van Straten's theory describes deformations of a sandwiched surface singularity via certain deformations of its decorated germ. We only state the main result of \cite{DJVS} for smoothings.

\begin{definition} \label{def:picturedef} 

A {\em picture deformation} of a decorated germ $C= \cup_i C_i$ with weights $w_i$  is a 1-parameter small analytic deformation $C^t$ equipped with a collection $p$ of marked points (for each $t$) such that

(i)    $C^t$ is a $\delta$-constant deformation,

(ii)   the only singularities of the curve $C^t$ are transverse multiple points for all $t>0$,

(iii) the marked points $p_j$ are chosen on $C^t$, so that each intersection between two irredicible components is marked; additionally, there may be free marked points on each
$C_i$ away from the intersections,

{(iv) the sum of multiplicities of the marked points on each component $C_i^t$ 
equals $w_i$.}

{More precisely, the collection of marked points $p$ should be thought of as deformation of the corresponding scheme originally concentrated at 0, see\cite{DJVS}, but this will not be important for our purposes.}
\end{definition}

Note that every decorated germ admits at least one picture deformation, see section~\ref{ss:Scott} which describes the Scott deformation.

\begin{remark}
	Note that a small analytic deformation $C^t$ means that for all $t\neq 0$, the curves $C^t$ are equisingular. This is different than a smooth 1-parameter family of curves $C^t$ which might change their singularity types generically at isolated times. As discussed in~\cite[Section 8]{PS}, this key difference between smooth 1-parameter families (which can change the singularity types multiple times) and deformations (which keep the same singularity types immediately after deforming the germ) seems to be a primary contributor to differences between the algebraic deformation theory and symplectic fillings.
\end{remark}

To see the Milnor fibers, we fix a good representative of the curve germ in a Milnor ball $B$; our construction will be carried out in this fixed ball.
Because the deformation is  $\delta$-constant, it follows from the existence of simultaneous normalization \cite{Tes} that the deformation preserves the branches, and each $C^t_i$ is an immersed disk in $B$. {We can write $C^t_i=n^t_i(D)$, where $D$ is the complex disk, and $n^t_i: D \to B$ are the normalization maps.}

  \begin{theorem} \cite[Theorem 4.4, Lemma 4.7]{DJVS} Let $(X, 0)$ be a sandwiched singularity and fix a decorated germ $(C, w)$ encoding $(X, 0)$.   Every smoothing of $(X, 0)$ arises from one of the picture deformations of $(C, w)$. The Milnor fiber of a smoothing can be reconstructed from the picture deformation as follows: blow up $B$ at all the marked points of the deformed configuration $C^t$ and take the complement of the strict transforms $\tilde{C}^t_i$ of the components $C_i$ in $B \#_n \cptwobar$.
   \end{theorem}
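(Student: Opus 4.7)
The plan is to use the sandwich structure to translate the deformation problem for the surface singularity into a deformation problem for the decorated plane curve germ. Since $(X,0)$ is obtained from a blow-up $Z$ of $\C^2$ by contracting a rational curve configuration, and the $(-1)$ curves of the augmentation $G'$ meet the curvettas $\ti C_i$ transversely, a deformation of $(X,0)$ should arise from simultaneously deforming the pair $(Z,\bigcup \ti C_i)$ or equivalently, after blowing down, from a flat deformation of the decorated germ $(C,w)$. The first and most substantial step is to make this precise: define the deformation functor of $(C,w)$ as flat deformations of $C$ together with a compatible deformation of the scheme structure at $0$ encoded by the weights $w_i$, and exhibit a natural isomorphism to the deformation functor of $(X,0)$. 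Functoriality of the correspondence comes from the explicit iterated blow-up reconstruction of $X$ from $(C,w)$ sketched in Notation~\ref{def:EC}.

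Given this correspondence, I would classify the smoothings within this framework. A smoothing has smooth total space, which forces the scheme structure supported at $0$ to spread into a finite collection of marked points with prescribed multiplicities, and simultaneously forces the only singularities of $C^t$ for $t\neq 0$ to be transverse multiple points --- otherwise the total space would inherit a nontrivial singularity along the corresponding locus. The $\delta$-constancy is then a consequence: comparing geometric genera along the family and using that the Milnor fiber is smooth and connected shows that no $\delta$-invariant is lost. By Teissier's simultaneous normalization theorem in the $\delta$-constant case, each branch $C_i^t$ deforms as an immersed disk, and the weight condition $\sum_j \operatorname{mult}_{p_j}(C_i^t) = w_i$ is the precise limit of the multiplicity condition defining the decoration at $t=0$. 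This gives the bijection between smoothings and picture deformations in the sense of Definition~\ref{def:picturedef}.

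For the reconstruction of the Milnor fiber, fix a good representative of $C^t$ in a Milnor ball $B$. Blow up $B$ at each marked point $p_j$; since the only singularities of $C^t$ are transverse multiple points, a single blow-up at each $p_j$ separates all local branches, so the strict transforms $\ti C_i^t$ become smooth, properly embedded, disjoint disks in $B \#_n \cptwobar$. The complement $B\#_n\cptwobar \setminus \bigcup_i \ti C_i^t$ is the claimed $4$-manifold. To verify that it is indeed the Milnor fiber, trace through the embedded deformation on the surface side: the exceptional divisors created at the $p_j$ play the role of the augmentation $(-1)$ curves in $G'$, and contracting the configuration corresponding to $G$ recovers the smoothing family of $X$ whose general fiber matches the complement just described.

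The main obstacle is the first step: establishing the equivalence between the deformation functors of $(X,0)$ and $(C,w)$. The subtlety is that the weights $w_i$ do not merely label components but encode a genuine scheme structure concentrated at $0$, so one must set up precisely the right moduli problem on the curve side --- deformations of the pair (curve, multiplicity scheme) --- in order to match the Artin component of the versal deformation of $(X,0)$. Proving that flatness is preserved under the blow-up and blow-down operations relating these two deformation problems, and that the correspondence is an isomorphism of deformation functors rather than merely a map, is the technical heart of the argument.
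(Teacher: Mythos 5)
The statement you are proving is not proved in this paper at all: it is quoted verbatim from de Jong--van Straten \cite{DJVS} (their Theorem 4.4 and Lemma 4.7), so there is no in-paper argument to compare against, and your proposal is in effect an outline for redoing their work. As a roadmap it does follow the genuine strategy (relate deformations of $(X,0)$ to deformations of the decorated germ, then read off smoothings as picture deformations and reconstruct the Milnor fiber by blowing up the marked points), but as a proof it has concrete gaps. First, the ``natural isomorphism of deformation functors'' you posit as Step 1 is stronger than what is true and than what de Jong--van Straten establish: their Theorem 4.4 asserts that the comparison map $\mathrm{Def}(C,w)\to\mathrm{Def}(X,0)$ is \emph{smooth} (hence surjective on components), not an isomorphism, and setting up the source functor correctly -- deformations of the parametrization/normalization together with the weight scheme, which builds $\delta$-constancy into the definition rather than deriving it -- is exactly the technical heart that your sketch names but does not supply. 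Second, your claim that smoothness of the total space ``forces'' the general fiber $C^t$ to have only transverse multiple points, and forces $\delta$-constancy via a geometric-genus comparison, is not justified: an arbitrary smoothing need not come with a curve deformation having only ordinary multiple points; rather one must show that within the corresponding component a generic one-parameter deformation can be chosen with this property and with the marked-point scheme of total multiplicity $w_i$ on each branch (this is the content of their genericity argument and Lemma 4.7), after which Teissier's simultaneous normalization \cite{Tes} applies.

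Third, the reconstruction of the Milnor fiber is asserted by ``tracing through the embedded deformation on the surface side,'' but this is where the actual work lies: one needs the explicit description of the sandwiched surface $X(C,w)$ as a modification of the ambient space determined by the complete ideal encoded by the weights, and an analysis of how this modification varies in the family, to conclude that the general fiber of the smoothing is precisely $[B\#_n\cptwobar]\setminus\cup_i\nu(\tilde C^t_i)$. Without that identification the final paragraph is a restatement of the conclusion, not a proof. In short, your outline is directionally consistent with \cite{DJVS} but replaces their two hardest inputs (smoothness of the comparison map for the correct curve-side functor, and the blow-up description of the Milnor fiber in the family) with assertions, and its one precise structural claim (functor isomorphism) is not the correct statement.
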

 Since all intersections are transverse, and we blow up at the intersection points, the strict transforms of the components $C_i$ are disjoint smooth curves. We will be working with the Milnor fiber as a compact domain with boundary, 
 \begin{equation} \label{eq:Milnorfiber}
 W = [B  \#_n \cptwobar] \setminus \cup_i \nu(\tilde{C}^t_i),
 \end{equation}
 where $B$  stands for a closed Milnor ball.

 We fix a choice of coordinates $(x, y)$ on $\C^2$ such that that none of the tangent cones of the branches $C_i$ of the decorated germ are vertical. 
 We will think of the Milnor ball as the product $B = D_x \times D_y$, with corners smoothed. We can assume that the projection $C_i \to D_x$ is a branched covering whose degree is equal to the multiplicity of $C_i$ at $0$, with a single branch point at the origin. Further, with a generic choice of coordinates we can also assume that for every picture deformation $C^t$ we consider, all vertical tangencies of the projections $\pi_x: C^t_i \to D_x$ are non-degenerate, that is, outside of self-intersections, each projection is a branched covering with simple branch points only. {If we consider the normalization maps as above, then $\pi_x \circ n^t_i: D \to D_x$ is a simple branched covering for each $i$.}

 \subsection{DJVS disk arrangements}
 
 This paper continues our study of sympectic generalizations of the de Jong--van Straten theory. As in \cite{PS}, we will be working with more general arrangements instead of picture deformations.  As described above, with the appropriate choice of coordinates where each irreducible component $C_i^t$ 
is a simple branched covering of $D_x$, the curve arrangement $C_t$ for $t\neq 0$ from a picture deformation is a special case of a DJVS immersed disk arrangement.

\begin{definition} \label{def:DJVSarrangement}
Let 
$(\Gamma, p)$ be an arrangement of immersed smooth disks  
$\Gamma =\cup \Gamma_i$ in $D_x \times D_y$, {with  $\Gamma_i= n_i(D)$ for an immersion 
$n_i: D \to D_x \times D_y$. Let $p=\{p_j\} \subset \Gamma$ be a finite collection of marked points.}

We will say that $(\Gamma, p)$ is 
a {\em DJVS immersed disk arrangement} if it satisfies the following properties:

{(i) $\pi_x \circ n_i: D\to D_x$ is a simple branched covering for each $i$;}

(ii) All intersections $\Gamma_i \cap \Gamma_k$ and self-intersections of 
the components $\Gamma_i$ are positive transverse multiple points locally modelled on the intersection of complex lines.

(iii) All intersections are marked, and there can be additional free marked points.

(iv) Each disk $\Gamma_i$ intersects the boundary of $D_x \times D_y$ transversely, $\Gamma_i\cap \partial(D_x \times D_y) \subset \partial D_x \times \Int D_y$, all marked points and branch points are contained in the interior of $D_x \times D_y$ and their projections to $D_x$ are distinct.

The {\em weight} $w(\Gamma_i)$ of the component $\Gamma_i$ is the sum of multiplicities of the points $p_j$ on $\Gamma_i$.

We will always assume that $\Gamma_i$ is oriented compatibly with the orientation of $D_x \subset \C$.  
 \end{definition}

\begin{definition} We say that a DJVS arrangement $(\Gamma, p)$ has the same weights and boundary data as a decorated germ $(C, w)$, or simply that  \emph{$(\Gamma, p)$ is compatible with $(C, w)$}, if there is a bijection between the components of $C$ and $\Gamma$,  $\d \Gamma_i= \d C_i$, and $w(\Gamma_i)= w_i$.  
 \end{definition}

 Note that if $(C^t, p)$ is a picture deformation of $(C, w)$, then, strictly speaking, the boundary of $C^t$ is different from (but isotopic to) the boundary of the singular curve $C$. We always assume that the components of the picture deformation are moved by a small isotopy supported near the boundary (away from all the intersection points), so that  
 $(C^t, p)$ is compatible with $(C, w)$.
 
Together with Milnor fibers constructed as in \eqref{eq:Milnorfiber}, we will consider 4-manifolds built from DJVS arrangements in a similar way.  If $(\Gamma, p)$ is a DJVS immersed disk arrangement, let
\begin{equation} \label{eq:DJVSfilling}
W_{(\Gamma,p)} = [(D_x \times D_y)  \#_n \cptwobar] \setminus \cup_i \nu(\tilde{\Gamma}_i),
\end{equation}
be the complement of the union of small tubular neighborhoods of the (disjoint) strict transforms $\tilde{\Gamma}_i$ of the  disks $\Gamma_i$ in the blow-up of $D_x \times D_y$ at all the marked points $p_j$, $j=1, \dots, n$. Here, the strict transforms $\ti{\Gamma}_i$  
can be locally modelled on the algebro-geometric strict transforms due to hypothesis~(ii). We will sometimes say that 
$W_{(\Gamma, p)}$ is obtained from $(\Gamma, p)$ by {\em the DJVS construction}. 

The following theorem explains the significance of DJVS arrangements and gives a more precise statement of Theorem~\ref{thm1-intro}.

\begin{theorem} \label{thm:maintechnical} Let $(X, 0)$ be a sandwiched singularity with the germ $(C, w)$ and the contact link $(Y, \xi)$.

(1) If $(\Gamma, p)$ has the same boundary data and the same weights as 
a decorated germ $(C, w)$, then $W_{(\Gamma, p)}$ is a Stein filling of the link $(Y, \xi)$.

(2) Every minimal symplectic filling of $(Y, \xi)$ is symplectic deformation equivalent to $W_{(\Gamma, p)}$ for some DJVS arrangement $(\Gamma, p)$ compatible with $(C, w)$.
\end{theorem}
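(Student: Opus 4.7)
The plan is to prove both statements via the dictionary between DJVS immersed disk arrangements and nearly Lefschetz fibrations announced in the introduction, which will be set up in Sections~\ref{sec:spinal} and~\ref{sec:diagrams}. For part (1), the starting point is that the projection $\pi_x : D_x \times D_y \to D_x$, pulled back to the blow-up $(D_x \times D_y) \#_n \cptwobar$ at the marked points of $(\Gamma,p)$ and restricted to the complement of the strict transforms $\tilde\Gamma_i$, produces a nearly Lefschetz fibration on $W_{(\Gamma,p)}$ over $D_x$. The regular fiber is a disk $D_y$ with a finite collection of open subdisks removed, one for each point of $\Gamma \cap \pi_x^{-1}(x)$; the Lefschetz critical points come from vertical tangencies of the smooth branches of $\Gamma$, while the additional ``nearly Lefschetz'' singular fibers come from branch points of $\pi_x|_{\Gamma_i}$ and from the multiple points $p_j$, where boundary punctures of the page collide or split according to multiplicity.

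The induced planar spinal open book on $\partial W_{(\Gamma,p)}$ depends only on the combinatorics of $\Gamma$ transverse to $\pi_x^{-1}(\partial D_x)$ and on the vertical tangency/branching pattern near the boundary. Since $(\Gamma,p)$ is compatible with $(C,w)$, this boundary data is identical to the data obtained from an actual picture deformation of $(C,w)$; and for a picture deformation, de Jong--van Straten's theorem identifies $W_{(C^t,p)}$ as a Milnor fiber of $(X,0)$, whose boundary carries the canonical contact structure $\xi$ on $Y$. Comparing the two spinal open books thus identifies $\partial W_{(\Gamma,p)}$ with $(Y,\xi)$. Steinness of $W_{(\Gamma,p)}$ is then a standard consequence of the fact that a Lefschetz (or nearly Lefschetz) fibration over the disk with planar fiber and positive vanishing cycles admits a compatible Stein structure.

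For part (2), the strategy is to reverse this dictionary. Section~\ref{sec:spinal} will show that $(Y,\xi)$ is supported by a planar spinal open book whose binding and monodromy are read off from $(C,w)$ using the sandwiched blow-down sequence for $G'$. Given any minimal weak symplectic filling $W$ of $(Y,\xi)$, the Min--Roy--Wang generalization of Wendl's theorem endows $W$ with a nearly Lefschetz fibration over $D_x$ whose boundary spinal open book agrees with that of $(Y,\xi)$. Section~\ref{sec:diagrams} will then translate this fibration back into a DJVS immersed disk arrangement: each boundary puncture of the page sweeps out a circle on $\partial D_x \times \partial D_y$ that bounds an immersed disk $\Gamma_i$ in $D_x \times D_y$, the nearly Lefschetz fibers encode the branch points of $\pi_x|_{\Gamma_i}$, and the multiple points $p_j$ record precisely where boundary punctures merge. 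The weights $w(\Gamma_i)$ read off from the merging data will match the weights $w_i$ prescribed by $(C,w)$ because the monodromy of the spinal open book already constrains the total multiplicity of merges per boundary component.

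The main obstacle is constructing the dictionary rigorously in the reverse direction: given an abstract nearly Lefschetz fibration on $W$, one must realize its multisections as honest immersed smooth disks in $D_x \times D_y$ with only positive transverse multiple points, with $\pi_x|_{\Gamma_i}$ a simple branched covering, and with the correct boundary embedding. One has to check that merging patterns of boundary punctures in the nearly Lefschetz model can indeed be represented by immersed disks meeting transversely, and that the resulting weights agree with those dictated by the blow-down sequence for $G'$ — this is where Remark~\ref{rmk:-2legs} and the precise relationship between $(-1)$-curves and curvetta intersection multiplicities play a key role. A secondary technical point is that both the nearly Lefschetz fibration and the arrangement are only determined up to a suitable notion of equivalence, so the conclusion will naturally be phrased up to symplectic deformation equivalence rather than strict equality.
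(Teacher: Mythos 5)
Your overall architecture matches the paper's (forward dictionary for (1), Min--Roy--Wang plus a reverse dictionary for (2)), but as written the proposal has genuine gaps and one structural confusion. First, the mechanism producing the singular fibers is garbled: in the actual construction (Theorem~\ref{thm:smooth-nearlyLefschetz}) the marked points $p_j$ give honest \emph{Lefschetz} singular fibers — one blows up at each $p_j$, the exceptional sphere sits in the singular fiber, and the vanishing cycle encloses exactly the holes of the disks meeting at $p_j$ — while the exotic (nearly Lefschetz) fibers come only from the branch points (vertical tangencies) of $\pi_x|_{\Gamma_i}$. You assign Lefschetz critical points to vertical tangencies and exotic fibers to both branch points and the $p_j$, and you speak of boundary punctures ``colliding'' at the multiple points; this misses the essential role of the blow-up, which separates the strict transforms so that nothing collides in $W_{(\Gamma,p)}$ and the fibration is allowable. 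Second, for part (1) the step ``comparing the two spinal open books thus identifies $\partial W_{(\Gamma,p)}$ with $(Y,\xi)$'' is precisely where the work lies, and knowing that a picture deformation reproduces the Milnor fiber does not supply it: one must show that the spinal open book induced by the fibration supports $\xi_{can}$. The paper does this in three steps you do not address: a Milnor-type spinal open book on the link coming from the holomorphic function obtained by pushing the projection $x$ down through the blow-downs, including the non-reduced case where binding components have multiplicity $>1$ (Theorem~\ref{MilnorOB}, Lemma~\ref{binding-from-function}); an identification, via the cap construction and Kirby moves, of the binding of the fibration-induced open book with fibers of the plumbing structure (Lemma~\ref{binding-from-fibration}); and the fact that on a rational homology sphere a spinal open book is determined up to isotopy by its binding with multiplicities (Lemma~\ref{OBisotopy}).

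Third, for part (2) you correctly name the main obstacle — turning an abstract nearly Lefschetz fibration back into an immersed disk arrangement with the right boundary, branch, and marking data — but you do not propose a mechanism to overcome it, so the proposal remains a plan rather than a proof at exactly the point where the paper's argument is nontrivial. The paper's solution is the braided wiring diagram machinery extended to tangencies: from the ordered list of vanishing cycles and arcs one inductively builds a wiring diagram by choosing diffeomorphisms of the planar page making each vanishing cycle convex (or each vanishing arc straight), realizes any such diagram by a symplectic immersed disk arrangement (Lemma~\ref{lem:wiringtocurves}), and verifies that the fibration produced from that arrangement has the prescribed vanishing cycles and arcs (Lemma~\ref{lem:vanishingfromwiring}); compatibility of the boundary and weights then comes from the fixed spinal open book, and the identification is up to symplectic deformation equivalence as you anticipated. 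Without this construction (or an equivalent replacement), neither the existence of the arrangement nor the matching of its weights with $(C,w)$ is established.
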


The proof will be given in the next section.

\begin{remark} \label{rmk:top-vs-an}
We need to point out an important subtle point regarding analytic vs.~topological data. 
For a normal surface singularity $(X, 0)$, its  minimal normal crossing resolution of $(X, 0)$ gives a smooth 4-manifold $\tilde{X}$ which is the plumbing of disk bundles according to the (negative definite) dual resolution graph $G$.  The link $Y = \d \tilde{X}$ is the boundary of this plumbing. By~\cite{Neu}, the links of two normal surface singularities have the same oriented diffeomorphism type if and only if their minimal good resolutions have the same dual graphs.  
The local topological type of the singularity $(X, 0)$ can be understood from its link $Y$, as a cone on the corresponding 3-manifold. Two singularities are {\em topologically equivalent} if they have the same link. It is important to note that the analytic type of the singularity is not uniquely determined by the link; typically, infinitely many analytically different singularities have diffeomorphic links.

It turns out that the canonical contact structures are all isomorphic for different surface singularities of the same topological type~\cite{CNPP}; thus, the dual resolution graph encodes the canonical contact structure. Indeed, 
this contact structure can be recovered as the convex boundary of the plumbing, according to the graph, of the standard neighborhoods of the corresponding symplectic surfaces~\cite{ParkStipsicz}. It follows that Milnor fibers of smoothings of all analytic singularities  with the same link $(Y, \xi)$ provide Stein fillings of the link.  This implies that if we wish to study Stein fillings arising as Milnor fibers, we need to consider all possible analytic singularities of the given topological type. We say that a Stein filling of $(Y, \xi)$ is {\em unexpected} if it is not diffeomorphic to any of the Milnor fibers of {\em any} of these singularities.  (For a general normal surface singularity, an additional Stein filling is provided by a deformation of the minimal resolution \cite{BO}, but since for rational singularities the minimal resolution  is diffeomorphic to the Milnor fiber of the Artin smoothing, we do not need to consider the resolution separately.) 
 
 A decorated germ $(C, w)$ encodes the analytic type of a sandwiched singularity $(X, 0)$. However, if $(X', 0)$ is topologically equivalent to $(X, 0)$, then it has the same dual resolution graph. The same combinatorial choice of the sandwiched augmentation  produces a decorated germ $(C', w)$ where the curve germ $C'$  is topologically equivalent to $C$, and the weights of the two germs are the same. Since the topological type of a singular plane curve germ is determined by its boundary link, it follows that any DJVS arrangement that is compatible with $(C, w)$ is also compatible with $(C', w)$ after an isotopy of its boundary.  
 \end{remark}

\subsection{The cap construction} Several arguments will require a particular {\em cap} $U$ of a filling  $W=W_{(\Gamma, p)}$ as in \eqref{eq:DJVSfilling}, such that $W \cup U$ is a blow-up of a 4-sphere, \cite{NPP, PS}.  We fix a sandwiched singularity with a 
decorated germ $(C, w)$ and consider DVJS arrangements with the same boundary data and weights as this germ. 
The cap $U$ will be determined by $(C, w)$ and thus will be the same for all fillings 
$W_{(\Gamma, p)}$ such that 
$(\Gamma, p)$ is compatible with $(C, w)$.
To construct $U$, let $B \subset \C^2$ be a closed Milnor ball as above. We assume that 
$B$ contains both the branches of the germ $\mathcal{C}$ and the arrangement $\Gamma$.  Let $(B', \mathcal{C'})$ be another copy of this ball with 
the  germ $\mathcal{C}$ inside, {with reversed orientation.}
We can glue 
$(B, \Gamma)$ and $({B}', {\mathcal{C}}')$ so that the boundary of $\Gamma_i$ is glued to the boundary of the corresponding germ branch ${C}'_i$. With our choice of orientations, the result of gluing 
is a smooth 4-sphere $B \cup {B}'$ containing the 2-spheres $\Sigma_i=\Gamma_i \cup {C}_i'$. The sphere $\Sigma_i=\Gamma_i \cup {C}_i'$ is smoothly immersed with transverse self-intersections on the $\Gamma_i$ side and may have a singular point at the origin in $B'$ if 
${C}_i'$ has one. Blowing up at the points $p_1, \dots, p_n\in B$, we get
$\#_{i=1}^n \cptwobar$, represented as the blow-up $\tilde{B}$ of the ball $B$
glued to ${B}'$. The strict transforms 
$\tilde{\Sigma}_i= \tilde{\Gamma}_i \cup C'_i$  are topologically embedded spheres that are smoothly embedded, except possibly at the origin in $B'$. 
Let $T_i$ be a thin tubular neighborhood of the strict transform of $\tilde{\Gamma}_i$ in $\tilde{B}$. By construction,  
$W_{(\Gamma, p)} = \tilde B \setminus \cup_{i=1}^m T_i$. Setting $U = {B}' \cup_{i=1}^m T_i$, 
we have $U \cup W = \#_{i=1}^n \cptwobar$.

As in \cite[Lemmas  4.2.1, 4.2.4]{NPP}, we see that the cap $U$ is determined by the weights and the boundary link  
$\cup_i  \d \Gamma_i$ of the arrangement $\Gamma$.  
Indeed, $U$ is the  
4-ball ${B}'$ with 2-handles attached along the boundaries of the $C_i$'s. The framing of the handles is given by the self-intersection of the corresponding surfaces $\tilde{\Sigma}_i$, which is computed in \cite[Lemma  4.2.1]{NPP} to be 
\begin{equation}\label{eq:Sigma-self-int}
\tilde{\Sigma}_i \cdot \tilde{\Sigma}_i = - w_i - 2 \delta_i,
\end{equation}
where $\delta_i= \delta(C_i)$ is the $\delta$-invariant of $C_i$. (The argument in \cite{NPP} is stated for complex curves but uses smooth topology and goes through in our setting because $\tilde{\Sigma}_i$ agrees with the complex curve $C_i$ near the only singular point of $\tilde{\Sigma}_i$.)
It follows that the cap will be the same for all the DJVS arrangements that have the same boundary and the same weights as a fixed decorated germ $(C, w)$.

\subsection{Topology of plumbed 3-manifolds}\label{sec:top-plumbing}
We need a brief detour to discuss plumbed 3-manifolds and their mapping class groups. 
 
For a normal surface singularity $(X, 0)$, its minimal good resolution is, as a smooth 4-manifold with boundary, a plumbing of disk bundles over surfaces (exceptional curves), according to its dual resolution graph. The graph is a tree, and all exceptional curves are spheres if the singularity is rational.
 As the boundary of a plumbing of disk bundles, the link $Y$  of $(X, 0)$ carries a {\em plumbing structure}: a family of pairwise disjoint embedded tori whose complement is fibered by circles, together with the fibration structure on the complement, so that on each torus, the intersection number of the fibers from each side is $\pm 1$.  The tori correspond to the edges of the dual graph; the fibered connected components of their complement correspond to the vertices. We will refer to these connected components as {\em pieces} of~$Y$. 
 
 We can recover the normal Euler numbers of the disk bundles labeling the dual graph from the fibrations in the plumbing structure, as follows. Each piece $P$ is a circle bundle over an orientable surface with boundary $\Sigma$, thus the fibration structure identifies $P$ with the trivial bundle $\Sigma\times S^1\to \Sigma$. For each boundary component $T_j=\partial_j\Sigma\times S^1$, identify a basis for $H_1(T_j)\cong \Z^2$ by setting $\partial_j\Sigma \times \{\theta_0\} = (1,0)$ and $\{p\}\times S^1 = (0,1)$. Each $T_j$ has another fibration structure coming from the piece on the other side of it. Denote the class of the fiber in this other fibration structure by $A_j\in H_1(T_j)$. The plumbing structure definition requires that $A_j\cdot (0,1) = \pm 1$. Since the fibers are not canonically oriented, we may assume by switching the orientation on $A_j$ that this is $+1$ so $A_j = (1,n_j)$ for some $n_j\in \Z$. To recover the normal Euler numbers labeling the dual graph, we want to ''undo'' the plumbing to see the circle bundle over the closed surface. Since plumbing two disk bundles glues the fibers of one bundle to the section of the other and vice versa, to undo the plumbing, we glue to each $T_j$ a solid torus such that its meridian is glued to the class $A_j=(1,n_j)$. 
 From here, we can see that the normal Euler number is $\sum_j -n_j$. For a Dehn surgery description of why this is true, specialize to the case where every disk bundle is over a $2$-sphere for simplicity (which is always true in our setting of rational singularities). The trivial $S^1$ bundle over $S^2$ is obtained from $S^3$ as $0$-framed Dehn surgery on the unknot. Meridians $F_1,\dots, F_r$ of the unknot represent fibers of this trivial bundle. Deleting solid torus neighborhoods $F_1,\dots, F_r$ yields $\Sigma\times S^1$. Gluing a solid torus back in with the meridian of the solid torus identified with the class $(1,n_j)$ (i.e. $1$ time in the section direction which is represented by a meridian of $F_j$ and $n_j$ times in the fiber direction represented by a longitude) corresponds to $1/n_j$ Dehn surgery on $F_j$. Thus, the circle bundle is given by Figure~\ref{fig:DSbundle} which is equivalent by Rolfsen twists/slam dunks to $\sum_j -n_j$ integral surgery on the unknot~\cite[Section 5.3]{GS}, which is the boundary of the disk bundle over the sphere of normal Euler number $\sum_j -n_j$.
 
 \begin{figure}
 	\centering
 	\includegraphics[scale=.5]{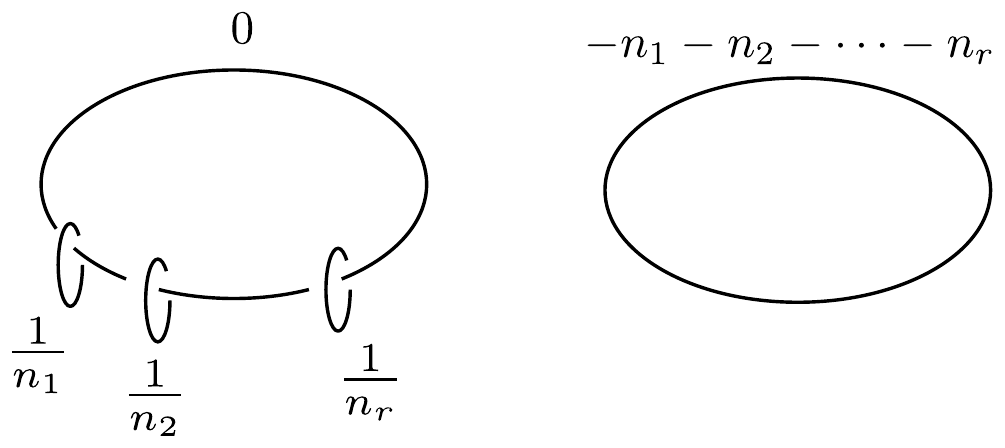}
 	\caption{Dehn surgery diagram to recover normal Euler numbers in resolution graph from plumbing structure fibrations.}
 	\label{fig:DSbundle}
 \end{figure}
  
 The plumbing structure corresponding to the minimal good resolution is determined by the oriented link $Y$; it is invariant, up to isotopy, by all orientation-preserving diffeomorphisms of the boundary. This result was proved by Popescu-Pampu~\cite{PP} building on similar classical facts such as uniqueness of the JSJ decomposition. We briefly review this material since the implications for the mapping class groups will be important to us. Recall

 \begin{theorem}[JSJ decomposition~\cite{JacoShalen,Johannson}]\label{thm:JSJ}
		Given a closed irreducible 3-manifold, there is a unique (up to isotopy) minimal collection of disjoint incompressible tori such that each component of the complement of this collection of tori is either Seifert fibered or atoroidal.
	\end{theorem}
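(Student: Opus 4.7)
The plan is to prove existence first by extracting a maximal collection of incompressible tori, then to establish uniqueness by an innermost-curve argument comparing any two minimal such collections. Both parts are classical and essentially due to Jaco--Shalen and Johannson; I will outline the main steps and flag where the hardest work lies.

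For existence, I would first invoke the Kneser--Haken finiteness theorem, which bounds the cardinality of any pairwise disjoint, pairwise non-parallel collection of incompressible surfaces in an irreducible 3-manifold $M$ in terms of a triangulation. This guarantees that a maximal collection $\mathcal{T}$ of disjoint incompressible, pairwise non-parallel tori exists. One then has to check that each complementary piece $P$ of $\overline{M \setminus \mathcal{T}}$ is either atoroidal or Seifert fibered. If $P$ is atoroidal, there is nothing to show; otherwise $P$ contains an essential torus $T$, and by maximality of $\mathcal{T}$ any such $T$ must be isotopic into $\partial P$. The remaining task is to show that an irreducible, orientable 3-manifold with incompressible torus boundary, all of whose essential tori are boundary-parallel, is Seifert fibered.

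This last step is the main obstacle and is the technical heart of the theorem. The cleanest approach goes through the characteristic submanifold theorem of Jaco--Shalen/Johannson: one shows that the ``characteristic Seifert pair'' inside $P$, built from $I$-bundles and Seifert fibered submanifolds capturing essential annuli and tori, must equal all of $P$ under our hypotheses. A more direct alternative, in the style of Neumann--Swarup, fixes a Seifert structure on a neighborhood of a chosen boundary torus and inductively extends it across $P$ using the hypothesized absence of non-boundary-parallel essential tori. Either way, one needs a careful classification of Seifert fibrations on manifolds with toral boundary, and this is where most of the real work is hidden.

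For uniqueness, take two minimal such collections $\mathcal{T}$ and $\mathcal{T}'$. Put them in general position so that $\mathcal{T} \cap \mathcal{T}'$ is a union of circles, which are essential on both by incompressibility. An innermost-circle argument on $\mathcal{T}'$ combined with irreducibility of $M$ permits an isotopy reducing intersections, and iterating yields disjoint $\mathcal{T}$ and $\mathcal{T}'$. Each torus of $\mathcal{T}'$ then lies in a piece $P$ of the $\mathcal{T}$-decomposition: if $P$ is atoroidal the torus is boundary-parallel and hence matches a torus of $\mathcal{T}$, and if $P$ is Seifert fibered the classification of essential tori as vertical (up to isotopy) together with minimality of both collections forces the same matching. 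The subtle point is handling Seifert fibered pieces, which can carry many essential tori, so minimality must be used carefully to conclude that $\mathcal{T}'$ adds nothing inside such a piece that is not already isotopic to a member of $\mathcal{T}$.
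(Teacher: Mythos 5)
First, a remark on scope: the paper does not prove Theorem~\ref{thm:JSJ} at all; it is quoted as a classical result with citations to Jaco--Shalen and Johannson, and only its consequence (invariance of the decomposition under self-diffeomorphisms) is used, en route to Popescu-Pampu's refinement for plumbing structures. So there is no in-paper proof to compare against, and your proposal has to stand on its own as a sketch of the classical argument. Judged that way, it has two genuine gaps.

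In the existence part, the final step you set up is both unnecessary and false as stated. If $\mathcal{T}$ is a maximal collection of disjoint, pairwise non-parallel incompressible tori and a complementary piece $P$ had an incompressible torus not parallel into $\partial P$, you could enlarge $\mathcal{T}$, contradicting maximality; so every piece has all its incompressible tori boundary-parallel, i.e.\ is atoroidal in the embedded sense, and it then simply falls into the ``atoroidal'' alternative of the theorem. The claim you reduce to --- that an irreducible manifold with incompressible torus boundary whose essential tori are all boundary-parallel must be Seifert fibered --- is wrong (hyperbolic pieces, e.g.\ hyperbolic knot exteriors, satisfy the hypothesis and are not Seifert fibered). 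The Seifert alternative genuinely enters only if one uses a stronger (algebraic) notion of atoroidal, where the torus theorem shows that exceptional pieces carrying a non-peripheral $\Z\times\Z$ but no essential embedded torus are small Seifert fibered spaces; your write-up conflates these two notions.

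In the uniqueness part, the step ``an innermost-circle argument plus irreducibility yields disjoint $\mathcal{T}$ and $\mathcal{T}'$'' does not work. Innermost-curve arguments only remove intersection circles that are inessential on the tori; once all intersection circles are essential on both sides, they cannot in general be removed by isotopy (two incompressible tori in $T^3$ with nonzero homological intersection can never be made disjoint). Showing that tori of one admissible minimal family \emph{can} be isotoped off the other is exactly the hard content of JSJ uniqueness: one must analyze the essential annuli in which a torus of $\mathcal{T}'$ meets the pieces of the $\mathcal{T}$-decomposition, rule them out in atoroidal pieces and straighten them to vertical annuli in Seifert pieces (this is where the characteristic submanifold machinery, or the Neumann--Swarup style argument, does its work), and also treat the exceptional pieces such as $T^2\times I$ and the twisted $I$-bundle over the Klein bottle. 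As written, the disjointness claim is an unsupported assertion standing in for the heart of the proof, and the subsequent matching argument inside Seifert pieces is likewise only gestured at.
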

	
	Because the minimal set of incompressible tori is unique up to isotopy, any self-diffeomorphism of a $3$-manifold $Y$ must preserve the decomposition. In other words, each JSJ component in the complement of the incompressible tori must be sent by the diffeomorphism to another (diffeomorphic) JSJ component.
	
	We focus on JSJ decompositions for graph manifolds with plumbing graphs which are trees, and each vertex corresponds to a disk bundle over a sphere. In the plumbing graph, each edge corresponds to a torus in the 3-dimensional graph manifold (this is the $S^1\times S^1$ on the corner of the $D^2\times D^2$ used for the plumbing of the disk bundles). Note that if there is a linear chain of edges so that all vertices in the middle of the chain are valency $2$, then the tori corresponding to any two edges in the chain are isotopic to each other (such a linear chain corresponds to a $T^2 \times I$ subset of the graph manifold). A linear chain ending in a vertex of valency $1$ has edges corresponding to compressible tori (since such a linear chain corresponds to a solid torus in the graph manifold). Finally, a connected subset of the graph which contains at most one vertex of valency greater than $2$ corresponds to a Seifert fibered portion of the graph manifold. Thus the minimal set of incompressible tori yielding the JSJ decomposition corresponds to a minimal set of edges cutting the graph into a disjoint union of star-shaped graphs, and each JSJ component is Seifert fibered.

 The collection of tori which define a plumbing structure on a graph manifold is generally larger than the JSJ collection of tori, so Popescu-Pampu's result on uniqueness of plumbing structures refines the uniqueness of JSJ decompositions for graph manifolds. We briefly summarize the idea of Popescu-Pampu's argument. 
 He first adds additional tori to the JSJ decomposition so that each piece in the complement is genuinely fibered (no exceptional fibers)--this comes from Waldhausen's notion of a minimal graph structure. The fibration structure of each piece is determined by the topology of the piece, unless the piece is a solid torus. This uses that the base of each plumbing vertex is orientable, and follows from the fact that $T^2\times I$ and the solid torus are the only 3-manifolds admitting non-unique Seifert structures over an orientable base. Then the decomposition is refined further to split the linear chains with more than one edge and to determine the plumbing structure on the solid tori. The main difference between a graph structure and a plumbing structure is that in a plumbing structure, when two fibrations are glued along a torus, the homology classes of the fibers from the two sides must have intersection number $\pm 1$ in the gluing torus. In a graph structure, the only requirement is that the fibers are not parallel. Popescu-Pampu uses a combinatorial lattice procedure to uniquely break down $T^2\times I$ with two fixed fibrations on the two boundary tori into a sequence of $T^2\times I$ pieces with distinct fibrations on each copy, starting and ending with the chosen fixed fibrations on the boundary but meeting at intermediate tori according to the plumbing structure requirement. The uniqueness here uses negative definiteness and minimality of the plumbing graph. In solid tori, this is done in the same way by interpolating between the fibration of $T^2$ coming from the other side on the boundary torus and the fibration of $T^2$ by meridians. The piece corresponding to the valence 1 vertex in a linear chain has fibration structure coming from the last $T^2\times I$ piece in the interpolation, filled in to a solid torus by matching the meridians. Thus we have

	\begin{theorem}[Theorem 9.7~\cite{PP}]
	The plumbing structure induced by the minimal normal crossing resolution of the link $Y$ of a normal surface singularity $(X, 0)$ is invariant under $\Diff^+(Y)$, up to isotopy.
	\end{theorem}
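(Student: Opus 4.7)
The plan is to build invariance in successive refinements, each canonical up to isotopy. First I would invoke the JSJ decomposition (Theorem~\ref{thm:JSJ}) to extract the minimal family $\mathcal{T}_{JSJ}$ of incompressible tori, which is unique up to isotopy and hence preserved by any $\phi \in \Diff^+(Y)$. For our plumbed graph manifolds, all complementary pieces are Seifert fibered, and on the level of the tree this cuts the dual graph into maximal star-shaped subgraphs, one per vertex of valency $\geq 3$ or per linear component.

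Next I would pass to a Waldhausen-style minimal graph structure by adjoining further tori so that each piece becomes a genuine $S^1$-bundle over an orientable surface (eliminating exceptional fibers). This enlargement is still canonical up to isotopy. Whenever a piece is neither a solid torus nor $T^2 \times I$, the Seifert fibration over an orientable base is rigid, so $\phi$ must send fibers to fibers there, and in particular it preserves the homology class of the fiber on each boundary torus of such a piece. So far, everything is determined by 3-manifold topology alone.

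The main step is then to refine inside each linear chain of vertices of valency $\le 2$, which corresponds either to a $T^2 \times I$ or to a solid torus at a valency-one end. These pieces admit infinitely many inequivalent fibrations, so the internal plumbing tori are not forced by topology. The fibrations on the two boundary tori of the chain are, however, already pinned down by the adjacent rigid pieces (or by the meridian disk in the solid torus case). The plumbing structure requires that consecutive fibers in the chain have intersection number $\pm 1$ in $H_1(T^2;\Z) \cong \Z^2$. I would argue, following Popescu-Pampu, that the sequence of primitive classes interpolating between the two fixed boundary classes with consecutive determinant $\pm 1$, subject to negative-definiteness and minimality of the plumbing graph, is unique; this is essentially the uniqueness of the negative continued fraction expansion associated to the chain. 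Since $\phi$ preserves the boundary fiber classes of the chain, it must send the intermediate plumbing tori to isotopic ones.

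The hard part is the lattice-combinatorial step in the $T^2 \times I$ and solid torus pieces, where uniqueness really uses the hypothesis that $G$ is a minimal negative definite plumbing graph. Rather than reproving this lattice rigidity, I would cite \cite[Theorem 9.7]{PP} for the interpolation statement and spend the effort on explaining why the JSJ and Waldhausen refinements reduce the problem to that lattice statement. Combining the three steps, every $\phi \in \Diff^+(Y)$ preserves each family of tori in the plumbing structure up to isotopy, and on the complement preserves the fibrations up to isotopy, which is exactly the claim.
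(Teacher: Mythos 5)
Your proposal follows essentially the same route the paper takes: it states the result as Popescu-Pampu's Theorem 9.7 and sketches the same three-step reduction (uniqueness of the JSJ tori, refinement to a Waldhausen-type minimal graph structure using rigidity of Seifert fibrations over orientable bases away from $T^2\times I$ and solid tori, then the lattice interpolation in the chains, whose uniqueness rests on negative definiteness and minimality of the plumbing graph), deferring the combinatorial core to \cite{PP} just as the paper does. The only quibble is that you cite Theorem 9.7 itself for the interpolation step, which is really an internal lemma of Popescu-Pampu's proof rather than the theorem's statement, but this does not affect the correctness of the outline.
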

	
	This means that any orientation-preserving diffeomorphism preserves (up to isotopy) the set of cutting tori corresponding to edges of the graph as well as the fibration structures on each piece corresponding to the vertices of the graph. 
	As described above, the normal Euler number of the bundle is determined by the fibrations of the plumbing structure. In other words, any orientation-preserving diffeomorphism corresponds to an automorphism of the plumbing graph (with vertices decorated with their self-intersection numbers). In particular, every such diffeomorphism is isotopic to the identity if the decorated plumbing graph has no symmetries.
 
 \subsection{A marking of the plumbing} 
 
 A decorated germ $(C, w)$ of a sandwiched singularity $(X, 0)$ induces a {\em marking} of its plumbing structure, \cite{NPP}.  
	Suppose that the irreducible components $\{C_i\}_{i=1}^m$  of $C$ are labeled by the integers $i=1, \dots m$. The marking is the map
	 from the set $\{1, 2, \dots, m\}$ to the set of pieces of $Y$ in the plumbing structure: if the curvetta $C_i$ sits on the exceptional curve $E(C_i)$ as in Notation~\ref{def:EC}, then this map sends $i$ to the piece of $Y$ corresponding to the vertex $E(C_i)$. This is equivalent to a labeled collection of fibers $f_1,\dots, f_m$ of the plumbing structure where $f_i$ is a fiber of the piece corresponding to $E(C_i)$, so we can view the marking as a labeled subset of $Y$.
	 
	 \begin{cor} \label{cor:diff} Suppose that the plumbing graph corresponding to the decorated germ $(C, w)$ admits no non-trivial automorphisms. Then the marking is preserved by any orientation-preserving diffeomorphism of the link $Y$.
	 \end{cor}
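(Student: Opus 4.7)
The plan is to apply Popescu--Pampu's theorem to reduce the problem to a combinatorial statement about automorphisms of the decorated plumbing graph, then invoke the no-symmetry hypothesis, and finally unpack the definition of the marking. First I would take an arbitrary $\phi\in\Diff^+(Y)$ and use the theorem of Popescu--Pampu recalled above to isotope $\phi$ so that it preserves the collection of plumbing tori and the fibration on each piece. As explained in the paragraph preceding the corollary, the normal Euler numbers decorating the vertices can be recovered from the plumbing-structure fibrations via the formula $e = \sum_j -n_j$; since $\phi$ preserves those fibrations up to isotopy, the induced permutation of vertices must respect the decorations. Hence $\phi$ gives rise to an automorphism of the decorated plumbing graph.

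By hypothesis, the decorated plumbing graph admits only the trivial automorphism, so the permutation of the vertex set induced by $\phi$ is the identity. Equivalently, after an isotopy, $\phi$ sends each piece of $Y$ to itself, taking its fibration structure to itself (up to isotopy of fibers within the piece).

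To finish, I would unpack the definition of the marking. As described just before the corollary, the marking can be encoded as the labeled collection of fibers $\{f_1,\ldots,f_m\}\subset Y$, where $f_i$ lies in the piece attached to $E(C_i)$. Within a single piece all circle fibers of the plumbing structure are isotopic (the piece is a trivial $S^1$-bundle over an orientable surface with boundary), so the statement that $\phi$ preserves, for each $i$, the piece containing $f_i$ is precisely the statement that the labeled collection $\{f_i\}$ is preserved up to isotopy. The only substantive step is the appeal to Popescu--Pampu's theorem; the rest is translating between the diffeomorphism side and the combinatorial side of the marking, and there is no serious obstacle to overcome beyond being careful that preservation is meant up to isotopy rather than on the nose.
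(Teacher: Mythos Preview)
Your proposal is correct and follows essentially the same approach as the paper: the corollary is stated there as an immediate consequence of Popescu--Pampu's theorem together with the observation (made in the paragraph preceding it) that any orientation-preserving diffeomorphism induces an automorphism of the decorated plumbing graph, which must be trivial under the no-symmetry hypothesis. Your write-up is in fact slightly more careful than the paper's discussion, since you only claim that $\phi$ preserves each piece with its fibration (which suffices for the marking), rather than asserting that $\phi$ is isotopic to the identity on all of $Y$.
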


	 In this sense, once we fix a decorated germ encoding $(X, 0)$, the marking becomes part of the intrinsic data of the link, as long as there are no symmetries.  
 
\section{Spinal open books and nearly Lefschetz fibrations} \label{sec:spinal}

In \cite{PS}, we used open books and Lefschetz fibrations to build a symplectic analog of the de Jong--van Straten theory in the reduced fundamental cycle case, describing Stein fillings of the contact link $(Y, \xi_{can})$ of $(X, 0)$ via certain curve arrangements. To extend the story to the general sandwiched case, we    
will use spinal open books and nearly Lefschetz fibrations developed in \cite{LVHMW, LVHMW2, HRW}.   

\subsection{Spinal open books on links of singularities}
  
A classical open book decomposition of a 3-manifold $Y$ is given by a fibered link $\beta$ in $Y$, called the {\em binding}, and a surface bundle $M$ over $S^1$ whose fibers are connected oriented surfaces with boundary called {\em pages}, so that $\beta$ is the boundary of each page. The monodromy of the surface bundle fixes the boundary of each page pointwise.
In the terminology of \cite{LVHMW}, $M$ is the {\em paper} of the open book, and  the tubular neighborhood of the binding is its {\em spine}. In the classical case, the spine is a union of solid tori $S^1 \times D^2$, where each solid torus corresponds to a boundary component of the page (a binding component), so that each page intersects $\partial (S^1 \times D^2)$ along a longitudinal curve. We will consider a generalization where the monodromy of the surface bundle is allowed to permute the boundary components of the page. Accordingly, each solid torus component of the spine may now intersect a page along $d$ longitudinal curves with $d>1$; these correspond to the boundary components permuted by the monodromy. Spinal open books in \cite{LVHMW, LVHMW2, HRW} are in fact much more general and  have another new feature: the spine may have connected components of the form $S^1 \times \Sigma$, where $\Sigma$ is an arbitrary oriented surface, possibly with multiple boundary components. {We will not need this broader generalization;
all our open books  satisfy a more restrictive assumption
that the spine is always the union of solid tori. 
This assumption implies that our open books are also a special case of rational open books of~\cite{BEVHM}. The key 
examples are multilinks of~\cite{EisNeu}, see below.
We give a definition for the specific class of the open books we need.}

\begin{definition} \label{def:spinal} We say that 
a connected closed oriented  $3$-manifold $Y$ 
has a {{\em spinal open book decomposition with multilink binding}} $\beta$ if

(1) The complement $M  = Y \setminus \nu(\beta)$ of a tubular neighborhood of the oriented link $\beta$ fibers over $S^1$.  The fibers of the fibration $\pi: M \to S^1$, called pages,  are compact oriented surfaces with nonempty boundary contained in $\d M$.  The pages meet $\d M$ transversely. 

(2) The boundary of each page is the union of parallel longitudinal copies of binding components in $\d \overline{ \nu(\beta)}$.

We say 
that the binding component $\beta_i$ has multiplicity $d_i$ if there are $d_i$ parallel copies of $\beta_i$ in the boundary of each page.

As in the classical case, a spinal open book \ls{with multilink binding} on a 3-manifold $Y$ supports the (co-oriented) 
contact structure $\xi$ with a contact form $\alpha$ if the binding is positively transverse to the contact planes, and $d \alpha$ induces an area form on each page. The contact struture supported by a given spinal open book is unique up to isotopy, \cite{LVHMW}. 
\end{definition}

{We will typically omit the words ``multilink binding'' for brevity, since all spinal open books in this paper are as in~Definition~\ref{def:spinal}.}

{A model for the neighborhood of a multilink binding component of multiplicity $m$ looks as follows. Consider $\R^3 =\{(r, \theta, z )\}$
with cylindrical coordinates and 
its standard contact structure $\ker({dz +r^2 \,d\theta})$. In the complement of the $z$-axis, the fibration over $S^1$ given by the vertical planes $\theta= const$ gives a model for a neighborhood of the binding of a classical open book. For a binding component of multiplicity $m$, the model is given by the pullback of this open book 
via the branched covering map $(z, w) \mapsto (z, w^m)$, where $w=r e^{i\theta}$. Each page meets
this spine component along $m$ vertical planes in this model, cutting $m$ 
parallel longitudes on its boundary torus. If we label the corresponding boundary components of the page by $1, \dots, m$, in the cyclic 
order given by their $\theta$-coordinate, then the monodromy of the open book permutes these page boundary components by the cyclic permutation $(12 \dots m)$. (We discuss spinal monodromy in more detail in Section~\ref{sec:diagrams}.)
When the multilink binding has several components, 
the monodromy permutation of the boundary components of the page decomposes as the product of cycles that correspond to the individual binding components. 
Up to isotopy, a spinal open book with multilink binding can be reconstructed from its page and the monodromy, by gluing the mapping torus part of the open book to  the solid tori that correspond to the binding, according to standard models.}

{An open book with multilink binding is a particular case of a spinal open book {\em uniform with respect to the disk}, 
\cite[Definition 2.3]{HRW}. This latter notion refers to more general spinal open books, where instead of the tubular neighborhood of each binding component, one has  spine components, which are $S^1$-bundles over arbitrary vertebrae surfaces $\Sigma_i$. In the multilink case, the spine components are solid tori, and each $\Sigma_i$ is the cocore disk 
of the corresponding solid torus. 
The uniform condition requires that each $\Sigma_i$ be   
a simple branched covering of $D^2$. The model of the multilink binding 
gives a cyclic covering of $D^2$ by the cocore of the multilink spine; we obtain a simple branched covering by deforming the standard model. 
If the multilink binding component has multiplicity $m$, then the simple covering will have $m-1$ simple branch points. Note that any two such branched coverings of the disk by a disk are equivalent \cite{BerEdm}, and that both the multilink and the spinal models support the same contact structure near the binding. We will need both models in this paper, as appropriate for specific statements: multilinks arise naturally when open books are induced by holomorphic functions, while the spinal structures are important in the context of (nearly) Lefshetz fibration.}

{We discuss the setting of singularities in some detail.}
If $(X, 0)$ is a normal surface singularity, a Milnor open book on its link is induced by a holomorphic function $f: X \to \C$ with $f(0)=0$, see \cite{CNPP} for a detailed discussion. To get a classical 
open book, one needs to assume  that $f$ defines the isolated singularity at 0 (this hypothesis means that the strict transform $\div_s (\pi \circ f)$ of $f$ in the embedded resolution $\pi:\ti X \to X$ of $f$ is reduced). Any such open book supports the canonical contact structure $\xi_{can}$ on the link.
If $f$ vanishes to a higher order, most of the arguments go through verbatim.
The only change is that the local model along a binding component is now given by $\arg z^n$, $n>1$, and one gets a spinal open book {with multilink binding} as above.

In more detail, fix an embedding of the germ $(X, 0)$ into $\C^N$ and the rug function $\rho(z_1, \dots z_N)= \sum_{i=1}^N |z_i|^2$. For a small $\epsilon >0$, the link is 
$Y_{\rho, \epsilon}= X \cap \{\rho=\epsilon\}$. The canonical contact structure $\xi_{can}$ on $Y_{\rho,\epsilon}$ is the kernel of $\alpha = -d^{\C}\rho$ (restricted to the tangent space of $Y_{\rho,\epsilon}$).
The function $\arg f$ gives a fibration on the complement of the binding compatible with the canonical contact structure on  $Y_{\rho, \epsilon} \setminus \{f= 0\}$: this is established in~\cite{Hamm,CNPP}, and the relevant calculations apply directly in our case. To examine 
the structure of the open book near the binding, consider an embedded 
resolution of $f$ with normal crossings, $\pi: \ti{X} \to X$,  where (the preimage of) the link $Y = Y_{\rho,\epsilon}$  is the boundary of a small tubular neighborhood of the exceptional divisor $\pi^{-1}(0)$. Then, the strict transform  $\div_s (\pi \circ f)$ is given by complex disks transverse to some exceptional spheres. The fibration given by 
$\arg f$ on $Y \setminus \{f= 0\}$ pulls back to the fibration on the complement of $\{\pi \circ f =0\}$.
 If  $\div_s (\pi \circ f)$ is reduced, then each of the transverse disks   
 is locally modelled on $\{z=0\}$, and the binding component cut out on $Y$ has the open book modeled on $\arg z$ in its neighborhood. In the non-reduced case where the given disk has multiplicity $d>1$ in 
 $\div_s (\pi \circ f)$ -- that is, $f$ vanishes to order $d$ on the corresponding branch of $\{f =0\}$, -- 
 the model is given by $\arg z^d$, with the pages of the open book approaching the binding component along $d$ longitudes.  Standard arguments show that once $f$ is fixed, 
 the open book is independent of choices made in the construction of the link, up to isomorphism. We have

\begin{theorem} [c.f.~\cite{Hamm,CNPP}] \label{MilnorOB}
There is a $C^{\infty}$ fibration 
$\arg f: Y \setminus \{f= 0\} \to S^1$ which induces a {spinal open book with multilink binding} on $Y$. 

The multiplicity of each binding component is given by the order of vanishing of $f$ on the corresponding branch of $\{f=0 \}$. If $f$ defines the isolated singularity at 0, then the pages meet each binding component along a longitude, that is, the fibration gives an open book in the classical sense. 

The spinal open book supports the canonical contact structure $\xi_{can}$ on the link.
\end{theorem}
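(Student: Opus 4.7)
The plan is to closely follow the proof of the classical Milnor fibration theorem on links of singularities as in \cite{Hamm, CNPP}, making the modifications required by the presence of higher-multiplicity branches in $\div_s(\pi \circ f)$. First I would fix an embedded resolution $\pi: \ti{X} \to X$ of $f$ with normal crossings, so that $\pi^* f$ is locally a monomial near each point of the exceptional divisor. The strict transform $\div_s(\pi \circ f)$ consists of smooth disks transverse to exceptional spheres, each appearing with some multiplicity $d_i \geq 1$ equal to the order of vanishing of $f$ on the corresponding branch of $\{f=0\}$. The link $Y_{\rho,\eps}$ is identified (for small $\eps$) with the boundary of a tubular neighborhood of the exceptional divisor, and the binding component $\beta_i$ is the trace on $Y$ of the $i$-th branch of $\{f=0\}$.

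For the existence of the fibration, the key verification is that $\arg f$ has no critical points on $Y_{\rho,\eps} \setminus \{f=0\}$ for $\eps$ sufficiently small. This is done in \cite{Hamm, CNPP} by a curve selection lemma argument applied to the pair $(\rho, f)$, and uses only holomorphicity of $f$, not reducedness of its strict transform, so it applies verbatim and yields the smooth fibration $\arg f : Y \setminus \{f=0\} \to S^1$.

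The behavior of the fibration near the binding is analyzed via the local resolution model. In local complex coordinates $(z, w)$ on $\ti X$ near a smooth point of $\div_s(\pi \circ f)$, we have $\pi^* f = u \cdot z^{d_i}$ for some unit $u$, so $\arg f = d_i \arg z + \arg u$. A level set $\{\arg f = \theta\}$ in a small bidisk therefore consists of $d_i$ holomorphic half-disks meeting the binding preimage $\{z = 0\}$ along $d_i$ distinct rays. Pushing this forward to $Y$, each page contains $d_i$ parallel longitudinal copies of the binding component $\beta_i$ in its boundary, giving a spinal open book with the claimed multiplicities. When $\div_s(\pi \circ f)$ is reduced, i.e.\ when $f$ defines an isolated singularity at $0$, all $d_i = 1$ and each page meets each binding component along a single longitude, recovering the classical open book.

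Finally, for the contact compatibility, I would check the two required conditions directly: the binding is positively transverse to $\xi_{can}$, and $d\alpha$ restricts to a positive area form on each page. The first holds because each binding component $\beta_i$ is a complex curve in $Y$ oriented as such, hence positively transverse to the distribution of complex tangencies. For the second, each page lifts through $\pi$ to a real surface in $\ti X$ foliated by (half-branches of) complex level sets of $\pi^* f$; since $d\alpha$ is the restriction of a Kähler form, it is positive on such holomorphic slices, and this positivity extends continuously to the approaching sheets near the binding. The main technical obstacle I anticipate is arranging a careful identification of each tubular neighborhood $\nu(\beta_i) \subset Y$ with a standard solid torus $\beta_i \times D^2$, so that the $d_i$ local sheets become genuine parallel longitudes (and not, say, cables of $\beta_i$); this is handled by using the gradient flow of $|f|$ on $Y \setminus \{f=0\}$ to push pages outward to the binding, exactly as in the reduced case treated in \cite{CNPP}. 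Independence of the resulting spinal open book from the auxiliary choices (embedding, $\eps$, resolution) then follows from the standard isotopy arguments in that reference.
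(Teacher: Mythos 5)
Your first three steps (embedded resolution, local model $\pi^*f = u\,z^{d_i}$ giving $\arg f = d_i\arg z + \arg u$ and hence $d_i$ longitudinal page-boundary circles, and the observation that the critical-point/curve-selection argument of \cite{Hamm,CNPP} never uses reducedness of the strict transform) follow the same route as the paper, which likewise defers those points to \cite{Hamm,CNPP}. The gap is in the contact-compatibility step, which is precisely the part the paper proves in detail. The compatibility condition requires the binding to be positively transverse to $\xi_{can}$ with respect to the orientation \emph{induced by the pages}; in the spinal setting the boundary of a page is a union of $d_i$ longitudes that collapse onto $\beta_i$, so it is not automatic that this induced orientation agrees with the orientation of $\beta_i$ coming from the complex curve $\{f=0\}$. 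Your sentence that ``each binding component is a complex curve in $Y$ oriented as such, hence positively transverse'' assumes exactly what has to be shown. The paper's proof is devoted to this orientation matching: it identifies the page-induced orientation with the boundary orientation of $f^{-1}(0)\cap\{\rho\le\eps\}$ by deforming through $f^{-1}(t)\cap\{\rho\le\eps\}$ and isotoping $f^{-1}(\eta)$ to a page, and only then runs the computation $\alpha(V)=d\rho(-JV)>0$. Without that step your transversality claim is unjustified in the non-reduced case.

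Second, your argument that $d\alpha$ is an area form on the pages would fail as stated. A page is a $2$-dimensional surface inside the $3$-manifold $Y$; it is foliated by the real $1$-dimensional curves $f^{-1}(re^{i\theta})\cap Y$, not by complex level sets of $\pi^*f$ (indeed a strictly pseudoconvex hypersurface contains no holomorphic curve at all), so positivity of the K\"ahler form on holomorphic slices gives no information about the restriction of $d\alpha$ to the page's tangent planes. Establishing this condition requires the explicit calculations of \cite{Hamm,CNPP}; the paper handles it by noting that those calculations apply verbatim when $f$ is not locally reduced, and you could do the same, but the holomorphic-slice argument you propose does not substitute for it.
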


\begin{proof} The only statement that remains to check is the compatibility of the contact structure and the open book along the binding.
We show that if a vector $V$ is positively tangent to the binding, then $\alpha(V)>0$ for the contact form $\alpha$ defined above. In an ordinary open book, the orientation on the binding is induced as the boundary orientation of a page. In our spinal context, the boundary of the page may multiply cover the binding, but it induces an orientation in the same way. Another way to see this is to consider a page as the complement of a neighborhood of the binding $\arg(f)^{-1}(\theta_0)\cap Y_{\rho,\epsilon} \cap \{|f|<\eta\}$ for sufficiently small $0<\eta<\varepsilon$. This page has genuine boundary which is a cable of the binding (consisting of parallel longitudes), with a boundary orientation induced in the usual way. As $\eta\to 0$, the cable collapses to the binding and the orientation on the cable collapses to an orientation on the binding $f^{-1}(0)\cap Y_{\rho,\epsilon}$. With respect to this orientation on $f^{-1}(0)\cap Y_{\rho,\epsilon}$, we want to see that $\alpha=-d^{\C}\rho$ evaluates positively. First we show that the orientation on $f^{-1}(0)\cap Y_{\rho,\epsilon}$ as the binding agrees with its orientation as the boundary of $f^{-1}(0)\cap \{\rho\leq \varepsilon\}$ (with its complex orientation). To see this, note that there is a deformation $f^{-1}(t)\cap (\rho\leq \varepsilon)$ from $f^{-1}(\eta)\cap (\rho\leq \varepsilon)$ to $f^{-1}(0)\cap (\rho\leq \varepsilon)$ for $0<\eta<\epsilon$ sufficiently small, which carries the orientation on $f^{-1}(\eta)\cap (\rho\leq \varepsilon)$ as the boundary of $f^{-1}(\eta)\cap (\rho\leq \varepsilon)$ to the orientation on $f^{-1}(0)\cap (\rho\leq \varepsilon)$ as the boundary of $f^{-1}(0)\cap (\rho\leq \varepsilon)$ (possibly collapsing as a multiple cover when $t=0$). Observe that there is an isotopy between $f^{-1}(\eta)$ and a page of the spinal open book $arg(f)^{-1}(1)$, by rescaling the coordinate of the radial component of $f$, and this carries the boundary orientation of $f^{-1}(\eta)\cap (\rho=\varepsilon)$ to the binding orientation. Thus, the binding orientation agrees with the orientation as the boundary of $f^{-1}(0)\cap (\rho\leq \varepsilon)$. Now since $f^{-1}(0)\setminus 0$ is a complex curve, a vector $V$ tangent to $f^{-1}(0)\cap (\rho =\varepsilon)$ induces the boundary orientation if and only if $-JV$ is an outward normal vector to $f^{-1}(0)\cap (\rho\leq \varepsilon)$, because $(-JV,V)$ is the positive complex orientation on $f^{-1}(0)$.  The vector $-JV$ is outward normal if and only if $d\rho(-JV)>0$. Thus $V$ positively orients the binding if and only if $d\rho(-JV) = -d^{\C}\rho (V) = \alpha(V)>0$, as desired.
\end{proof}

Turning back to sandwiched singularities, we now use a Milnor-type fibration to construct a spinal open book from a decorated germ. We fix an identification of the abstract link with the boundary of the singular surface (or its resolution); this fixes a plumbing structure on $Y$.  Notation in the lemma below refers to Notation~\ref{def:EC}.

\begin{lemma}\label{binding-from-function} Let $(X, 0)$ be a sandwiched singularity with the decorated germ $(C, w)$ and the contact link $(Y, \xi)$.
There exists a planar spinal open book for $(Y , \xi)$ with the following {multilink} binding data. The binding $\beta= \cup_{i=0}^m \beta_i$ is given by the union of $S^1$-fibers of the plumbing structure on $Y$, where  $\beta_i$ is a fiber of the  $S^1$-bundle over $E(C_i)$, $i>0$, and $\beta_0$ is a fiber over $E_0$.
For $i>0$, the multiplicity of the {multilink} binding component $\beta_i$ equals the multiplicity of $C_i$ at $0$, while $\beta_0$ has multiplicity 1 and corresponds to the outer boundary component of the page.
\end{lemma}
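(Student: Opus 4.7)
The plan is to apply Theorem~\ref{MilnorOB} to the holomorphic function $f = x \circ \sigma \colon X \to \C$, where $\sigma \colon X \to \C^2$ is the birational morphism coming from the sandwich structure (the composition of the contraction $\tilde Y \to X$ of the $G$-curves with the blow-down $\tilde Y \to \C^2$), and $x$ is a linear coordinate on $\C^2$ chosen generic with respect to the germ, in the sense that no branch of $C$ is tangent to $\{x=0\}$ at the origin. Since $f$ vanishes at $0$, Theorem~\ref{MilnorOB} directly produces a spinal open book on $Y$ supporting $\xi_{can}$ whose binding is $\{f=0\} \cap Y$ and whose multiplicities are the vanishing orders of $f$ along the branches of $\{f=0\}$ through $0$.

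The main technical content is identifying the branches of $\{f=0\}$ through $0 \in X$ and computing the vanishing orders. Identifying $\tilde X$ with $\tilde Y$ (a blow-up of $\C^2$ realizing $G'$), the divisor of $\pi_Y^* x$ on $\tilde Y$ has the form $\div(\pi_Y^* x) = D_0 + \sum_v a_v E_v$, where $D_0$ is the strict transform of $\{x=0\}$---a smooth disk transverse to $E_0$ at a generic point disjoint from all subsequent blow-up centers, by genericity of $x$---and the $E_v$ are the exceptional curves of $\pi_Y$. For each extra $(-1)$ curve $E_{j_i} \in G' \setminus G$ attached to $E(C_i)$, the multiplicity $a_{E_{j_i}}$ can be read off from the projection-formula identity $(C_i, \{x=0\})_0 = \div(\pi_Y^* x) \cdot \tilde C_i = a_{E_{j_i}}$, since the curvetta $\tilde C_i$ meets only the exceptional curve $E_{j_i}$, transversely at a single generic point, while $D_0 \cdot \tilde C_i = 0$. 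Because $\{x=0\}$ is not tangent to $C_i$, the left-hand side equals $m_i := \mathrm{mult}_0(C_i)$, so $a_{E_{j_i}} = m_i$. The branches of $\{f=0\}$ through $0 \in X$ are therefore $\hat L = \pi(D_0)$ and $\pi(E_{j_i})$ for $i=1, \ldots, m$, with vanishing orders $1$ and $m_i$ respectively.

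Finally, each branch yields an $S^1$-fiber binding component in the expected piece of the plumbing structure. The disk $D_0 \subset \tilde X$, transverse to $E_0$ at one generic point, meets $Y$ in a fiber of the circle bundle over $E_0$, giving $\beta_0$ with multiplicity $1$. Each $E_{j_i}$ meets only $E(C_i)$ among the $G$-curves (transversely at one point), so $E_{j_i} \cap Y$ is a fiber of the circle bundle over $E(C_i)$ lying above the point $E_{j_i} \cap E(C_i)$, giving $\beta_i$ with multiplicity $m_i$. The ``outer boundary'' role of $\beta_0$ reflects that $\hat L$ is the image of a coordinate line on the $\C^2$ side of the sandwich, playing the role of the outer direction in the Milnor ball picture. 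I expect the projection-formula computation $a_{E_{j_i}} = m_i$ to be the main step requiring care, as it is what ties the combinatorial data of the decorated germ to the multiplicities of the spinal open book; the rest follows by packaging the output of Theorem~\ref{MilnorOB}.
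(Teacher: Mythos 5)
Your proposal is correct and follows essentially the same route as the paper: the function $f = x\circ\sigma$ is exactly the paper's $g$ (the paper constructs it by contracting the exceptional curves and invoking normality rather than citing the sandwich morphism directly), Theorem~\ref{MilnorOB} is applied to it, and your projection-formula computation $(C_i,\{x=0\})_0 = \div(\pi_Y^*x)\cdot\tilde C_i = a_{E_{j_i}} = \mathrm{mult}_0(C_i)$ is precisely the paper's ``total transform dotted with strict transform is blow-up invariant'' argument giving multiplicity $d_i$ to the augmentation $(-1)$-curve, with the strict transform of $\{x=0\}$ supplying the multiplicity-one fiber over $E_0$. No substantive differences.
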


\begin{proof} Starting with the decorated germ, we will construct a related holomorphic function on $(X, 0)$ inducing the desired Milnor fibration.  
Consider  the embedded resolution of $(C, w)$  with normal crossings that corresponds to the augmented sandwiched graph. Let $\alpha: B \#_{j=1}^n \cptwobar \to B$ denote the blow-up morphism. The blow-up $\alpha^{-1}(U)$ of a neighborhood of the origin in $B$ is a neighborhood of $\alpha^{-1}(0)$ given by the plumbing of disk bundles over the exceptional curves;  
strict transforms $\ti{C_i}$ of the curvettas $C_i$ are fibers of the bundles over the $(-1)$ curves. It follows that 
$\alpha^{-1}(U) - \cup_i \nu(\ti{C_i})$ is isomorphic to the  resolution $\ti{X}$. Let $f:\C^2 \to \C$ be the projection $f(x, y) = x$. Its fibers are vertical planes. Its pullback $\alpha \circ f: B \#_{i=1}^n \cptwobar \to B$ restricts to a holomorphic function on  $\ti{X}$ via the isomorphism above. If we contract the exceptional curves,  $\alpha \circ f$ restricts to a continuous function $g$ on $(X, 0)$, holomorphic away from $0$. By normality, $g$ is then holomorphic on $(X, 0)$. The fibration 
$$
\arg g: X \setminus \{ g=0 \} \to S^1 
$$
gives a (spinal) Milnor open book with planar fibers on $Y = \d X$ whose binding is $\{g=0\} \cap Y$. To describe the binding in terms of the plumbing, consider the total transform $\bar{L}$ in  $B \#_{i=1}^n\cptwobar$ of the curve $L=\{x=0\} \subset \C^2$; we have $\bar{L} = \div (\alpha \circ f)$. The multiplicity $d_i$ of the curvetta $C_i$ at $0$ is given by the intersection with a generic line, 
so we have 
$$
L \cdot C_i = d_i = \text{ degree of the branched covering } C_i \to D_x.
$$
Here and below, $L\cdot C_i$ denotes the intersection number of the algebraic curves which must be defined carefully since the curves have boundary. One could interpret $L \cdot C_i$ as the intersection of chains with fixed disjoint boundaries (which can be computed by perturbing the chains away from the boundary to make them transverse). For a more concrete definition of the intersection number of two algebraic curves, we can
define this quantity at a single intersection point 
 by using the algebraic equations of the two curves, 
plugging in a primitive parametrization for one curve $(x,y)=(\phi(t),\psi(t))$ into the defining equation $g(x,y)$ for the other and looking at the degree of $g(\phi(t),\psi(t))$. Summing over all points of intersection yields a well-defined intersection number, see~\cite[Chapter 1.2]{Wall}. Under blow-ups, 
this intersection number changes as follows: for any curves $A$ and $B$ intersecting at a point $p$, $\bar{A}\cdot \ti{B} = A\cdot B$ where $\bar{A}$ is the total transform of $A$ under a blow-up at $p$ and $\ti{B}$ is the strict transform of $B$.  (To see this is true, suppose $A$ has multiplicity $k$ at $p$ and $B$ has multiplicity $\ell$ at $p$. Then the exceptional divisor $E$ appears with multiplicity $k$ in $\bar{A}$, so $\bar{A} = \ti{A}\cup kE$. Thus 
	$$\bar{A}\cdot \tilde{B} = \tilde{A}\cdot \tilde{B} + k E\cdot \tilde{B}. $$
By~\cite[Lemma 4.4.1]{Wall}, $\ti{A}\cdot \ti{B} = A\cdot B - k\ell$. By~\cite[Lemma 3.4.7]{Wall}, $E\cdot \ti{B} = \ell$. Thus $\bar{A}\cdot \ti{B} = A\cdot B$ as claimed.)

Using this fact for $L$, $C_i$, and the  total transform $\bar{L}$ and strict transform $\ti{C}_i$ after each blow-up in the sequence, we see that after all the blow-ups leading to the augmented resolution, $\bar{L}\cdot \ti{C}_i=L\cdot C_i = d_i$.


Since $\ti{C}_i$ is a smooth disk transverse to the corresponding $(-1)$ curve attached to $E(C_i)$ in  $B \#_{i=1}^n \cptwobar$ and otherwise disjoint from the components of $\bar{L}$, it follows that this $(-1)$ curve is contained in $L' =\div (\alpha \circ f)$ with multiplicity $d_i$. In $\alpha^{-1}(U) - \cup_i \nu(C_i)$, what's left of this $(-1)$ curve is a fiber over $E(C_i)$ in the plumbing structure.  Therefore, in $\ti{X}$ the vanishing locus of the restriction of $\alpha \circ f$ is given by the union of fibers over the curves $E(C_i)$, with corresponding multiplicities, together with the strict transform of $\{x=0\}$, which is the fiber over $E_0$ with multiplicity 1. In turn, this gives the binding data  for the spinal Milnor open book at $Y= \d (\alpha^{-1}(U) - \cup_i \nu(C_i))$ induced by the function $g$. Since every Milnor open book supports the canonical contact structure
by Theorem~\ref{MilnorOB}, this proves the lemma.   
\end{proof}

\subsection{Nearly Lefschetz fibrations}

{To describe symplectic fillings for spinal open books in our setting, we rely on the results of~\cite{HRW}. 
Before formal definitions,  Example~\ref{ex:squareroot} will give a prototype and a local model for the spinal planar open books and nearly Lefschetz fibrations we work with.}

\begin{example} \label{ex:squareroot} Consider  the curve $C=\{y^2 = x\} \subset \C^2$, 
which is a branched double cover of $\C= \C_{x}$ under the projection
$\pi_x: \C^2 \to \C_{x}$ to the first coordinate, with the branch point at $0$. Let $D_x\subset \C_{x}$ and $D_y \subset \C_y$ be the disks centered at $0$ in the coordinate planes, and let $W = (D_x \times D_y) \setminus \nu(C)$ be the complement of a small tubular neighborhood of $C$ in $D_x \times D_y $. This is a manifold with corners; we will always assume that the corners are smoothed. The 3-manifold $\partial W$ has a spinal open book decomposition: $\partial W = \partial_{vert} W \cup \partial_{hor} W$, 
where the vertical boundary  
$\partial_{vert} W = \pi_x^{-1}(\partial D_x)$ is the surface bundle over $S^1 = \partial D_x$ whose fiber is a disk with two holes. 
This surface bundle is the ``paper'' of the spinal open book. We write $P_\theta$ for the fiber over $\theta \in \d D_x$. The ``spine'' is given by the horizontal boundary 
$\partial_{hor} W = T_{outer} \cup T_{inner}$, which is the disjoint union of two solid tori.  The first solid torus 
$T_{outer}= D_x \times \partial D_y$ corresponds to the binding component given by the outer boundary of the page, and is glued to the boundary of the ``paper'' surface bundle as usual (for each $\theta\in \d D_x$, the outer boundary of the page $P_\theta$ gives one longitudinal curve $\{\theta\}\times S^1$ in $\d T_{outer}$). The second solid torus $T_{inner}=D^2 \times S^1$ corresponds to {\em both} remaining boundary components of the page, each giving a longitudinal curve on $\d T_{inner}=\d D^2 \times S^1$: the longitudes $\{\theta\} \times S^1$ and
$\{\theta +\pi \} \times S^1$ of $\d T_{inner}$ are identified with 
the boundaries of the two holes in $P_{2 \theta}$.
The projection of a meridian 
$\d D^2 \times \{s\}$ of $T_{inner}$ to $\d D_x$ is a double cover.

The monodromy of this spinal open book is a {\em boundary interchange} of the two inner boundary components: it is a positive half-twist fixing the parameterization of each component as in  Figure~\ref{fig:squareroot}. 
\end{example}

\begin{figure}[h!]
\centering
\includegraphics[scale=0.5]{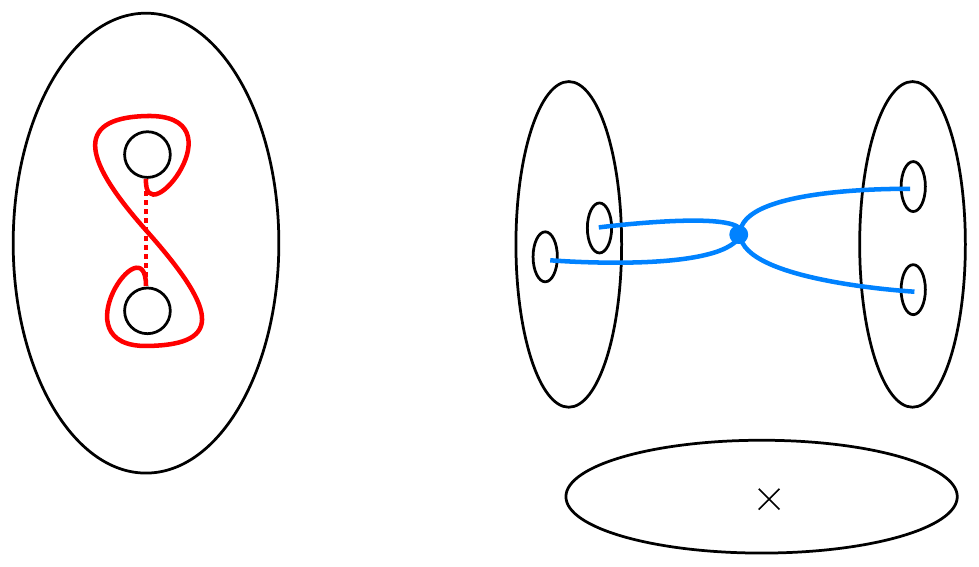}
\caption{The boundary interchange is the monodromy of the complement of the tubular neighborhood of the curve $\{y^2 = x\}$.}
\label{fig:squareroot}
\end{figure}

Nearly Lefschetz fibrations, studied in \cite{HRW}, are a generalization of the classical notion, with exotic fibers in addition to Lefschetz singularities. 

\begin{definition} Let $W$ be a $4$-manifold with corners, 
with boundary decomposed as $\d W = \d_{hor} W \cup \d_{vert} W$ so that  $\d_{hor} W$ and  $\d_{vert} W$ intersect over a collection of tori.  
A (bordered) nearly Lefschetz fibration over the disk   $D^2$ is 
a projection $\pi: W\to D^2$ such that  

\noindent (1) the projection $\pi$ gives a smooth fiber bundle over $\d D^2$, so that $\pi^{-1}(\d D^2) = \d_{vert} W$;

\noindent (2) $\pi$ has finitely many critical points,  each modelled on the map $(z_1, z_2) \mapsto z_1^2 + z_2^2$, with distinct critical values;

\noindent (3) there are finitely many exotic fibers, each locally modeled on  Example~\ref{ex:squareroot};

\noindent (4) away from the exotic fibers, the restriction of $\pi$ to $\d_{hor} W$ gives a smooth fiber bundle.

\noindent The fibers of the projection are assumed to be connected, and $\d_{hor} W$ is the union of the boundaries of the fibers.

\end{definition}

We refer the reader to~\cite[Section 2]{HRW} for a detailed discussion of nearly Lefschetz fibrations. All nearly Lefschetz fibrations will be positive and allowable (PANLF in the terminology of \cite{HRW}), and we will usually omit these words for brevity. The ``allowable'' hypothesis means that the vanishing cycles are homologically essential curves in the fiber. 
 
{Nearly Lefschetz fibrations naturally arise as the complements of positive multisections of Lefschetz fibrations, \cite{BaHa}. A positive multisection is a simple branched cover of the base, with the branch points modeled on the projection to $\C_x$ of the curve $y^2 = x$ in $\C^2=\C_x \times \C_y$.

A generic projection of a smooth complex curve in $\C^2$ gives a multisection of the product fibration $\C^2 \to \C$. Baykur and Hayano \cite{BaHa} show that given a positive multisection in a positive allowable Lefschetz fibration, the complement of the multisection has the structure of positive allowable nearly Lefschetz fibration. The fiber of this new fibration is the fiber of the original fibration with holes. 
The branch points of the multisection correspond to the exotic fibers where two holes are merged together.
The converse is also true: by patching together local models, one can show that every nearly Lefschetz fibration is in fact the complement of a multisection in some honest Lefschetz fibration.  See \cite[Theorem 1.1]{BaHa} and also \cite[Theorem 2.13]{HRW} for a statement that is geared more closely to our case.}

{A spinal open book structure is induced on the boundary of a 4-manifold equipped with a  nearly Lefschetz fibration.  The mapping torus part of the spinal open book is given by the vertical boundary of the fibration, and the spine is given by its horizontal boundary. This setting shows how the spine may have complicated topology, in general:   when the nearly Lefschetz fibration is given as the complement of a multisection, the connected components of the spine are $S^1$-bundles over the surfaces that are the connected components of the multisection.
(These surfaces themselves are {\em vertebrae} in the terminology of \cite{LVHMW, HRW}.) In general, the components of the spine are not necessarily solid tori. If we want the boundary to have the structure of a spinal open book with multilink binding, as in Definition~\ref{def:spinal}, then we must require that 
every  multisection component be a disk. If the multisection component is a disk with a $d$-fold branched covering of the base, then the corresponding spine component is a solid torus, and each page of the spinal open book cuts $d$ longitudinal curves on its boundary. This can be modeled on a multilink binding component 
as in Definition~\ref{def:spinal}, after deforming the single $d$-fold branch point on the cross-section of the solid torus into $(d-1)$ simple branch points. In what follows, our spinal open books will all be planar,
with a planar page $P$ identified with a disk with holes. The outer boundary component of $P$ 
corresponds to a binding component $\beta_0$ of multipicity 1: 
each parallel longitude is identified with the outer boundary circle of a single page. The holes of the page 
may correspond to binding components of higher multiplicity.}



We will use nearly Lefschetz fibrations on the Milnor fibers of smoothings arising from the de Jong--van Straten construction and on Stein fillings arising similarly from the DJVS arrangements.   As before, we work in a Milnor ball $B= D_x \times D_y$, with the corners smoothed. A DJVS arrangement (or a picture deformation) allows for intersections between the curves at transverse multiple points, so it is not a multisection of the product fibration in $B$.
Blowing up at the marked points $p_j$, we  consider the Lefschetz fibration 
$(D_x \times D_y)  \#_n \cptwobar \to D_x$, where each singular fiber contains a copy of the exceptional sphere. 
(The regular fiber is a disk, and the vanishing cycles are nullhomotopic.)
The strict transforms $\tilde{\Gamma}_i$ are disjoint, so the arrangement now becomes a multisection. 

\begin{theorem} \label{thm:smooth-nearlyLefschetz} Suppose that 
$(\Gamma, p)$ is a DJVS immersed disk arrangement.  
Let the manifold 
$$
W_{(\Gamma, p)} = [(D_x \times D_y)  \#_n \cptwobar] \setminus \cup_i \nu(\tilde{\Gamma}_i),
$$
be the complement of the union of small tubular neighborhoods of the strict transforms $\tilde{\Gamma}_i$ of the  disks~$\Gamma_i$ in the blow-up of $D_x \times D_y$ at all the marked points $p_j \in p$.   
Then $W_{(\Gamma, p)}$ carries a positive allowable nearly Lefschetz
fibration with an exotic fiber for each branch point and one singular fiber for each marked point $p_j$. The vanishing cycle $V_j$ corresponding to the marked 
point $p_j$ encloses the holes in the planar fiber that correspond to the disks $\Gamma_i$ meeting at $p_j$.    In particular, a free marked point on $\Gamma_i$ gives a boundary-parallel vanishing cycle around one of the holes corresponding to the intersection of $\Gamma_i$ with $D_y$.   
\end{theorem}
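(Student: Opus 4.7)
The plan is to build an ambient positive allowable Lefschetz fibration whose base is $D_x$ and which contains $\bigcup_i\tilde\Gamma_i$ as a positive multisection, and then to invoke the Baykur--Hayano theorem \cite[Theorem 2.13]{HRW} to pass to the nearly Lefschetz structure on the complement.

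First, the projection to the first coordinate $\pi_x \colon D_x\times D_y \to D_x$ is a trivial disk bundle. By condition (iv) of Definition~\ref{def:DJVSarrangement}, all marked points lie in the interior of $D_x\times D_y$ and have distinct $x$-coordinates, so blowing up at the marked points produces a positive allowable Lefschetz fibration
$$
\hat\pi_x \colon (D_x\times D_y)\#_n\cptwobar \longrightarrow D_x
$$
with $n$ distinct singular fibres, each the union of the strict transform of a disk fibre and an exceptional $(-1)$-sphere $E_j$ meeting at a node. The vanishing cycle at the critical value $\pi_x(p_j)$ is a small loop $V_j$ around $p_j$ in a nearby regular disk fibre.

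Next, I would verify that $\bigcup_i\tilde\Gamma_i$ is a positive multisection of $\hat\pi_x$. By conditions (ii)--(iii), all intersections and self-intersections of the $\Gamma_i$ occur at marked points and are modelled on transverse intersections of complex lines; blowing up the marked points therefore separates all local branches, making the strict transforms $\tilde\Gamma_i$ pairwise disjoint and transverse to every exceptional sphere $E_j$ (at precisely as many points as there are local branches of the arrangement through $p_j$). Away from the marked points, $\tilde\Gamma_i\to D_x$ agrees with $\pi_x|_{\Gamma_i}$, which is a simple branched cover by (i); by (iv) its branch points have $x$-coordinates distinct from those of the marked points, so every fibre of $\hat\pi_x$ meets at most one ``special'' locus. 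Applying \cite[Theorem 1.1]{BaHa} / \cite[Theorem 2.13]{HRW} to this data then promotes $W_{(\Gamma,p)}$ to a positive allowable nearly Lefschetz fibration with regular fibre a disk with $\sum_i d_i$ holes, one exotic fibre per branch point (locally modelled on Example~\ref{ex:squareroot}), and one Lefschetz singular fibre per marked point inherited from $\hat\pi_x$.

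The remaining work is to identify the vanishing cycles, and this is the step I expect to be the main obstacle because it requires translating the abstract Baykur--Hayano output into the concrete combinatorial description in the statement. The loop $V_j$ is unchanged topologically by the complement construction, so I would analyse its image in the new (punctured) fibre locally near $p_j$. By (ii), the arrangement has exactly $k_j$ smooth branches through $p_j$, where $k_j$ is the total multiplicity of $p_j$ counted across all $\Gamma_i$ meeting there; by (iv), each such branch is a graph over $D_x$ in a neighbourhood of $p_j$. In a regular fibre just to one side of $\pi_x(p_j)$, these $k_j$ branches therefore contribute $k_j$ punctures clustered near $p_j$, and after shrinking $V_j$ it encircles exactly these $k_j$ holes, labelled by the disks $\Gamma_i$ meeting at $p_j$. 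A free marked point corresponds to $k_j=1$, and $V_j$ then encircles a single hole, hence is boundary-parallel in the planar fibre, as claimed.
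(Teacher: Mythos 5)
Your overall route is the same as the paper's (blow up at the marked points, view the strict transforms as a multisection of the resulting fibration over $D_x$, pass to the complement via the Baykur--Hayano mechanism, and identify the vanishing cycles by a local analysis near each $p_j$), but there is one genuine flaw in the way you set it up. You assert that blowing up $D_x\times D_y$ at the marked points produces a \emph{positive allowable} Lefschetz fibration $\hat\pi_x$, and you then invoke \cite[Theorem 1.1]{BaHa} (equivalently \cite[Theorem 2.13]{HRW}) as a black box. This ambient fibration is not allowable: its regular fiber is a disk, and the vanishing cycle at $\pi_x(p_j)$ is a small loop around $p_j$, which is nullhomotopic (in particular not homologically essential) in the disk fiber. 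Since allowability of the ambient fibration is a hypothesis of Baykur--Hayano's theorem, the theorem does not apply as stated to $\hat\pi_x$, and your appeal to it does not go through verbatim; the paper flags exactly this point and instead argues that the \emph{local} model for the complement of the multisection near a branch point agrees with the exotic-fiber model of Example~\ref{ex:squareroot}, so that only the local part of the Baykur--Hayano/HRW argument is used.

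Relatedly, allowability of the \emph{resulting} nearly Lefschetz fibration on $W_{(\Gamma,p)}$ is not inherited from anything upstairs and must be established separately: it follows because, after deleting the tubular neighborhoods of the $\tilde\Gamma_i$, each vanishing cycle $V_j$ encloses the holes corresponding to the branches of the arrangement through $p_j$ (at least one hole, since every intersection is marked and free marked points lie on some $\Gamma_i$), hence becomes homologically essential in the planar fiber. Your final paragraph identifying $V_j$ with the loop around the $k_j$ punctures near $p_j$ is essentially the paper's local-model computation (\cite[Lemma 3.2]{PS}), so once you replace the global invocation of Baykur--Hayano by this local argument and add the allowability observation just described, your proof matches the paper's.
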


\begin{proof} Using the local model calculation of \cite[Lemma 3.2]{PS} in a neighborhood of an intersection point of 
$\Gamma_i$'s, 
we see that the blow-up at the marked point $p_j$ produces a Lefschetz fibration whose singular fiber contains the exceptional sphere. The map is the composition of the blow-down with the projection to $D_x$. The strict transforms of the disks $\Gamma_i$ meeting at the point $p_j$ pass through distinct points of that sphere since the interesection is transverse. The corresponding vanishing cycle in the nearby fiber of this
Lefschetz fibration encloses the intersections of these disks $\Gamma_i$ with the fiber. Thus, after deleting the neighborhood of the strict transforms of the $\Gamma_i$, these vanishing cycles become essential curves in the fiber, so the Lefschetz singularities will be allowable. The strict transforms 
$\tilde{\Gamma}_i$ form multisections of the Lefschetz fibration on $(D_x \times D_y)  \#_n \cptwobar$, so by the arguments of~\cite{HRW, BaHa} the local model for the complement of (the tubular neighborhoods of) $\tilde{\Gamma}_i$ near a branch point agrees with the local model of an exotic fiber. 
Thus, $W_{(\Gamma, p)}$ carries a positive allowable nearly Lefschetz fibration with the exotic fibers corresponding to the branch points.
\end{proof}

\subsection{Compatibility with the canonical contact structure} Lefschetz fibrations as above provide fillings of the link of the singularity with the given decorated germ, but we need to establish compatibility with the contact structure.

\begin{theorem} \label{OB-compatible} Let $(Y, \xi)$ be the contact link of a sandwiched singularity $(X, 0)$ with a decorated germ $(C, w)$. Let $C^t = \cup_i C^t_i$ be a picture deformation of $(C, w)$, and $W$ the Milnor fiber obtained by the DJVS construction.  

Then the contact structure $\xi$ is supported 
by the spinal open book induced on the boundary $\d W=Y$ by the nearly Lefschetz fibration of Theorem~\ref{thm:smooth-nearlyLefschetz} constructed from the picture deformation $C^t$. 
The same is true for any nearly Lefschetz fibration $W_{(\Gamma, p)}$ corresponding to a DJVS arrangement $(\Gamma,p)$,
provided that $(\Gamma,p)$ is compatible with $(C,w)$.

\end{theorem}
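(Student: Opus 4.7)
The plan is to identify the spinal open book on $\partial W$ induced by the nearly Lefschetz fibration (whose base map is $\arg\pi_x$) with the spinal Milnor open book produced in Lemma~\ref{binding-from-function} (built from the holomorphic function $g$), and then invoke Theorem~\ref{MilnorOB}, which guarantees that the latter supports $\xi_{can}$.

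I would first treat the picture deformation case, where $W$ is a Milnor fiber. The nearly Lefschetz fibration on $W$ is induced by $\pi_x = f$ composed with the blow-up morphism at the marked points of $C^t$, while the function $g$ in Lemma~\ref{binding-from-function} is induced by $f$ composed with the augmentation morphism $\alpha$. Although these two blow-ups of $B$ produce non-diffeomorphic 4-manifolds in their interiors, a collar neighborhood of the common boundary $Y$ differs from the analogous collar in $B$ only in tubular neighborhoods of immersed disks whose boundaries in $\partial B$ are isotopic (by $\delta$-constancy of the deformation and our choice of generic coordinates). Under the identification of $Y$ with the boundary of the resolution, the fibration $\arg\pi_x$ agrees with $\arg g$ on the complement of the binding, so the two page fibrations coincide. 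For the binding components themselves: the nearly Lefschetz fibration contributes one component along each $\partial C^t_i$ with multiplicity equal to the degree of the branched cover $\pi_x\colon C^t_i\to D_x$, plus an outer binding of multiplicity one; since $C^t_i$ is a deformation of $C_i$ and no $C_i$ has vertical tangent cone, this degree equals the multiplicity $d_i$ of $C_i$ at $0$. Tracking the augmentation blow-ups as in the proof of Lemma~\ref{binding-from-function}, the boundary of a tubular neighborhood of each curvetta is a longitude of the solid torus associated to the augmented $(-1)$ vertex attached to $E(C_i)$, which after blowing down the augmentation collapses to a fiber of the plumbing structure over $E(C_i)$; similarly the outer binding collapses to a fiber over $E_0$. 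Hence the two spinal open books coincide, and Theorem~\ref{MilnorOB} yields compatibility with $\xi_{can}$.

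For a general DJVS arrangement $(\Gamma,p)$ compatible with $(C,w)$, the hypothesis $\partial \Gamma_i = \partial C_i$, together with the fact that $\pi_x\colon\Gamma_i\to D_x$ is a simple branched cover of degree $d_i$ (again determined by the boundary behavior), ensures that the nearly Lefschetz fibration on $W_{(\Gamma,p)}$ induces the same spinal open book on $Y=\partial W_{(\Gamma,p)}$ as in the picture deformation case. Indeed, the binding, its multiplicities, and the page fibration $\arg\pi_x$ on the complement of the binding depend only on the arrangement's boundary data inside $\partial(D_x\times D_y)$, which is identical for $(\Gamma,p)$ and for any picture deformation of $(C,w)$.

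The main difficulty I anticipate is the careful identification of the two bindings in the picture deformation case. One must compare the blow-up at marked points of $C^t$ with the augmentation blow-ups that produce $\tilde X$ in a collar neighborhood of $Y$, and verify that away from the singular point at $0$ these two different sequences of complex blow-ups produce the same decorated plumbing fibration on the shared boundary $Y$; everything else then follows from the compatibility of Milnor open books with the canonical contact structure established in Theorem~\ref{MilnorOB}.
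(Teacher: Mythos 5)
Your overall strategy -- match the open book coming from the nearly Lefschetz fibration with the Milnor open book of Lemma~\ref{binding-from-function} and then quote Theorem~\ref{MilnorOB} -- is the same as the paper's, but there is a genuine gap at the central step. You assert that ``under the identification of $Y$ with the boundary of the resolution, the fibration $\arg\pi_x$ agrees with $\arg g$ on the complement of the binding, so the two page fibrations coincide.'' There is no canonical identification under which this could be checked: the Milnor fiber $W$ (blow-up of $B$ at the marked points of $C^t$, minus the strict transforms $\tilde C^t_i$) and the resolution $\tilde X$ (blow-up of $B$ at $0$ and infinitely near points, minus the curvettas $\tilde C_i$) are different $4$-manifolds whose boundaries are only abstractly diffeomorphic, and producing a diffeomorphism that intertwines $\arg\pi_x$ with $\arg g$ is essentially the content of the theorem, not something that follows from the boundaries $\partial C^t_i$ being isotopic to $\partial C_i$. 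The paper never proves such a direct agreement of fibrations. Instead it proves much less -- that the two \emph{bindings} agree, as links with multiplicities, with a specified collection of fibers of the plumbing structure (Lemma~\ref{binding-from-fibration}, whose proof goes through the cap construction, the self-intersection computation \eqref{eq:Sigma-self-int}, and Kirby-calculus handle cancellation to pin down the identification of $\partial W$ with the plumbing boundary) -- and then upgrades ``same binding'' to ``same open book up to isotopy'' via Lemma~\ref{OBisotopy}. That lemma is exactly the ingredient your argument is missing: it uses $b_1(Y)=0$ together with the Stallings--Waldhausen--Thurston classification of fibrations by the relative homology class of the fiber, and without it there is no reason two fibrations of $Y\setminus\nu(\beta)$ with the same boundary behavior should be isotopic.

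A secondary issue: in the general DJVS case you claim the induced spinal open book ``depends only on the arrangement's boundary data inside $\partial(D_x\times D_y)$.'' The spine components are boundaries of tubular neighborhoods of the strict transforms $\tilde\Gamma_i$, and their solid-torus structure (which curve is a meridian, i.e.\ the framing) depends on interior data -- the self-intersections $\tilde\Gamma_i\cdot\tilde\Gamma_i$, hence on the weights via \eqref{eq:Sigma-self-int} -- not just on $\partial\Gamma_i$. This is precisely why compatibility with $(C,w)$ requires matching weights, and why the paper routes the comparison through the cap and the homological uniqueness of Lemma~\ref{OBisotopy} rather than through a purely boundary-local argument. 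Your tracking of the binding components through the augmentation blow-downs is in the right spirit (it parallels the end of the proof of Lemma~\ref{binding-from-fibration}), but as written it conflates curves living in different $4$-manifolds and does not substitute for these two steps.
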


To prove Theorem~\ref{OB-compatible}, we identify the binding components of the spinal open book induced by the Lefschetz fibration as a union of fibers of the plumbing structure of the link $Y$. Theorem~\ref{OB-compatible}  follows from Lemma~\ref{binding-from-fibration}  and Lemma~\ref{binding-from-function},  together with the fact that the isotopy class of a spinal open book {with multilink binding} on a rational homology sphere is uniquely determined by its {multilink} binding (Lemma~\ref{OBisotopy}).

\begin{lemma}\label{binding-from-fibration} Fix a decorated germ $C=\cup_i C_i$ encoding $(X, 0)$, let $(\Gamma_i, p)$ be a DJVS disk arrangement compatible with $(C, w)$. Consider a nearly Lefschetz fibration on 
$W=W_{(\Gamma, p)}$ as in Theorem~\ref{thm:smooth-nearlyLefschetz}. Then the binding $\beta$ of the induced spinal open book on $Y = \partial W$ is given by the union of $S^1$-fibers of the plumbing structure on $Y$, so that  
$
\beta= \cup_{i=0}^m \beta_i$,  where  $\beta_i$ is a fiber of the  $S^1$-bundle over $E(C_i)$, $i>0$, and $\beta_0$ is a fiber over $E_0$.
For $i>0$, the degree of the {multilink} binding component $\beta_i$ equals the multiplicity of $C_i$ at $0$, while $\beta_0$ has degree 1 and corresponds to the outer boundary component of the page. 
\end{lemma}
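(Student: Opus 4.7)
The plan for Lemma~\ref{binding-from-fibration} is to (i) decompose the horizontal boundary $\d_{hor}W$ into solid tori whose cores realize the binding, (ii) read off multiplicities from a generic page, and (iii) identify each binding circle with a fiber of the plumbing on $Y$ by tracing the construction through the embedded resolution of the decorated germ $(C,w)$.

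Abbreviate $\tilde B := (D_x\times D_y)\#_n\cptwobar$ for the blow-up at the marked points, so $W = \tilde B\setminus\bigcup_i\nu(\tilde\Gamma_i)$ as in Theorem~\ref{thm:smooth-nearlyLefschetz}. Then $\d_{hor}W = T_0\sqcup T_1\sqcup\cdots\sqcup T_m$ with $T_0 = D_x\times\d D_y$ the outer horizontal boundary of $\tilde B$ and $T_i = \tilde\Gamma_i\times\d D^2$ the interior-facing part of $\d\nu(\tilde\Gamma_i)$ for $i\geq 1$. A generic page $\pi^{-1}(\theta)\cap W$ is $\{\theta\}\times D_y$ with one small disk removed around each of the $d_i$ preimages of $\theta$ under $\pi_x|_{\tilde\Gamma_i}$, so its boundary is one longitude of $\d T_0$ together with $d_i$ parallel longitudes of $\d T_i$ for each $i$. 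Hence $\beta_0$ is the core of $T_0$ with multiplicity $1$ and $\beta_i$ is the core of $T_i$ with multiplicity $d_i$; by compatibility $\d\Gamma_i=\d C_i$, so $d_i$ equals the multiplicity of $C_i$ at the origin as in the proof of Lemma~\ref{binding-from-function}. The circle $\beta_0 = \{0\}\times\d D_y$ is the boundary of the proper transform of $\{x=0\}$ under the very first blow-up of $B$ at the origin; for generic coordinates no subsequent blow-up in the sandwich augmentation lies on this proper transform, so $\beta_0$ persists in $Y$ as a meridian of the first exceptional sphere $E_0$, hence a fiber of the $S^1$-bundle over $E_0$ in the plumbing.

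For $\beta_i$, I plan to slide the core of $T_i$ within $T_i$ to the isotopic longitude $\{q\}\times\d D^2$ at a boundary point $q\in K_i := \d\tilde\Gamma_i$; this curve sits on the common torus $\d\nu(K_i)\subset\d\tilde B = S^3$ and is a meridian of $K_i$. Boundary compatibility gives $\d\tilde C_i = K_i$, so the same circle is realized in the embedded resolution of $(C,w)$ as a longitude of $\nu(\tilde C_i)$. In the embedded resolution, $\tilde C_i$ is a fiber of the disk bundle over the $(-1)$-curve of $G'$ attached to $E(C_i)$, so this meridian is a fiber of the $S^1$-bundle over that $(-1)$-vertex in the $G'$-plumbing on $Y$. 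Blowing down the $(-1)$-curve absorbs its plumbing piece into the $E(C_i)$-piece and identifies the fiber with one over $E(C_i)$ in the $G$-plumbing, completing the identification.

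The main obstacle will be this last step: justifying that $\beta_i$, defined via the disk $\tilde\Gamma_i\subset\tilde B$, represents the same isotopy class in $Y$ as the meridian determined by $\tilde C_i$ in the embedded resolution, even though $\tilde\Gamma_i$ and $\tilde C_i$ live in different blow-ups of $B$. The justification, implicit in the cap construction of Section~\ref{sandwich-setup}, is that in both pictures $Y$ is obtained from $S^3$ by Dehn surgery on $\bigcup_i K_i$ along framings determined by the self-intersections $\tilde\Sigma_i\cdot\tilde\Sigma_i = -w_i - 2\delta_i$; this surgery description depends only on the compatibility data $(C,w)$, so the dual knot of $K_i$---equivalently $\beta_i$ in either picture---is a canonically defined isotopy class in $Y$.
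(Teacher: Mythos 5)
Your setup (the decomposition of $\d_{hor}W$ into solid tori, the count of $d_i$ longitudes on a page, and the reduction of $\beta_i$ to a meridian of $K_i=\d\Gamma_i$ on the common torus) matches the paper, but the key identification step contains a genuine error. You claim that this meridian of $K_i=\d\ti C_i$ ``is a fiber of the $S^1$-bundle over the $(-1)$-vertex,'' and that blowing down then turns such fibers into fibers over $E(C_i)$. Both claims fail. The circle fibers of the plumbing piece over the auxiliary $(-1)$-curve are the boundaries of nearby disk fibers, i.e.\ \emph{parallel longitudinal} copies of $K_i$ itself, not meridians of $K_i$; on the common torus the meridian and the fiber-framed longitude intersect once, so they are not interchangeable. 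Worse, after $\nu(\ti C_i)$ is removed, a parallel copy of the curvetta disk lies inside the new boundary solid torus, so those fibers over the $(-1)$-vertex actually bound disks in $Y$; if $\beta_i$ were such a fiber it would be null-isotopic, contradicting the conclusion whenever the fiber over $E(C_i)$ is essential. Likewise, fibers of two adjacent pieces of a plumbing structure are never isotopic across the plumbing torus (their classes have intersection number $\pm1$), so ``absorbing the $(-1)$-piece identifies its fibers with fibers over $E(C_i)$'' is not a valid mechanism. The correct mechanism, which is the content of the paper's proof, is a handle/Kirby computation: $\beta_i$ is the meridian of the $0$-framed circle dual to the $(-1)$-framed unknot; sliding it over the $(-1)$-framed handle and then cancelling the $0/(-1)$ pair converts it into a meridian of the $E(C_i)$-unknot, i.e.\ a fiber over $E(C_i)$ (Figure~\ref{fig:capkirby}).

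Your proposed fix for the obstacle you flag (``both pictures give $Y$ as surgery on the same framed link, so the dual knots are canonical'') does not close this gap either. Granting that the cap gives the same surgery presentation on both sides, the lemma still requires locating the dual knots relative to the \emph{intrinsic} plumbing structure of $Y$, and that is exactly the computation you replaced by the faulty fiber identification. The paper does this by first exhibiting the cap itself as a plumbing: it anti-blows-up $(B',C')$ to an embedded resolution, uses $\ti\Sigma_i\cdot\ti\Sigma_i=-w_i-2\delta_i$ to show the resulting spheres have square $0$, and thereby presents $\d U=Y$ by the augmented resolution diagram with extra $0$-framed meridians; only then does the slide-and-cancel argument identify $\beta_i$ with a fiber over $E(C_i)$ and $\beta_0$ (via the strict transform of $\{x=0\}$) with a fiber over $E_0$. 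Your multiplicity statement and the spirit of the $\beta_0$ argument are fine, but without the cap-as-plumbing identification and the handle calculus your proof of the main identification does not go through.
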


\begin{proof}

To relate the boundary of the Milnor fibers and fillings to the plumbing structure, we will cap them off by a cap obtained from the resolution. Consider the cap $U$ as in Section~\ref{sandwich-setup}, built by gluing neighborhoods of the strict transforms of the $\Gamma_i$ as $2$-handles to a ball $({B}', {{C}}')$ attached along the boundary of $C_i'$. The union of the proper transform of $\Gamma_i$ with $C_i$ is a sphere $\tilde{\Sigma}_i$ which has at most one singularity at the origin in $B'$ corresponding to a potential singularity of $C_i'$.
Next, we would like to blow up at the origin in $B'$ and infinitely near points to find an embedded resolution of the germ $C'$ encoding the dual resolution graph of the original singularity $(X, 0)$ with the added $(-1)$ vertices, as at the start of Section~\ref{sandwich-setup}.  \op(This part of the argument is similar to \cite[Lemma 4.2.5]{NPP}.)
 Because of the orientation reversal in $(B', C')$, we have to reverse the orientations throughout and perform anti-blow-ups. We then get  $\#_{i=1}^l \cptwo \#_{i=1}^n \cptwobar$ with smoothly embedded spheres 
$\overset{\scriptscriptstyle\approx}{\Sigma}_i$ formed by the strict transforms of $\Gamma_i$ and $C_i$ (with the orientation reversal on the part corresponding to $B'$). We claim that 
$\overset{\scriptscriptstyle\approx}{\Sigma}_i \cdot \overset{\scriptscriptstyle\approx}{\Sigma}_i =0$. Indeed, in the complex setting the self-intersection of a curve drops by $m^2$ when blowing up a point of multiplicity $m$. This means that 
when we blow up the germ $C$ to produce the required embedded resolution, the self-intersection of the relative cycle corresponding to $C_i$ drops by $\sum_j w^2_{i,j}$, where $w_{i,j}$'s are the multiplicities of the infinitely near points on the strict transforms as in Section~\ref{sandwich-setup}. For each $i$, $\delta_i=\delta(C_i)$ can be expressed via the (non-zero) quantities $w_{i,j}$  
as 
$$
\delta_i = \sum_j \frac{w_{i,j}(w_{i,j}-1)}2. 
$$
Since $w_i = \sum_ j w_{i,j}$, it follows that the self-intersection of $C_i$ under complex blow-ups will drop by $w_i + 2 \delta_i$. Under the orientation reversal and anti-blow-ups, self-intersections {\em increase} by the corresponding quantities; by~\eqref{eq:Sigma-self-int}, it 
follows that 
$$
\overset{\scriptscriptstyle\approx}{\Sigma}_i \cdot \overset{\scriptscriptstyle\approx}{\Sigma}_i = {\ti{\Sigma}}_i \cdot {\ti{\Sigma}}_i + w_i + 2 \delta_i =0.
$$

Now we can describe the cap directly as a plumbing of disk bundles over spheres: it is given by the plumbing corresponding to the resolution with the additional disk bundles of Euler number $(-1)$ over spheres, as dictated by the augmented resolution graph, and the disk bundles of Euler number 0 over spheres $\overset{\scriptscriptstyle\approx}{\Sigma}_i$. The disk bundle over $\overset{\scriptscriptstyle\approx}{\Sigma}_i$ is plumbed on the $(-1)$ disk bundle that corresponds to the $(-1)$ vertex of the augmented graph adjacent to the curvetta $\ti{C}_i$.  See Figure~\ref{fig:capkirby}.
In the 3-manifold, the surgery induced by the $0$-framed handles cancels that of the $(-1)$-framed handles. Thus, we obtain an identification of the boundary of the cap (and thus the boundary of the Milnor fibers and fillings) with the boundary of the original plumbing from the resolution. 

\begin{figure}
	\centering
	\includegraphics[scale=.5]{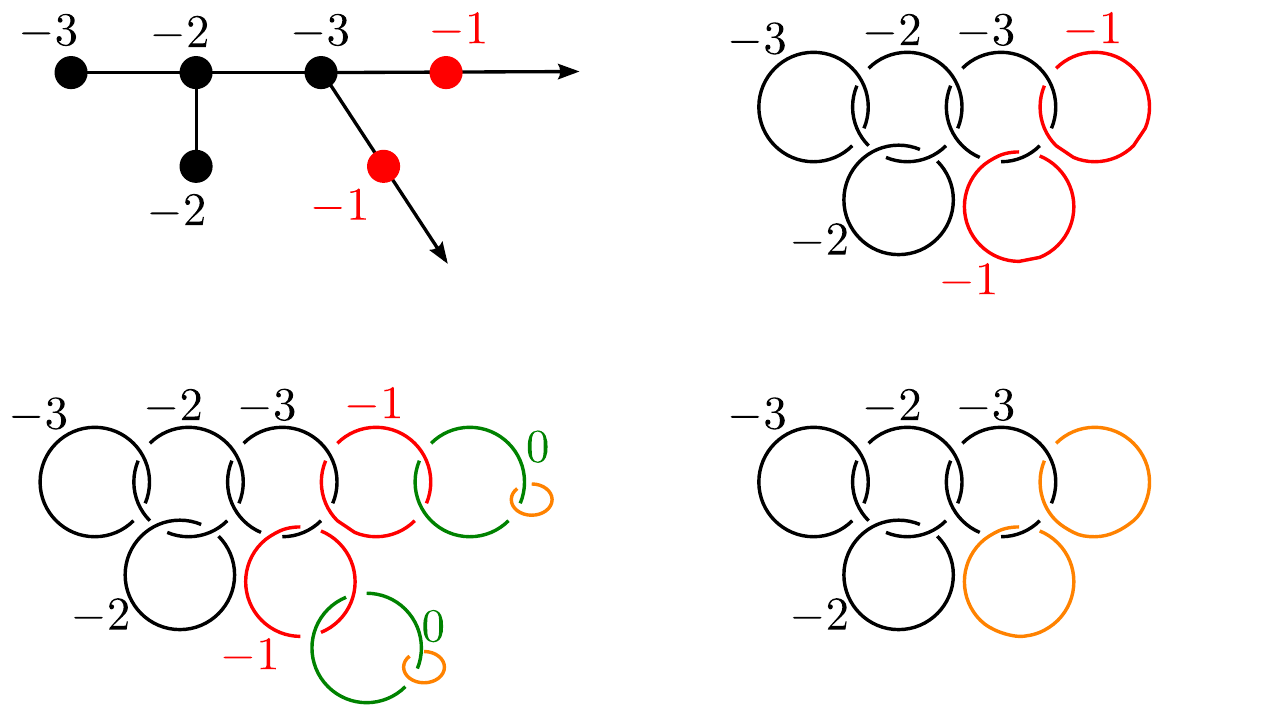}
	\caption{\textbf{Top left:} an augmentation of the plumbing graph for a normal crossing resolution of a sandwiched singularity. \textbf{Top right:} its corresponding Kirby diagram. \textbf{Bottom left:} the cap with additional disk bundles of Euler number $0$ over spheres $\ti{\ti{\Sigma}}_i$ together with meridian curves of $\ti{\ti{\Sigma}}_i$ corresponding to the markings.\\ \textbf{Bottom right:} the result of canceling the $0$-surgery with the $(-1)$ surgery, tracking the images of the markings.}
	\label{fig:capkirby}
\end{figure}

To finish the proof of the lemma, we examine the binding of the spinal open book on the boundary of the nearly Lefschetz fibration on $W$. The page of the open book is a disk with holes, where the holes correspond to the curvettas. Suppose the curvetta $C_i$ has multiplicity $d_i$, then a regular fiber of the fibration has $d_i$ boundary components corresponding to $C_i$. These $d_i$ circles all come together at one component of the spine: the spine component in $Y= \d W$ is a solid torus whose core disk is a branched cover of degree $d_i$  of the base $D_x$ of the nearly Lefschetz fibration. Since we start with the Lefschetz fibration $\alpha: B  \#_{i=1}^n \cptwobar \to D_x$ and remove the tubular neighborhoods $\nu(\Gamma_i)$ to obtain $W$, each of the $d_i$ copies of the binding arises as the boundary of a meridional disk of $\nu(\Gamma_i)$, or, viewed on the side of the cap $U$, the boundary of a co-core disk of the $0$-framed handle that corresponds to $\ti{\Sigma}_i$. In the surgery diagram for $Y= \d U$, the binding component $\beta_i$ is then the meridian of the $0$-framed circle as in Figure~\ref{fig:capkirby}. Sliding it over the $(-1)$-framed handle and canceling the $0$-handle with the $(-1)$-handle, we see that $\beta_i$ is isotopic to a fiber in the $S^1$-bundle over $E(C_i)$, as required. This takes care of the binding components that correspond to the boundaries of the holes of the disk page. The outer boundary component $\beta_0$ is the boundary of the disk page of the open book on  $B  \#_{i=1}^n \cptwobar$, before the curvetta removal. Then, $\beta_0$ is identified with the intersection of $\partial W$ with the locus 
$\{x=0\}$. In the blow-up process, the strict transform of $\{x=0\}$ intersects the first exceptional divisor $E_0$ transversely and is disjoint from the strict transforms of the $C_i$, which shows that $\beta_0$ is a fiber over $E_0$.  
\end{proof}

	\begin{lemma} \label{OBisotopy} Suppose that $Y$ is a closed oriented $3$-manifold with $b_1(Y)=0$. A 
	{spinal open book with multilink binding} on $Y$ is determined, up to isotopy,  by the isotopy class of its binding (as an oriented link) together with the multiplicities of the binding components. 
\end{lemma}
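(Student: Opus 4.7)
The plan is to translate the isotopy problem into cohomology, use $b_1(Y)=0$ to pin down the homotopy class of the fibration on the binding complement $N := Y \setminus \nu(\beta)$, and then upgrade the resulting homotopy to a genuine isotopy via the uniqueness of $3$-manifold fibrations in a fixed cohomology class. After an initial ambient isotopy identifying the oriented bindings (with their multiplicities), the restriction of each $\pi_j$ to a boundary torus $T_i = \partial\nu(\beta_i)$ is determined up to isotopy by the binding data: longitudes of $T_i$ are boundary circles of pages, while meridians cover the base $S^1$ with degree $d_i$. Hence we may further isotope so that $\pi_1|_{\partial N} = \pi_2|_{\partial N}$, and it suffices to show that two such fibrations are isotopic rel $\partial N$.

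The heart of the argument is to prove that the restriction map
\begin{equation*}
r \colon H^1(N;\Z) \longrightarrow H^1(\partial N;\Z)
\end{equation*}
is injective, so that the class $[\pi_j] \in [N,S^1] = H^1(N;\Z)$ is pinned down by the common boundary value $\sum_i d_i \mu_i^* \in H^1(\partial N)$. From the long exact sequence
\begin{equation*}
H^0(N) \to H^0(\partial N) \to H^1(N,\partial N) \to H^1(N) \xrightarrow{r} H^1(\partial N),
\end{equation*}
and the fact that $N$ is connected with $c$ torus boundary components, the image of $H^0(N)=\Z$ in $H^0(\partial N)=\Z^c$ is the diagonal, so the image of $H^0(\partial N)$ in $H^1(N,\partial N)$ has rank $c-1$. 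By Poincar\'e--Lefschetz duality, $H^1(N,\partial N)\cong H_2(N)$. Mayer--Vietoris for $Y=N\cup \nu(\beta)$, together with $H_2(Y)=0$ (which uses $b_1(Y)=0$) and the observation that the connecting map $H_3(Y)\to H_2(\partial N)$ sends $[Y]$ to $\sum_i[T_i]$, yields $H_2(N)\cong \Z^{c-1}$. Thus $H^0(\partial N)\to H^1(N,\partial N)$ is an injection of free abelian groups of equal rank $c-1$, and so has finite cokernel. By exactness the image of $H^1(N,\partial N)$ in $H^1(N)$ is isomorphic to this cokernel and hence is torsion; since $H^1(N)=\mathrm{Hom}(H_1(N),\Z)$ is torsion-free, this image must vanish, proving $r$ is injective and $[\pi_1]=[\pi_2]$.

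Once the two fibrations represent the same class in $H^1(N)$, they are homotopic as maps to $S^1$; I would then invoke the uniqueness of fibrations of a compact $3$-manifold over $S^1$ within a fixed cohomology class, following Thurston and Waldhausen (the fiber is a Thurston-norm minimizer in its class, and such minimizers are unique up to isotopy, with compatible boundary behavior already arranged). This produces an ambient isotopy of $N$ rel $\partial N$ taking $\pi_1$ to $\pi_2$, and hence an isotopy of spinal open book structures on $Y$. The main obstacle is the homological computation in the second step: the injectivity of $r$ relies crucially on $b_1(Y)=0$, without which $H_2(N)$ would carry extra rank and could support distinct spinal open book structures with identical bindings and multiplicities.
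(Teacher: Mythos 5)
Your overall strategy---reduce to the fibrations on $N=Y\setminus\nu(\beta)$, use $b_1(Y)=0$ to pin down the (co)homology class of the fibration, then invoke the Stallings--Waldhausen--Thurston uniqueness of fibrations in a fixed class---is the same as the paper's, which works with the Poincar\'e-dual page class in $H_2(N,\partial N;\R)$ instead of $H^1(N;\Z)$, and your computation that $r\colon H^1(N;\Z)\to H^1(\partial N;\Z)$ is injective is correct. The genuine gap is in the normalization step on which everything else rests: you assert that $\pi_j|_{T_i}$ is ``determined up to isotopy by the binding data'' because pages meet $T_i$ along longitudes and the meridian maps to the base with degree $d_i$. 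But a longitude of $T_i=\partial\nu(\beta_i)$ is not canonical: the definition of a spinal open book only asks that the page boundary consist of parallel copies of some curve isotopic to the core of the solid torus, so two spinal open books with the same binding and multiplicities could a priori have page boundaries equal to $\lambda$ and to $\lambda+k\mu$ with $k\neq 0$, whose restricted classes in $H^1(T_i;\Z)$, namely $d_i\mu^*$ and $d_i\mu^*-k d_i\lambda^*$, are different. Hence you cannot arrange $\pi_1|_{\partial N}=\pi_2|_{\partial N}$ before proving the lemma; the agreement of these boundary framings is part of the conclusion and is exactly where the global homology of $Y$ must enter. Injectivity of $r$ does not help, because it presupposes that the boundary values already agree.

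The gap is fixable inside your framework, and the fix is essentially the paper's argument in dual form: the only a priori known values of $[\pi_j]$ are its pairings with the meridians $\mu_i$, which equal $d_i$, so one should show these alone determine the class. By excision, $H_2(Y,N;\Q)\cong \bigoplus_i H_2(\nu(\beta_i),T_i;\Q)$ is generated by the meridian disks, and since $H_1(Y;\Q)=0$ the boundary map of the pair $(Y,N)$ shows the meridians span $H_1(N;\Q)$ (in fact form a basis, as $b_1(N)=c$). Therefore an integral class in the torsion-free group $H^1(N;\Z)$ is determined by its values on the meridians, giving $[\pi_1]=[\pi_2]$ directly---and, a posteriori, equal boundary restrictions and hence equal page framings. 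The paper phrases this as: the page class lies in $H_2(N,\partial N;\R)\cong H_2(Y,\beta;\R)\cong H_1(\beta;\R)$ (the last isomorphism using $b_1(Y)=0$), so it is determined by the binding taken with multiplicities, with no reference to a choice of longitude. One smaller point: the uniqueness statement you need is that two fibrations of $N$ over $S^1$ representing the same cohomology class are isotopic (Thurston; see the exposition cited in the paper), not that Thurston-norm minimizers in a class are unique up to isotopy---the latter is false in general, although it is true for fibers of fibrations in the same class.
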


\begin{proof}	The case of classical open books was considered in~\cite{CPP}, and their proof easily adapts to our case. A (spinal) open book with spine $\beta$ gives a fibration on the manifold with boundary $Y \setminus \nu(\beta)$; the fibration is transverse to the boundary. If these fibrations are isotopic on  $Y \setminus \nu(\beta)$  for two given open books on $Y$ with binding $\beta$, then the open books are isotopic. By the classical results on foliations and fibrations due to Stallings, Waldhausen, and Thurston 
(see for example \cite[Theorem 3.1]{CantwellConlon} for an exposition based on~\cite{Thurston} of the case we need), a fibration with fiber $F$ on $M = Y \setminus \nu(\beta)$ is determined, up to isotopy, by the homology class 
$[F, \partial F]$ in $H_2(M, \partial M; \R)$. But $H_2(M, \d M; \R) \cong H_2(Y, \beta; \R)$ by excision, and the isomorphism from the long exact sequence of a pair
$H_2(Y, \beta; \R)\cong H_1(\beta; \R)$ for the rational homology sphere $Y$ tells us that a class of a surface in $H_2(M, \d M; \R)$ is determined by  the class of its boundary, which in turn is given by the class of the binding taken with multiplicities of its components. 
\end{proof}

\subsection{Nearly Lefschetz fibrations give Stein fillings}

The results of \cite[Section 5]{HRW} give a Stein structure on a manifold equipped with a nearly Lefschetz fibration as above, producing a Stein filling for the spinal open book on the boundary.

\begin{theorem} \label{thm:Steinfill} Under the hypotheses of Theorem~\ref{thm:smooth-nearlyLefschetz}, the nearly Lefschetz fibration 
$$
W_{(\Gamma, p)}= [(D_x \times D_y)  \#_n \cptwobar] \setminus \cup_i \nu(\tilde{\Gamma}_i)
$$
can be given a Stein structure that makes $W_{(\Gamma, p)}$ a Stein filling of the contact structure supported by the spinal open book on $\d W_{(\Gamma, p)}$.  
\end{theorem}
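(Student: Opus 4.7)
The plan is to invoke directly the Stein (in fact Weinstein) existence theorem for PANLFs given in \cite[Section~5]{HRW}, which extends the classical Loi--Piergallini / Akbulut--Ozbagci construction across exotic fibers. By Theorem~\ref{thm:smooth-nearlyLefschetz}, the manifold $W_{(\Gamma,p)}$ already carries a PANLF over $D^2$ with planar fiber, so the remaining task is to verify the hypotheses of the HRW theorem and to identify the induced boundary contact structure with the one supported by the spinal open book.

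First I would verify positivity and allowability of the PANLF. Positivity of the Lefschetz critical points is automatic from the holomorphic local model $(z_1,z_2)\mapsto z_1^2+z_2^2$ produced by blowing up the marked points $p_j$, and positivity of the exotic fibers follows from the local model of Example~\ref{ex:squareroot} that appears near the branch points of the multisections $\tilde{\Gamma}_i$ (as proved in Theorem~\ref{thm:smooth-nearlyLefschetz}). For allowability, Theorem~\ref{thm:smooth-nearlyLefschetz} identifies each vanishing cycle $V_j$ as a simple closed curve enclosing exactly those inner boundary components of the planar fiber that correspond to the disks $\Gamma_i$ passing through $p_j$; since at least one $\Gamma_i$ passes through $p_j$ and the outer boundary of the fiber always remains on the opposite side of $V_j$, the curve $V_j$ separates a non-empty proper subcollection of boundary components from the rest and is therefore non-nullhomotopic, which is the appropriate essentiality condition in the planar setting. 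The free marked point case, in which $V_j$ is parallel to a single inner boundary circle corresponding to an intersection $\Gamma_i\cap D_y$, is handled identically.

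Next, the HRW construction produces a Weinstein structure on $W_{(\Gamma,p)}$ adapted to the PANLF: over a regular fiber neighborhood one uses the standard Stein structure on a disk with holes times $D^2$, attaches a Weinstein $2$-handle along the Legendrian lift of each vanishing cycle $V_j$ with fiber framing $-1$, and adds handles encoding the exotic fibers as in \cite[Section~5]{HRW}. By that construction, the induced contact structure on $\partial W_{(\Gamma,p)}$ is precisely the one supported by the spinal open book induced on the boundary by the PANLF (see Remark~\ref{rmk:inducedspinal}), so $W_{(\Gamma,p)}$ is a Stein filling of this spinal open book, which is the claim of Theorem~\ref{thm:Steinfill}.

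The main obstacle — extending the Weinstein handle decomposition across the exotic fibers and showing the resulting boundary contact structure matches the supported one — is precisely the content of \cite[Section~5]{HRW}. Once the PANLF from Theorem~\ref{thm:smooth-nearlyLefschetz} is in place and positivity and allowability are verified, Theorem~\ref{thm:Steinfill} follows from their theorem essentially verbatim.
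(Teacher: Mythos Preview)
Your approach is essentially the same as the paper's: invoke the Stein existence result of \cite[Section~5]{HRW} (specifically Theorem~E there) once the PANLF structure from Theorem~\ref{thm:smooth-nearlyLefschetz} is in hand. One point you should add: the hypothesis of \cite[Theorem~E]{HRW} that the paper singles out is not positivity/allowability (those are already packaged into Theorem~\ref{thm:smooth-nearlyLefschetz}, so your re-verification is redundant) but rather that at least one spine component has multiplicity~$1$ with no branch points; here this holds because the monodromy fixes the outer boundary $\partial D_y$ of the planar page, so the corresponding binding component has $d_0=1$. Also, your sketch of the HRW construction (handles for exotic fibers) differs from how the paper summarizes it---HRW first isotopes the multisection to be genuinely holomorphic over a disk containing all exotic fibers, uses classical Stein theory for the complex curve complement there, and only then attaches Weinstein $2$-handles for the Lefschetz vanishing cycles---but since both descriptions are meant as pointers to the cited result, this is not a substantive discrepancy.
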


Theorem~\ref{thm:Steinfill} follows immediately from~\cite[Theorem E]{HRW}, which applies because  the monodromy fixes the outer boundary component of the planar page and thus the corresponding spine component has multiplicity 1 and no branch points. The Stein structure in the complement of multisections is constructed in~\cite[Theorem E]{HRW} as follows: after an isotopy, one can assume that the multisection is represented by a complex curve over a small disk containing the projections of all exotic fibers but no Lefschetz singularities. The complement of a complex curve is a Stein domain by classical results, and one checks that the contact structure induced on the boundary by a plurisubharmonic function is supported by the spinal open book. Lastly, the Stein structure is extended over the Weinstein handles corresponding to the vanishing cycles. 

The next corollary is Part (1) of Theorem~\ref{thm:maintechnical}:

\begin{cor}  If the DJVS arrangement $(\Gamma, p)$ is compactible with  the decorated germ 
$(C, w)$,  then Theorem~\ref{thm:Steinfill} gives a Stein filling of the contact link $(Y, \xi_{can})$ of the link of the sandwiched singularity with the decorated germ $(C, w)$.      
\end{cor}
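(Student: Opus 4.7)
The plan is to chain together the results already developed in this section. First, I apply Theorem~\ref{thm:smooth-nearlyLefschetz} to the compatible DJVS arrangement $(\Gamma, p)$, which gives $W_{(\Gamma, p)}$ the structure of a positive allowable nearly Lefschetz fibration over $D_x$ with planar fiber. Next, Theorem~\ref{thm:Steinfill} (i.e.~\cite[Theorem E]{HRW}) promotes this fibration to a Stein structure on $W_{(\Gamma, p)}$ whose induced contact structure on $\partial W_{(\Gamma, p)}$ is, by construction, supported by the spinal open book coming from the vertical boundary of the fibration.

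The only substantive remaining step is to identify this boundary contact structure with the canonical contact structure $\xi_{can}$ on $Y$. This identification is exactly Theorem~\ref{OB-compatible}, whose hypothesis is precisely the compatibility of $(\Gamma, p)$ with $(C, w)$ that we are assuming. The underlying mechanism is already fully in place: Lemma~\ref{binding-from-fibration} identifies the binding of the spinal open book on $\partial W_{(\Gamma, p)}$ with a specific union of plumbing fibers determined by $(C,w)$ (with the multiplicities dictated by the multiplicities of the curvettas $C_i$ at $0$), while Lemma~\ref{binding-from-function} independently produces a spinal Milnor open book supporting $\xi_{can}$ on $Y$ with the same binding data. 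Since $Y$ is a rational homology sphere, Lemma~\ref{OBisotopy} then forces these two spinal open books to be isotopic, so the contact structure supported by the former must coincide with $\xi_{can}$.

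Combining these two observations, $W_{(\Gamma, p)}$ with the Stein structure of Theorem~\ref{thm:Steinfill} is a Stein filling of $(Y, \xi_{can})$, which is the content of part (1) of Theorem~\ref{thm:maintechnical}. There is no real obstacle in this step; the hard work is all contained in Theorem~\ref{OB-compatible} and in~\cite[Theorem E]{HRW}, and the corollary is simply their juxtaposition under the compatibility assumption on $(\Gamma, p)$ and $(C, w)$.
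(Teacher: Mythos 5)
Your argument is correct and matches the paper's intended reasoning: the corollary is exactly the juxtaposition of Theorem~\ref{thm:smooth-nearlyLefschetz}, Theorem~\ref{thm:Steinfill}, and Theorem~\ref{OB-compatible} (the latter resting on Lemmas~\ref{binding-from-fibration}, \ref{binding-from-function}, and~\ref{OBisotopy}), which is why the paper states it without further proof. Nothing is missing; your chain of references is the same as the paper's.
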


\subsection{All symplectic fillings arise from DJVS arrangements} 
 
{To prove Part (2) of Theorem~\ref{thm:maintechnical}, we will use the key structural results of~\cite{HRW}: {\em all} 
strong symplectic fillings of a planar spinal contact 3-manifold arise as 
compatible nearly Lefschetz fibrations inducing the fixed planar spinal open book on their boundary.} {Building on~\cite{LVHMW, BaHa2}, they prove the following.}

{\begin{theorem}[\cite{HRW} Theorem A] \label{thm:MRWmultisectioncomp}
	Let $(M,\xi)$ be a contact 3-manifold supported by a planar spinal open book and $(W,\omega)$ a minimal strong symplectic filling of $(M,\xi)$. Then $(W,\omega)$ is symplectic deformation equivalent to the complement of a neighborhood of a positive multisection in a Lefschetz fibration.
\end{theorem}}

{In our case, the spinal open book has multilink binding. The page $P$ of the open book is a disk with holes. Under this set-up, we have a more concrete topological description.} 

{\begin{theorem}\label{thm:nearlyL-sphere}
Let $(Y, \xi)$ be a contact 3-manifold supported by a planar spinal 
open book with multilink binding, such that there is at least one binding component of multiplicity one. 
Then each minimal strong symplectic filling arises as the complement of a tubular neighborhood multisection in a Lefschetz fibration
$(D^2 \times D^2) \# n \cptwobar  \to D^2$ for some $n\geq 0$. Connected components of 
the multisection are in bijective correspondence with the multilink binding components, with matching degrees/multiplicities.  Each connected component of the multisection is a disk.
\end{theorem}
}

\begin{proof}
{Let $(W,\omega)$ be a minimal strong symplectic filling of $(Y,\xi)$. By Theorem~\ref{thm:MRWmultisectioncomp}, $(W,\omega)$ is symplectic deformation equivalent to the complement of a neighborhood of a multisection in a Lefschetz fibration, such that the induced boundary is our given planar spinal open book with multilink binding. In particular, the Lefschetz fibration is over a disk. The components of the multisection are in bijective correspondence with the vertebrae of the spinal open book, and thus are disks because of the multilink condition. 
	
The fiber of the Lefschetz fibration is obtained from the fiber of the nearly Lefschetz fibration by filling in the boundary components which correspond to multisection components. In our setting, one of the binding components has multiplicity one, so we will not fill the corresponding boundary component of the page. {We can assume every other boundary component is filled, by interpreting the horizontal boundary as a disjoint union of $S^1$ bundles over surfaces $\Sigma_i$ with boundary. Since such $S^1$ bundles are trivial, we can fill them with $D^2\times \Sigma_i$, a neighborhood of a multisection component $\{0\}\times \Sigma_i$.} After filling in these multisections, the fiber of the Lefschetz fibration is a disk, so all the vanishing cycles are contractible. Thus the Lefschetz fibration must be on $(D^2\times D^2)\# n \cptwobar$ where each connected summand of $\cptwobar$ corresponds to a contractible vanishing cycle. }		
\end{proof}

\begin{proof}[Proof of Theorem~\ref{thm:maintechnical}, Part (2)]
{Given a minimal symplectic filling $W$ of the contact link $(Y, \xi_{can})$ of the link of the sandwiched singularity, the spinal open book of Lemma~\ref{binding-from-function} satisfies the hypotheses of Theorem~\ref{thm:nearlyL-sphere}. Therefore, there is a multisection 
$\tilde{\Gamma}= \cup_{i=1} \tilde{\Gamma}_i$ in a blowup of $D^2 \times D^2$, with connected components $\tilde{\Gamma}_i$, such that  
$$
W= [(D^2 \times D^2)  \#_n \cptwobar] \setminus \cup_i \nu(\tilde{\Gamma}_i).
$$
We can assume that the multisection is transverse to the fibers and intersects the boundary of $(D^2 \times D^2)  \#_n \cptwobar$ transversely. 
The vanishing cycles of the (non-minimal)  Lefschetz fibration on $(D^2 \times D^2)  \#_n \cptwobar$ are simple closed curves in the disk, and we assume that they are contained in distinct fibers. Contracting the disks bounded by each of these curves in its fiber is equivalent to blowing down the total space. Let $\Gamma_i$ denote the image of $\tilde{\Gamma}_i$ in $D^2 \times D^2$ under this blowdown. To make $\Gamma = \cup \Gamma_i$ into a DVJS arrangement, we equip it with a collection of marked points
$p=\{p_j\}_{j=1}^n$ corresponding to the vanishing cycles. A vanishing cycle that separates several multisection branches from the outer boundary of the disk  fiber gives a marked intersection point of the arrangement, and each vanishing cycle encircling a single branch gives a free marked point on that branch. Note that by minimality of $W$, a vanishing cycle cannot bound a disk in the fiber that is disjoint from the multisection $\tilde{\Gamma}$.}

{By construction, $(\Gamma, p)$ is a DJVS immersed disk arrangement, with each map $n_i$ 
given by the composition  of the smooth embedding of the 
component $\tilde{\Gamma}_i$ into $(D^2 \times D^2) \# n \cptwobar$, followed by the blowdown.  Indeed, by Theorem~\ref{thm:nearlyL-sphere}, each $\tilde{\Gamma}_i$ is a disk, so $\Gamma_i$ is an immersed disk. Intersections are positive transverse multipoints because they are created by blowing down an exceptional curve that several branches of the multisection intersect transversely.}  

{It is clear that the original filling $W$ can be recovered from $(\Gamma, p)$ by the DJVS construction, that is, $W= W_{(\Gamma, p)}$. Since the original spinal open book on the boundary of $W$ is the open book obtained from the decorated germ $(C, w)$, it also follows that  $(\Gamma, p)$ is compatible with the germ $(C, w)$.}  
\end{proof}

\begin{remark}
	Our theorem means that given a specific symplectic filling $(M,\omega)$ of $(Y,\xi_{can})$, there is a diffeomorphism between $M$ and  $W_{(\Gamma, p)}$
	given by some DJVS arrangement, 
	such that pulling back the symplectic form on $M$ to $W$ gives a symplectic form supported by the nearly Lefschetz fibration. 
	The nearly Lefschetz fibration determines a symplectic structure up to symplectic deformation equivalence~\cite[Section 4.2]{HRW}, so any symplectic form on $W$ compatible with the nearly Lefschetz fibration will be symplectic deformation equivalent to the pullback of $\omega$.
\end{remark}

\section{Monodromy factorizations and braided wiring diagrams}\label{sec:diagrams}

{For constructions and classification questions, it is important to have hands-on tools that translate some of the topological information into combinatorial data. 
For (nearly) Lefschetz fibrations, this role is played by monodromy factorizations. 
To encode DVJS arrangements, we generalize the notion of braided wiring diagram (first used by Arvola~\cite{Arvola} to study complex line arrangements).  These are $1$-dimensional singular braids, in our case decorated with marked points. 
For symplectic fillings of the links of sandwiched singularities, given by nearly Lefschetz fibrations as in the previous section, we describe the explicit relation between the monodromy factorization of the fibration and the braided wiring diagram with tangencies that encodes the corresponding DJVS arrangements. This extends our work in~\cite{PS}, where we used the language of factorizations and braided wiring diagrams to prove a version of Theorem~\ref{thm1-intro} in a more restictive setting and construct unexpected fillings from certain wiring diagrams.  In our subsequent paper~\cite{BPS}, we develop moves in a diagrammatic calculus for braided wiring diagrams with tangencies, as described in this section, and use it to construct unexpected rational homology disk Stein fillings for certain links of sandwiched singularities. 
Most of the material of this section applies to more general nearly Lefschetz fibrations over the disk, where the spinal open book on the boundary may have a more general spine rather than a multilink binding. The corresponding braided wiring diagrams then describe a more general surface arrangement, rather than DJVS immersed disks, see Remark~\ref{rem:disks}. We believe that the diagrammatic approach would 
be useful for constructions in this more general settings as well, as it provides tools to modify and control certain symplectic constructions via basic combinatorics.}    

\subsection{Nearly Lefschetz fibrations via monodromy factorization.}

{
	Similar to the classical case, a nearly Lefschetz fibration is encoded by an ordered list of vanishing cycles (corresponding to Lefschetz critical points) and vanishing arcs (corresponding to exotic fibers) up to \emph{generalized Hurwitz equivalence}~\cite{BaHa2}, and the monodromy of the spinal open book induced on the boundary is given as a product of positive Dehn twists about the vanishing cycles and \emph{boundary interchanges} about the vanishing arcs. The boundary interchange is a half-twist along an arc in the page $P$ connecting these two components of $\d P$, as in Figure~\ref{fig:squareroot}. (Recall this is the monodromy around the local model of an exotic fiber from Example~\ref{ex:squareroot}.) The group of diffeomorphisms of $P$ generated by Dehn twists and boundary interchanges is the spinal mapping class group $SMod(P)$.}

{
While a classical open book decomposition is abstractly encoded by its fiber and monodromy, determining a spinal open book requires further specifying the topology of the spine components as $S^1\times \Sigma_i$ for some surfaces with boundary $\Sigma_i$. It is important to note that for a given spinal open book with fiber $P$ and monodromy $\phi$, not every positive factorization of $\phi$ in $SMod(P)$ will correspond to a nearly Lefschetz fibration inducing the given spinal open book on its boundary. We must put further constraints on the exotic fibers to get the right topology for the spine components. For this reason, \cite[Definition 2.4]{HRW} introduces the notion of \emph{admissible} positive factorizations in $SMod(P)$, relative to a fixed choice of base surface $B$ and representation $\rho:\pi_1(B)\to Mod(P)$.
}

{
In our restricted setting of multilink binding, our spinal open books always have vertebrae $\Sigma_i$ which are disks, and thus the only possible base surface $B$ is a disk as well, and the representation mentioned above is trivial. Each spine component $\Sigma_i\times S^1$ intersects some collection of boundary components of $P$, which is denoted by $\partial^i_B(P)$. The number of these boundary components is the multiplicity of the corresponding binding component, denoted $d_i$. In our multilink binding case, the monodromy $\phi$ cyclically permutes the components of $\partial^i_B(P)$, so the collections $\partial^i_B(P)$ are actually determined by the monodromy $\phi$. The Riemann-Hurwitz formula implies that a degree $d_i$ branched covering from $\Sigma_i=D^2$ to $B=D^2$ has $d_i-1$ branch points. In this case, we obtain~\cite[Definition 2.4]{HRW} in our setting:}

{\begin{definition} \label{def:admissible}
	Given a spinal open book with multilink binding, its monodromy $\phi$ cyclically permutes subsets $\partial^i_B(P)$ consisting of $d_i$ boundary components of $P$. For this spinal open book, an \emph{admissible positive factorization} is a way of writing $\phi$ as a product of 
\begin{itemize}
	\item $d_i-1$ boundary interchanges between pairs of boundary components in $\partial^i_B(P)$
	\item positive Dehn twists about essential curves in $P$
\end{itemize}
\end{definition}
}

{Given a spinal open book with multilink binding, any admissible positive factorization of its monodromy $\phi$ corresponds to a nearly Lefschetz fibration such that the spinal open book induced on the boundary is the given one. Combining this with the results of~\cite{LVHMW2,HRW}, we obtain the following combinatorial characterization of minimal strong symplectic fillings of a contact manifold supported by a planar spinal open book with multilink binding.}

{
\begin{theorem}[\cite{LVHMW2,HRW}]\label{thm:disk-multisections}
		Suppose $(M,\xi)$ is a contact $3$-manifold supported by a planar spinal open book with multilink binding with fiber $P$ and monodromy $\phi$. Then the minimal strong symplectic filling of $(M,\xi)$ are in bijective correspondence with the admissible positive factorizations of $\phi$ up to generalized Hurwitz equivalence.
\end{theorem}
}


\subsection{Braided wiring diagrams.} We introduce a diagrammatic way to encode a DJVS disk 
arrangement $\Gamma=\cup \Gamma_i$ in $D_x\times D_y$.  {(The same strategy works for somewhat more general immersed  surface arrangements with similar properties, without requiring that the components be disks, but we will work with DJVS arrangements to avoid lengthy definitions.)} The branch points of the projection arise when $\Gamma_i$ is tangent to a fiber of $\pi_x$, so we refer to these critical points of the projection as tangencies. The key information is the branch points, the intersections, and how these connect to each other. We will show that $\Gamma$ can be determined up to isotopy by a 1-dimensional singular braid with the tangencies and intersection points specified, called a \emph{braided wiring diagram}: 
our approach generalizes the notion of braided wiring diagram introduced in~\cite{Arvola} to allow for tangencies. Combinatorially, a braided wiring diagram can be encoded by a sequence $(b_0, S_1, b_1, S_2, \dots, b_{N-1},S_{N}, b_{N})$ where $b_i$ is a braid word on $n$ strands, and $S_i$ is a singularity--either a transverse intersection between a specified collection of adjacent strands or a tangency between two specified adjacent strands. (One could easily generalize this to allow more singularities, as long as a model is specified for each singularity type.) For brevity, we will also write a sequence of the form  $(b_0, S_1, b_1, S_2, \dots, b_{N-1},S_{N}, b_{N})$ as a word $b_0 S_1 b_1 S_2 \dots b_{N-1} S_{N} b_{N}$, omitting any identity elements $b_i$. 
	
We adopt the following notational conventions (conventions vary in the literature).  Strands are enumerated from  top to bottom  with local indices (i.e. distance from top at that position in the diagram). The diagram is read left to right. The singularities are labeled $T_i$ for a tangency between the $i^{th}$ and $i+1^{st}$ strands, and $I^i_{i+k}$ for a transverse intersection between the $i^{th}$ through $(i+k)^{th}$ strands.
	Braid words, read left to right, are in multiplicative notation, $\sigma_i$ indicating a crossing between the $i^{th}$ and $(i+1)^{st}$ strands where the $i^{th}$ strand crosses over the $(i+1)^{st}$ strand from left to right. See Figure~\ref{fig:brwiringdiagram} for an example.

\begin{figure}
	\centering
	\includegraphics[scale=.75]{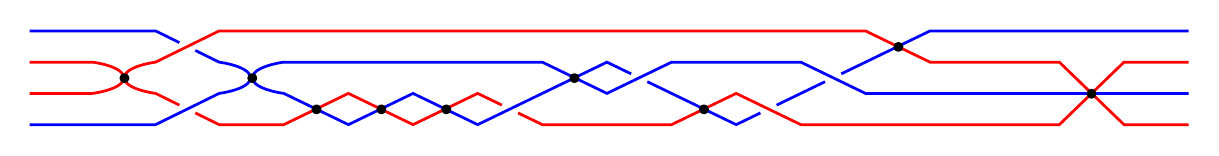}
	\caption{Braided wiring diagram $T_{2} \sigma_1^{-1}\sigma_3^{-1} T_{2} I^3_4 I^3_{4} I^3_{4} \sigma_3^{-1} I^2_{3}  \sigma_2^{-1} I^3_{4} \sigma_3\sigma_2 I^1_{2} I^2_{4}$,	
	or 
	$(1, T_{2}, \sigma_1^{-1}\sigma_3^{-1}, T_{2}, 1, I^3_4, 1, I^3_{4}, 1, I^3_{4}, \sigma_3^{-1}, I^2_{3}, \sigma_2^{-1}, I^3_{4}, \sigma_3\sigma_2, I^1_{2}, 1, I^2_{4})$ in expanded form.
	}
	\label{fig:brwiringdiagram}
\end{figure}

Given an arrangement $\Gamma$, we construct a braided wiring diagram as follows. Let $q_1,\dots, q_N\in D_x$ be the images of intersection points and tangencies (branch points) of $\Gamma$. Choose an embedding $\alpha:I\to D_x$ such that $\alpha(t_1)=q_1,\dots, \alpha(t_N)=q_N$ for $0<t_1<\cdots <t_N<1$ and $\alpha|_{[t_i-\delta,t_i+\delta]}(t) = (t-t_i)+q_i$ runs parallel to the real axis in $D_x$ for some sufficiently small $\delta>0$. Then $\pi_x^{-1}(\alpha(I))\cong [0,1]\times \C$, and $\Gamma\cap \pi_x^{-1}(\alpha(I))$ is a $1$-dimensional braid, except at singular points above $q_1,\dots, q_N$. This restriction of~$\Gamma$ over the preimage of the (piecewise linear) arc $\alpha$ is  the {\em braided wiring diagram} for $\Gamma$. This depends on the choice of $\alpha$, so the braided wiring diagram is not unique, but one can describe the moves induced on the braided wiring diagram by changes to $\alpha$ (singular Markov moves), 
cf~\cite{CS}. 

To go from the $1$-dimensional braided wiring diagram to an arrangement $\Gamma$, we will insert a local model in a neighborhood of each intersection and tangency and then connect the local models with a thickening of a braid. As braided wiring diagrams have not previously been used for arrangements with tangencies, we give a full description of the local model and interpretation of the braid monodromy near a tangency.
	
\subsection{Model for tangency.} \label{ss:models}  The standard model for a complex curve with a tangency to the projection $\pi_x$ is given by the curve $\{(x,y)\in \C^2\mid x=y^2\}$. The tangency occurs at the origin. In order to avoid a highly degenerate picture in our braided diagram, we choose a slightly perturbed model (which relates to the standard model by a simple complex coordinate change). Note that we are making a choice in our perturbation which fixes our conventions near the tangency.

Fix $0<\mu\ll \pi/2$. Our model for a tangency will be the complex curve $\{(x,y)\in \C^2\mid e^{i\mu}x = y^2\}$ with the tangency of $\pi_x$ at the origin. The braided wiring diagram and braid monodromy near this model are shown in Figure~\ref{fig:tangencymodel}. The preimage of the model over the interval in the $Re(x)$ axis includes the singularity, and gives the picture for the braided wiring diagram near a tangency. The braid monodromy about a small circle oriented counterclockwise around the singularity is a single positive half twist on two strands. We break up this circle into two half-circles showing the push-off of the singular braid in the positive and negative imaginary directions in $\C_x$.  {Pushing off in the positive imaginary direction gives the braid $\sigma_i^{-1}$ read left to right. Pushing off in the negative imaginary direction gives the trivial braid. The counterclockwise monodromy around the singularity is obtained by composing the inverse of the positive pushoff braid with the negative pushoff braid (since a counterclockwise circle travels right to left along the top and left to right along the bottom). Thus the total monodromy about the tangency is $\sigma_i$ as expected.} 

\begin{remark} We emphasize  that our choice of perturbed model with small $\mu>0$ sets a convention for the asymmetry between the positive and negative push-offs where the crossing (with respect to the projection to the $(Re(x),Re(y))$ plane) occurs in the positive push-off and not in the negative push-off. If we chose a perturbed model with $-\pi/2 \ll \mu <0$, the crossing would appear in the negative push-off instead (and the strand with negative slope would be the overstrand), though the total monodromy around the full circle would be the same. It is important to keep track of what happens on which side consistently with the model, so to avoid confusing conventions that might accidentally be mixed incorrectly, we assume all tangencies have the same model with $\mu>0$ as above so the crossing in the monodromy occurs in the positive (or ``back'') push-off of the wiring diagram.
\end{remark}

\begin{figure}
	\centering
	\includegraphics[scale=.3]{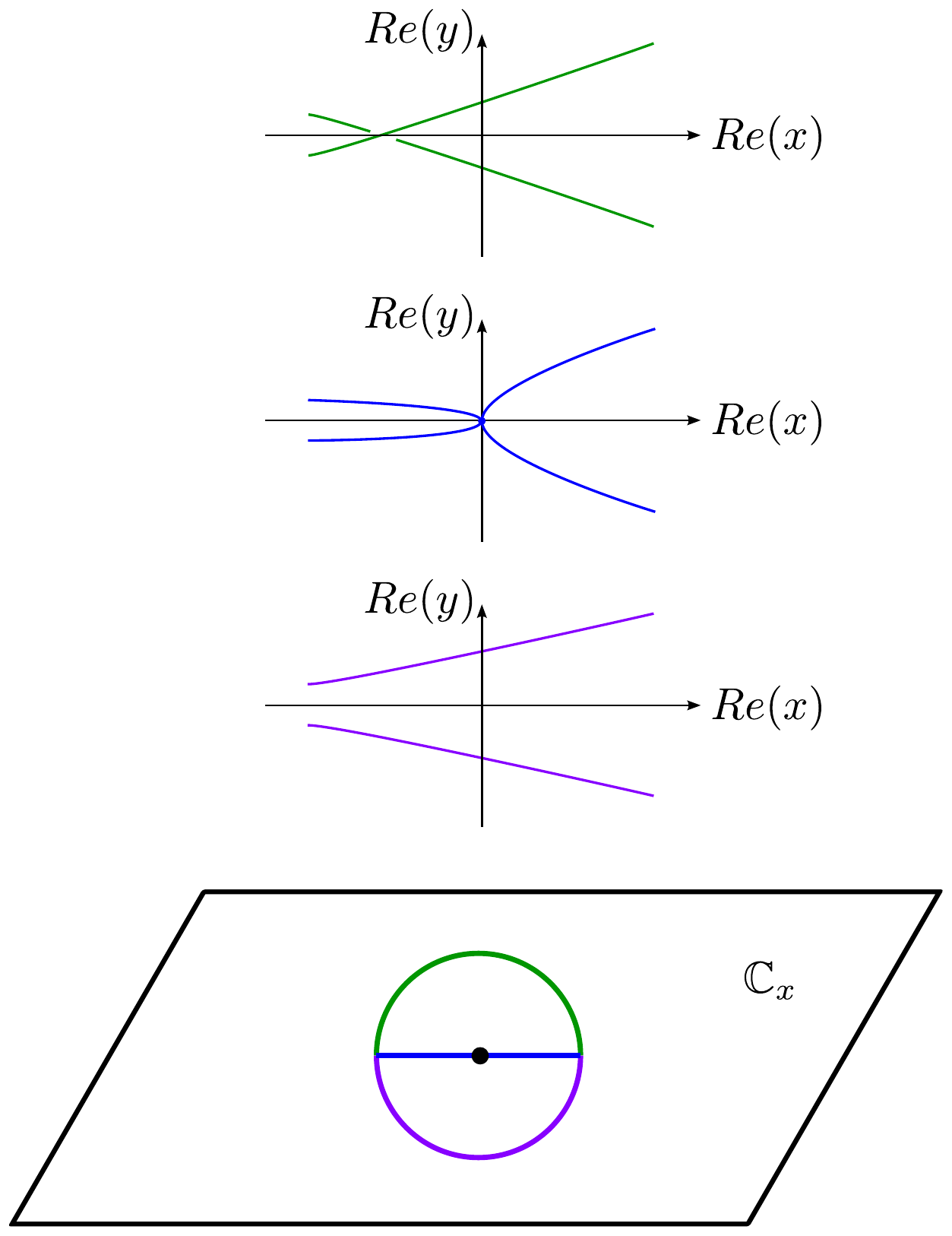}
	\caption{The model tangency $\{e^{i\mu}x = y^2\}$. The preimages of three curves in the base $\C_x$ are shown in their projections to the real parts of $x$ and $y$. We project out the imaginary $y$ axis, but record crossings as in the top curve where an overstrand indicates a larger value for the imaginary part of $y$. Note that in the singular diagram above $[-1,1]$ in the real axis of $\C_x$, the two strands on the right side of the singularity have imaginary part $0$, while on the left side, the upper strand has negative imaginary part and the lower strand has positive imaginary part.}
	\label{fig:tangencymodel}
\end{figure}

The model for a transverse intersection is given by a pencil of complex lines which are the complexification of real transverse lines. See~\cite{PS,CS} for more details on braided wiring diagrams with only transverse intersection singularities and models.  Figure~\ref{fig:brwiringdiagrampushoffs} shows a braided wiring diagram including both tangency and transverse intersection models, along with the pushoffs in the positive and negative imaginary directions. Let $\Delta_{j,j+k}$ denote the positive half twist on strands $j,j+1,\dots, j+k$ (e.g. $\Delta_{j,j+1}=\sigma_j$). Then the positive pushoff of $I^i_{i+k}$ is $\Delta_{i,i+k}^{-1}$ and the negative pushoff is $\Delta_{i,i+k}$.
Thus, for the surface bundle in the complement of the curve over a small circle around the image of the transverse intersection, the monodromy is $\Delta_{i,i+k}^2$, the positive Dehn twist about a curve convexly enclosing holes $i,i+1,\dots, i+k$.

\begin{figure}
	\centering
	\includegraphics[scale=.55]{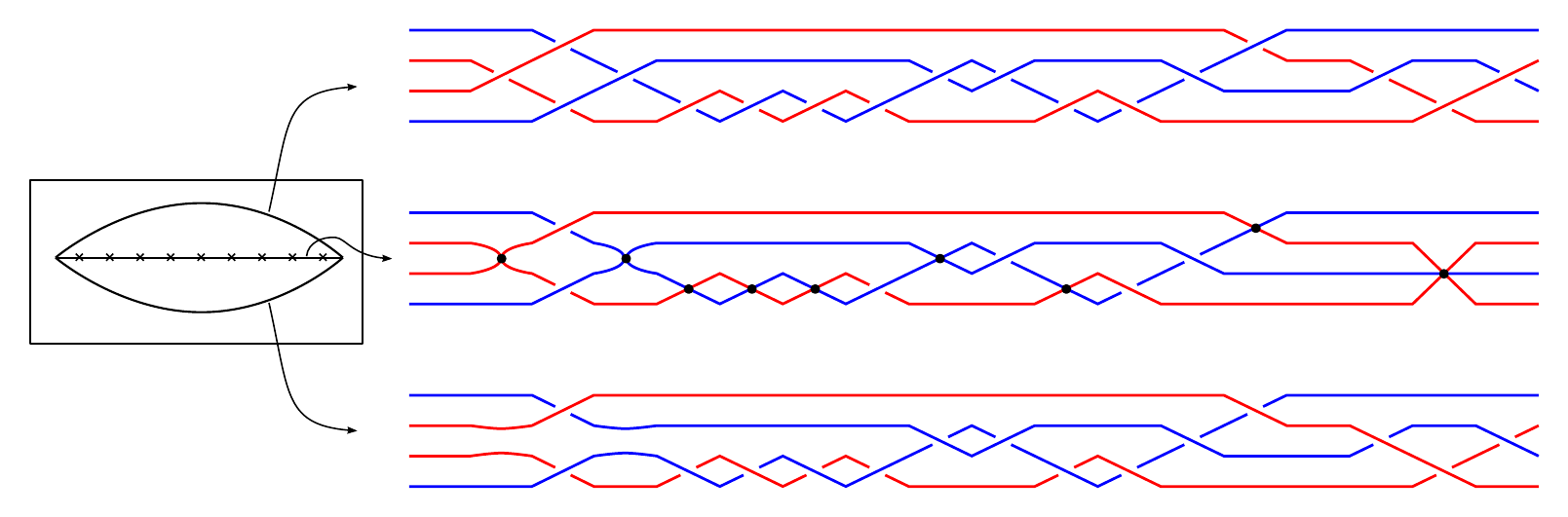}
	\caption{The braided wiring diagram from Figure~\ref{fig:brwiringdiagram}, along with its pushoffs in the positive (top) and negative (bottom) imaginary directions. To obtain the monodromy about the entire configuration, compose the inverse of the top braid with the bottom braid.}
	\label{fig:brwiringdiagrampushoffs}
\end{figure}

\subsection{Symplectic arrangement from braided wiring diagram.} Given a braided wiring diagram, we can construct an arrangement $\Gamma$ of symplectically immersed 
{surfaces}
in $D_x\times D_y$
whose restriction over the real axis in $D_x$ recovers the given diagram.  

\begin{lemma}\label{lem:wiringtocurves}
	Let $(b_0,S_1,b_1,\dots, S_{N},b_{N})$ be a braided wiring diagram. Then there exists an arrangement $\Gamma$ of symplectically immersed \op{surfaces} in $D_x\times D_y$ such that the braided wiring diagram of $\Gamma$ over the real axis in $D_x$ is $(b_0,S_1,b_1,\dots, S_{N},b_{N})$.
\end{lemma}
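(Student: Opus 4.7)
The plan is to build $\Gamma$ piecewise over the real axis of $D_x$. Choose points $a_0 < q_1 < a_1 < q_2 < \cdots < q_N < a_N$ on the real axis, where each $q_j$ will carry the singularity $S_j$. Cover $D_x$ by small disks $U_j \ni q_j$ and thin rectangles $R_j$ over $[a_{j-1}, a_j]$, with narrow controlled overlaps near each $a_j$. Over each $U_j$ I would insert an explicit complex local model for $S_j$; over each $R_j$ I would realize the braid word $b_j$ as a symplectic braid; the overlaps will be arranged so that the pieces match automatically.

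Over $U_j$ the local model is holomorphic and therefore symplectic. For a transverse intersection $S_j = I_{i_1,\ldots,i_k}$, insert a pencil of complex lines through a base point $(q_j, c_j) \in U_j \times D_y$ whose tangent cones are complexifications of distinct real slopes, and let the uninvolved strands pass through $U_j$ as constant horizontal sections $y \equiv \mathrm{const}$. For a tangency $S_j = T_{i, i+1}$, insert the perturbed parabolic model $\{e^{i\mu}(x-q_j) = (y - c_j)^2\}$ fixed in Figure~\ref{fig:tangencymodel}, with the other strands again horizontal. By the conventions set preceding the lemma, the braided wiring diagram of this local piece above the real axis of $U_j$ is precisely $S_j$.

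Over each $R_j$, realize $b_j$ as a smooth braid whose strands are graphs $y = y_s(x)$ with endpoints on $\partial R_j$ matching the horizontal sections produced by the neighboring singularity models; any braid word admits such a realization rel endpoints. To make these graphs symplectic for the standard form $\omega_0 = \tfrac{i}{2}(dx\wedge d\bar x + dy\wedge d\bar y)$, elongate $R_j$ in the $x$-direction so that $|\partial y_s/\partial x|$ is uniformly small: restricted to the graph of $y_s$, $\omega_0$ evaluates to $1 + \det D y_s$, which is positive whenever the real Jacobian is small. In the overlap regions near each $a_j$, both the singularity models and the braid strands are horizontal sections, so the pieces glue to smoothly immersed symplectic surfaces. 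Grouping strands by the equivalence relation generated by tangencies produces the disk components $\Gamma_i$, each an immersed symplectic disk meeting $\partial D_x \times \Int D_y$ transversely.

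The main delicate point is the symplecticity of the braid regions, which is handled by the stretching argument; everything else is bookkeeping. Since the complex models near singularities and the horizontal sections used for gluing are automatically holomorphic (hence symplectic), and since the braid word in $R_j$ is by construction exactly $b_j$ with endpoints matched to the adjacent singularity models, the slice of $\Gamma$ over the real axis of $D_x$ reproduces the given braided wiring diagram. The resulting $(\Gamma, p)$, with $p$ taken to be the images of the transverse multiple points together with any specified free marked points, then satisfies all conditions of Definition~\ref{def:DJVSarrangement}.
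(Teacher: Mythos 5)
Your construction follows essentially the same route as the paper's proof: insert holomorphic local models (a pencil of complexified lines at a transverse intersection, the perturbed parabola at a tangency) over small disks around the singular values, interpolate with the braid words in between, and obtain symplecticity from the fact that away from the tangencies the surface projects to $D_x$ with positive orientation, so that a rescaling makes the $\omega_x$ term dominate (you stretch in the $x$-direction, the paper shrinks in the $D_y$-direction; the two devices are interchangeable), while near the tangencies the model is a complex curve and hence automatically symplectic for the product form.

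There is, however, one concrete omission. The small disks $U_j$ together with thin rectangles $R_j$ over $[a_{j-1},a_j]$ do \emph{not} cover $D_x$: they cover only a neighborhood $\nu$ of the real axis. As written, your surface is defined only over $\nu$, so its components are not branched covers of all of $D_x$ and their boundaries do not lie in $\partial D_x\times \Int D_y$; in particular conditions (i) and (iv) of Definition~\ref{def:DJVSarrangement} fail, and the output cannot be fed into Theorem~\ref{thm:smooth-nearlyLefschetz} as required in the proof of Theorem~\ref{thm:all-fillings-from-arrangements}. The missing step is exactly the extension the paper performs: over the two components of $D_x\setminus\nu$, extend the surface by taking the product of the positive and negative pushoff braids (the restrictions of your surface to the two components of $\partial\nu\setminus\partial D_x$) with the adjacent portion of the imaginary $x$ direction. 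With this extension your argument matches the paper's; note also that on such product pieces (and on your braid rectangles, if you let the strands depend only on $\Re x$) the pullback of $\omega_x\oplus\omega_y$ is already positive without any stretching, so the rescaling is really only needed where the pieces are smoothed together and near the local models, where complexity of the model handles it.
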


\begin{proof}
We extend the $1$-dimensional slice over the real axis by inserting the local models in a small neighborhood of each singularity. Then we extend the braid portions by a product with the imaginary $x$ direction, up to a minor isotopy near the endpoints of the braid to smoothly connect them to the local models. This fills in a surface over a neighborhood $\nu$ of the real axis in $D_x$, which we extend to the entirety of $D_x\times D_y$ by taking a product of each of the braids on the two components of $\partial \nu \setminus \partial D_x$ with the adjacent portion of the imaginary $x$ axis. If the resulting surface is not symplectic with respect to the product symplectic form $\omega_x\oplus \omega_y$, a smaller rescaling of it in the $D_y$ direction will be. This is because rescaling in the $D_y$ direction causes the $\omega_x$ term to dominate, and the tangent space of $\Gamma$ projects surjectively to the tangent space of $D_x$ with positive orientation away from the tangency singularities. Near the tangency singularities, the model is symplectic since the product symplectic form is compatible with the standard complex structure. Rescaling the $y$ direction preserves the symplectic property in the model so the surface remains symplectic here under this rescaling.
\end{proof}

{
\begin{remark}\label{rem:surfaces}
The surfaces constructed in the previous lemma may a priori have higher genus, so we might get a more general ``immersed multisection'', not necessarily  an immersed disk arrangement. The Euler characteristic and the number of boundary components of each surface component
can be easily computed from the diagram, in particular, we can check whether every component is a disk if we are interested in DJVS arrangements.
\end{remark}
}

\subsection{Scott deformation.} \label{ss:Scott} Every sandwiched singularity has a distinguished \emph{Artin smoothing} (or Artin component of the deformation space). The corresponding deformation of the weighted curve germ is the \emph{Scott deformation} which is obtained as follows. Given the curve germ $C$ with isolated singularity, blow up at the singular point. Next, deform the exceptional component of the total transform (via an arbitrarily small deformation) so that the exceptional curve intersects the proper transform of $C$ generically transversally at smooth points of the proper transform. The proper transform will generally have singular points remaining. At each such singular point, we perform the same process of blowing up and then taking an arbitrarily small deformation of the exceptional component so it intersects generically. Iterate this process until the proper transform has no further singularities, and then blow down all of the exceptional spheres. (Note that all exceptional spheres are disjoint from each other since we assumed that each exceptional sphere intersects the proper transform only at smooth points, and further blow-ups only occur at remaining singular points.) Thus the resulting curve in $\C^2$ only has transverse multipoint singularities. This result is by definition the Scott deformation of $C$. Note that the Scott deformation for a general $(C,w)$ may have branch points with respect to the projection $\pi_x$ in addition to the multipoint singularities. However, because the deformations of the exceptional curves can be made arbitrarily small, there is always a wiring diagram for the Scott deformation with \emph{no braiding} between the singular points. For example, the curve germ which had a deformation with wiring diagram as in Figure~\ref{fig:brwiringdiagram} is two components each with simple cusp ($x^2=y^3$) singularities such that the total multiplicity of intersection between the two components is $7$. The (unbraided) wiring diagram for the Scott deformation of this germ is shown in Figure~\ref{fig:Scott}.

\begin{figure}
	\centering
	\includegraphics[scale=.75]{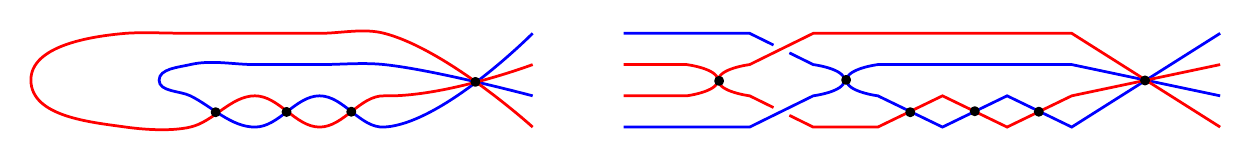}
	\caption{Scott deformation of a germ with two irreducible components, each with a simple cusp singularity, and intersecting each other with multiplicity $7$. On the left is a picture of the real part of the Scott deformed curve. On the right is a braided wiring diagram for the Scott deformed curve. Another deformation of this germ is shown via a braided wiring diagram in Figure~\ref{fig:brwiringdiagram}.}
	\label{fig:Scott}
\end{figure}

\subsection{From wiring diagram data to vanishing cycles/arcs.}
Given a DJVS arrangement $(\Gamma, \{p_j\})$, Theorem~\ref{thm:smooth-nearlyLefschetz} gives a construction of an associated manifold $W$ with a nearly Lefschetz fibration. We have seen that the arrangement can be encoded in a braided wiring diagram (with marked points). The next lemma explains how to 
read off the vanishing cycles and vanishing arcs of the nearly Lefschetz fibration directly from the braided wiring diagram. 
 The vanishing cycles and vanishing arcs will be determined up to {\em generalized} Hurwitz equivalence, \cite{BaHa2}, 
\cite[Definition 2.14]{HRW}: in addition to the classical Hurwitz moves, there is a {\em framing conjugation} move that consists of conjugating a boundary interchange along an arc by a Dehn twist around a boundary parallel curve.
It is important to note  that the braid monodromy is an element of the mapping class group 
	$Mod(\text{disk with punctures})$ while the monodromy of the nearly Lefschetz fibration is an element of $Mod(\text{disk with holes})$. The inclusion induces the map  $Mod(\text{disk with holes}) \to Mod(\text{disk with punctures})$, which sends the monodromy of the Lefschetz fibration to the braid monodromy of the corresponding arrangement. We will use the braids to get partial information on the monodromy of the fibration, together with local models that pin down the boundary parallel Dehn twist that are in the kernel of the above map. Despite this subtlety, we will use similar notation for braids (considered as elements of the mapping class group of the punctured disk) and their lifts the mapping class group of the compact fiber $P$, a disk with holes.

	{The following lemma works for an arbitrary braided wiring diagram with marked points, where all the intersections are marked. We state it for DJVS arrangements because we are mainly interested in those.} 
\begin{lemma} \label{lem:vanishingfromwiring}
	Let $(\Gamma,\{p_j\})$ be a DJVS arrangement with marked points and the corresponding  braided wiring diagram $(b_0,S_1,b_1,S_2,\dots,b_{N-1},S_{N},b_{N})$, {where a free marked count will also be considered a singular point $S_j$, treated as an intersection point with only one strand.}
	Fix an identification of the planar fiber $P$, a disk with 
	disjoint holes of equal radius centered along a straight line. If the $j^{th}$ singularity is an intersection $I^i_{i+k}$, let $A_j$ denote a convex curve containing holes $i,i+1,\dots, i+k$, and let $\phi_{j} = \Delta_{i,i+k}^{-1}$ denote the {negative half twist}. (In the case of a free marked point where $k=0$, $A_j$ is the boundary parallel curve around the $i^{th}$ hole, {$\phi_j:=\Delta_{i,i}^{-1}$ is the negative half twist on $A_j$}). If the singularity $S_j$ is a tangency between strands $i,i+1$, let $A_j$ denote a straight line arc connecting hole $i$ and $i+1$, and 
	let $\phi_{j}:=\Delta_{i,i+1}^{-1}=\sigma_i^{-1}$ be the negative half twist along $A_j$.
	 
	Let $(V_1,V_2,\dots, V_{N})$ be an ordered collection of vanishing cycles and arcs corresponding to the nearly Lefschetz fibration on the manifold $W_{(\Gamma, p)}$ produced by Theorem~\ref{thm:smooth-nearlyLefschetz}. Then up to {generalized} Hurwitz equivalence, 
	\begin{eqnarray*}
		V_1 &=& b_0^{-1}(A_1)\\
		V_2 &=& b_0^{-1}\circ \phi_1^{-1}\circ b_1^{-1}(A_2)\\
		 & \vdots & \\
		V_{N}&=& b_0^{-1}\circ \phi_1^{-1}\circ b_1^{-1}\circ \cdots \circ \phi_{N-1}^{-1}\circ b_{N-1}^{-1}(A_{N}).
		\end{eqnarray*}
		{Above, $b_i \in Mod(P)$ stands for an arbitrary lift of the braid $b_i$ to the mapping class group of the disk with holes. Different choices yield the same vanishing cycles and may change vanishing arcs by framing conjugation only.}
\end{lemma}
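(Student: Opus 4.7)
The plan is to identify each vanishing cycle or arc $V_i$ by tracking how the local data $A_i$ at the critical value $q_i=\pi_x(S_i)$ pulls back to the reference page $P$ under parallel transport along a specific system of vanishing paths determined by the braided wiring diagram.

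First I would pin down the local picture at each singular value. By Theorem~\ref{thm:smooth-nearlyLefschetz}, the singular values of the nearly Lefschetz fibration $\pi\colon W_{(\Gamma,p)}\to D_x$ are exactly $q_1,\dots,q_N$. In the fiber immediately to the right of $q_i$ on the real axis, the local vanishing cycle (respectively vanishing arc) is the curve or arc $A_i$: for a marked intersection point this is Lemma~3.2 of~\cite{PS} applied to the blow-up, so the vanishing cycle encloses exactly the holes corresponding to the strands meeting at $p_j$ (the free marked case is analogous, giving a boundary-parallel vanishing cycle); for a tangency, the complement of the model multisection in Example~\ref{ex:squareroot} realizes the vanishing arc as the straight segment connecting the two holes that merge.

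Next I would set up the global comparison. Fix a base point at the right end of the wiring-diagram arc, so $P$ is the fiber over it. For each $i$ let the vanishing path $\gamma_i$ run leftward along the real axis, bypassing each earlier singular value $q_k$ ($k<i$) via a small upper semicircle in $D_x$, and terminate just to the right of $q_i$. The vanishing cycle or arc $V_i$ in $P$ is the image of $A_i$ under parallel transport along $\gamma_i^{-1}$. The path $\gamma_i$ decomposes into straight real-axis segments between consecutive singular values, over which the boundaries of the fibers (the meridians of $\nu(\tilde{\Gamma}_i)$) sweep out exactly the braid word $b_k$ of the wiring diagram and so act as $b_k$ on the $n$-holed disk, plus small upper semicircles at each preceding singular value.

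The core of the proof is then computing the bypass contribution $\phi_k$. For an intersection, the standard analysis of the blow-up of a transverse $(k+1)$-fold point shows that parallel transport along the upper semicircle realizes the positive half twist $\Delta_{j,j+k}$ on the $k+1$ colliding holes, whose square is the full-turn Lefschetz monodromy $\tau_{A_k}$; hence $\phi_k=\Delta_{j,j+k}$. For a tangency, the perturbed model of Figure~\ref{fig:tangencymodel} places the two branches on the real $y$-axis to the right of $q_k$ and on the imaginary $y$-axis to the left, and the wiring-diagram convention identifies each strand with its natural continuation across $q_k$; the bypass monodromy on the remaining $n-2$ holes is then the identity and no extra braiding of the two colliding strands appears, so $\phi_k=\mathrm{id}$. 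Composing along $\gamma_i$ gives total parallel transport $b_{i-1}\circ\phi_{i-1}\circ b_{i-2}\circ\cdots\circ\phi_1\circ b_0$ from $P$ to the fiber just to the right of $q_i$, and inverting to pull $A_i$ back to $P$ yields the displayed formula. The main obstacle will be the careful handling of the tangency case, where the full-turn monodromy about the exotic fiber is itself only a half twist (the boundary interchange along $A_k$) rather than a full Dehn twist: one must verify that with our conventions for wiring labels across a tangency, the bypass contribution degenerates to the identity while the full-loop monodromy still recovers the correct boundary interchange. Once this and the parallel intersection-case computation are in place, the remaining ambiguity (upper versus lower semicircles, orderings of singular values along the real axis) is absorbed by Hurwitz equivalence of the ordered sequence $(V_1,\dots,V_N)$.
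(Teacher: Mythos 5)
Your overall strategy is the same as the paper's: travel along the arc of the wiring diagram, read off the braid contributions $b_k$ and bypass contributions $\phi_k$ over small semicircles around earlier singular values, use the local models to identify the twist/interchange at $S_i$ itself, and conjugate back to the reference fiber (the paper does this with full loops $l_i$ and the identity $\phi\circ\tau_C\circ\phi^{-1}=\tau_{\phi(C)}$, which is equivalent to your parallel-transport formulation). However, there is a genuine gap at exactly the point you yourself flag as ``the main obstacle'' and then do not carry out: the side of the real axis on which you bypass the earlier singularities. You route your vanishing paths around each $q_k$ through \emph{upper} semicircles and assert that at a tangency the bypass contribution is the identity. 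With the paper's fixed perturbed model $\{e^{i\mu}x=y^2\}$, $\mu>0$, the crossing in the braid monodromy occurs precisely in the \emph{positive} imaginary push-off (the ``back''/upper side, see Figure~\ref{fig:tangencymodel} and the accompanying remark), while the negative push-off is crossingless. So an upper-semicircle bypass at a tangency picks up a nontrivial half twist $\sigma_j^{\pm1}$, not the identity; it is the \emph{lower} semicircle (the paths $h_i$ in the paper's proof) that gives $\phi_k=\mathrm{id}$ at tangencies and hence the displayed formula. The tangency model is not symmetric under reflection across the real $x$-axis, which is exactly why the paper fixes $\mu>0$ and warns against mixing conventions; your justification (``no extra braiding of the two colliding strands appears'') contradicts the model as stated.

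This matters beyond bookkeeping: appealing to Hurwitz equivalence at the end does not repair it. Hurwitz equivalence relates the monodromy tuples of different genuine vanishing-path systems, but a tuple computed with upper-semicircle paths while inserting $\phi_k=\mathrm{id}$ at tangencies is not the monodromy tuple of \emph{any} path system, so the claimed identification of $V_i$ with $b_0^{-1}\circ\phi_1^{-1}\circ\cdots\circ b_{i-1}^{-1}(A_i)$ is not established by your computation. To close the gap, either switch the bypasses to the lower semicircles (as in the paper) and verify directly from the model that the negative push-off of a tangency is crossingless while the full counterclockwise loop still yields the positive boundary interchange $t_{A_i}$, or honestly compute the upper-side contributions and then prove the resulting tuple equals the stated one. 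A smaller omission: for the conjugation step you also need $\phi\circ t_{\gamma}\circ\phi^{-1}=t_{\phi(\gamma)}$ for boundary interchanges, not just the standard identity for Dehn twists; this requires a (short) separate verification.
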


\begin{proof}
	Recall that $W_{(\Gamma, p)}=[(D_x\times D_y)\#_n \cptwobar]\setminus \cup \nu(\widetilde{\Gamma}_i)$, and the Lefschetz fibration is the composition of the blow-down with the projection to $D_x$. Let $\alpha:I\to D_x$ be the arc over which lies the braided wiring diagram with data $(b_0,S_1,b_1,S_2,\dots,b_{N-1},S_{N},b_{N})$. We will define vanishing cycles and arcs in the fiber over $\alpha(0)$. We can identify the vanishing cycles by choosing an ordered collection of loops, one about each singular value, such that the composition of the loops is homotopic to an embedded loop convexly surrounding all the singular values. The monodromy about each loop will be a positive (right handed) Dehn twist about the vanishing cycle or a positive boundary interchange about the vanishing arc, depending on whether the singularity is Lefschetz or exotic respectively. Thus if we can choose such a collection of loops in $D_x$, determine the monodromy above each loop, and realize this monodromy as a positive Dehn twist about a curve, then we will have identified a set of vanishing cycles for the Lefschetz fibration. {The choices made in this process may lead to different collections of vanishing cycles/arcs but any two such sets of data are related by generalized Hurwitz equivalence.}

	Let $t_1,\dots, t_{N}\in (0,1)$ be such that $q_i:=\alpha(t_i)$ is the image of the singularity $S_i$. Let $t_0=0$, $t_{N+1}=1$. Choose a sufficiently small $\delta>0$ such that
	$\alpha|{[t_i-\delta,t_i+\delta]}(t)={(t-t_i)}+q_i$ is parallel to the real axis in $D_x$ and the braids $b_i$ lie over the intervals $\alpha([t_i+\delta, t_{i+1}-\delta])$ for $i=0,\dots, N$. Let {$h_i(t)=\delta e^{\pi i(1-t)}+q_i$} be the {clockwise} path along the {upper} semi-circle of radius $\delta$ around $q_i$. Let $g_i(t) = \delta e^{{2\pi}it} +q_i$ be the counterclockwise circle of radius $\delta$ around $q_i$. Then the loop $l_i$ is the concatenation of the following paths:
	$$l_1 = \alpha|_{[0,t_1-\delta]} \star g_1 \star \alpha|_{[0,t_1-\delta]}^{-1}$$
	and 
	$$l_i = \alpha|_{[0,t_1-\delta]}\star h_1 \star \alpha|_{[t_1+\delta,t_2-\delta]} \star \cdots \star h_{i-1} \star \alpha|_{[t_{i-1}+\delta,t_i-\delta]}\star g_i \star \alpha|_{[t_{i-1}+\delta,t_i-\delta]}^{-1} \star h_{i-1}^{-1}\star \cdots \star \alpha|_{[t_1+\delta,t_2-\delta]}^{-1} \star h_1^{-1} \star \alpha|_{[0,t_1-\delta]}^{-1}$$
	for $i=2,\dots, N$. See Figure~\ref{fig:loops}.
	
	\begin{figure}
		\centering
		\includegraphics[scale=.5]{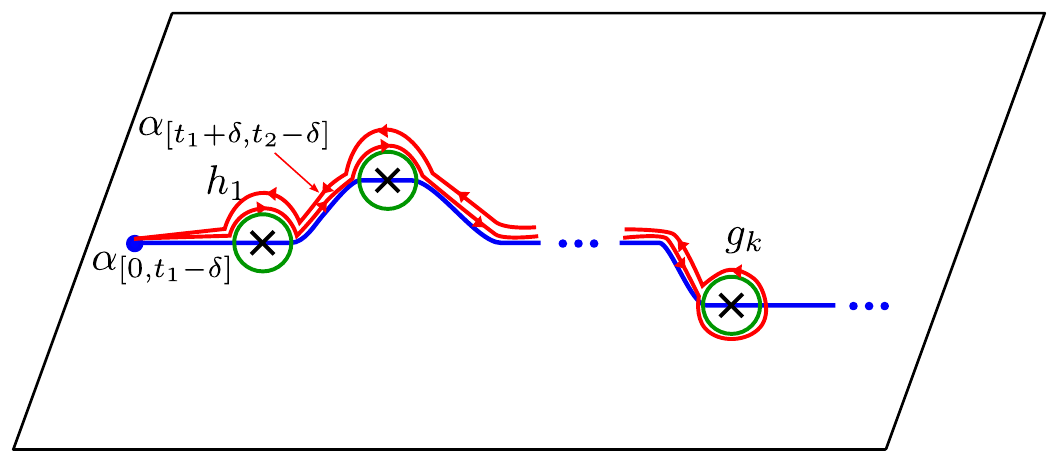}
		\caption{Loops to determine monodromy about vanishing cycles/arcs in $\C$, broken down into arcs with the corresponding monodromy for each arc specified.}
		\label{fig:loops}
	\end{figure}
	
	 We will compute the monodromy over the loops $l_i$ as the composition of the monodromy over the elementary open paths that we decomposed $l_i$ into. The monodromy over each of these open paths is well-defined because we can identify different fibers using the projection from $\C^2$ to $\C$ onto the second coordinate, and we will assume (by an isotopy if needed) that the surface intersects the fibers at the start and end of these paths in the same set of points under this identification. The corresponding elementary monodromy is determined via local models in each case. By definition, the monodromy over $\alpha|_{[0,t_1-\delta]}$ is $b_0$ and the monodromy over $\alpha|_{[t_i+\delta,t_{i+1}-\delta]}$ is $b_i$ for $i=1,\dots, N-1$. As in the statement of the lemma, we will fix arbitrary lifts of these braids to the mapping class group of the fiber $P$, keeping the same notation.  From a local model for the intersection of a pencil of transverse complex lines, we see that the braid monodromy over $g_i$ is the full twist $\Delta_{j,j+k}^2$ on the corresponding strands. For the Lefschetz fibration on the complement of the tubular neighborhoods of the strict transforms of the curves in the blowup, the local model as in \cite[Lemma 3.2]{PS} gives the the monodromy over $g_i$ as the positive Dehn twist $\tau_{A_i}$ along the convex simple closed curve $A_i$ enclosing the corresponding holes.
	 For the free marked point, we get a positive Dehn twist around a single hole. In the case of a tangency,  the local model gives the braid monodromy over $g_i$ as the half twist $\Delta_{j,j+1}$. The monodromy of the nearly Lefschetz fibration is the positive boundary interchange $t_{A_j}$ on the arc $A_j$, by  the local model as in Figure~\ref{fig:squareroot}. As explained before the statement of the lemma, the local models for the fibrations determine the boundary parallel Dehn twists in the monodromy, in addition to the information read off from the braid monodromy of the arrangement.
	
	{Similarly, the local model for the intersection of complex lines tells us that the braid monodromy over the path $h_i$ is the negative half twist $\phi_j =  \Delta_{i,i+k}^{-1}$ on the corresponding strands, as defined in the statement of the lemma. (The negative sign is due to the clockwise orientation of $h_i$.) In the case of tangency between strands $i$ and $i+1$, the braid monodromy over the path $h_i$ is the negative half twist $\phi_j =  \sigma_i^{-1}$: we use the local model and pay attention to the path orientation and the fact that it goes ``in the back'' of the diagram.  For the nearly Lefschetz fibration on the complement of the tubular neighborhoods of the strict transforms of the curves in the blowup, we can determine the boundary Dehn twists from the local models. However, we will see that the maps $\phi_j$ only enter into the total monodromy expression as conjugations. The boundary Dehn twists will not affect the generalized Hurwitz equivalence class of the fibration, therefore, we can choose to ignore them at this step.}

	As above, let $\tau_C$ denote a positive Dehn twist about a curve $C$ and let $t_\gamma$ denote a boundary interchange about an arc $\gamma$. Then the monodromy $m_{l_i}$  of the nearly Lefschetz fibration $W_{(\Gamma, p)}$
	about $l_i$ is given by
	$$m_{l_1} = b_0^{-1}\circ \tau_{A_1} \circ b_0$$
	when $S_1$ is an intersection, and by
	$$m_{l_1} = b_0^{-1} \circ t_{A_i} \circ b_0$$
	when $S_1$ is a tangency.
	
	Similarly 
	$$m_{l_i} = b_0^{-1}\circ \phi_{1}^{-1} \circ b_1^{-1} \circ \cdots \circ \phi_{{i-1}}^{-1}\circ b_{i-1}^{-1}\circ \tau_{A_i} \circ b_{i-1}\circ \phi_{{i-1}} \circ \cdots \circ b_1 \circ \phi_{1} \circ b_0$$
	when $S_i$ is an intersection, or
	$$m_{l_i} = b_0^{-1}\circ \phi_{1}^{-1} \circ b_1^{-1} \circ \cdots \circ \phi_{{i-1}}^{-1}\circ b_{i-1}^{-1}\circ t_{A_i} \circ b_{i-1}\circ \phi_{{i-1}} \circ \cdots \circ b_1 \circ \phi_{1} \circ b_0$$
	when $S_i$ is a tangency.
	
	Finally, we note that for any diffeomorphism $\phi$ of the fiber surface, $\phi\circ \tau_C \circ \phi^{-1} = \tau_{\phi(C)}$ and $\phi\circ t_{\gamma} \circ \phi^{-1} = t_{\phi(\gamma)}$. This is well known and commonly used in the literature in the case of the Dehn twist about a closed curve $C$. It can be verified for the boundary interchange by observing that $\phi\circ t_{\gamma} = t_{\phi(\gamma)}\circ \phi$ as in Figure~\ref{fig:conjugationtwist}.
	
	\begin{figure}
		\centering
		\includegraphics[scale=.5]{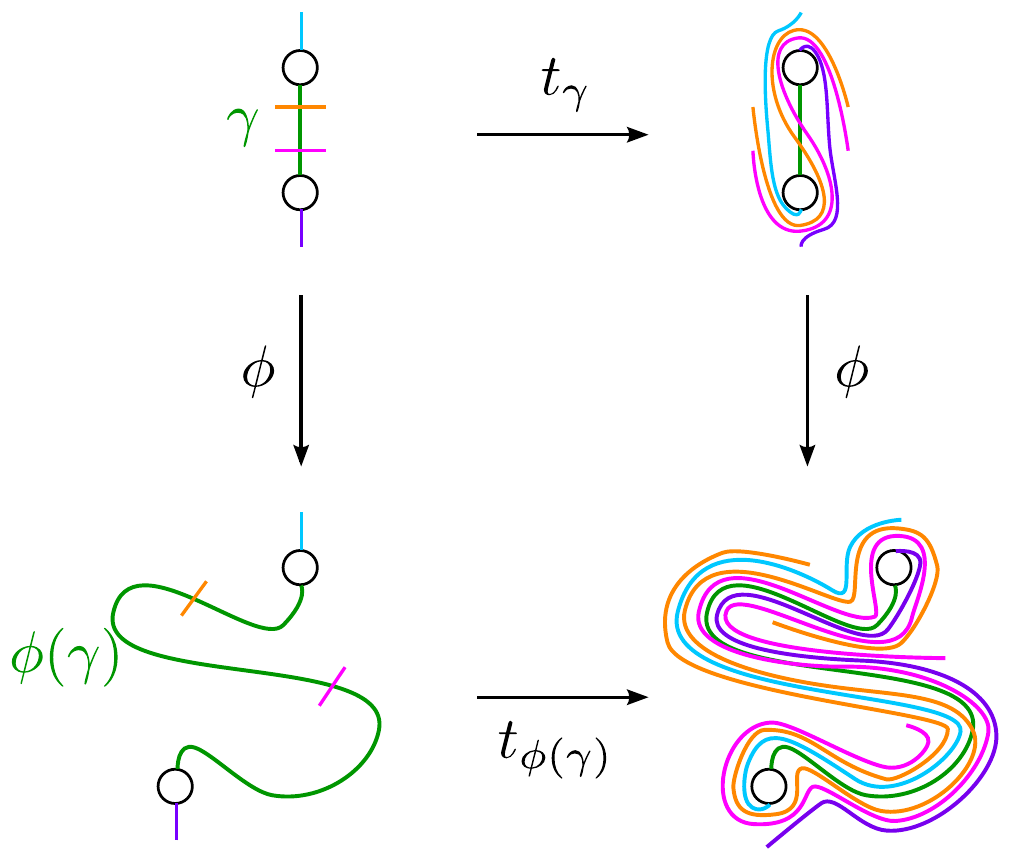}
		\caption{Illustration that for any diffeomorphism $\phi$ of a surface and arc $\gamma$ connecting two boundary components, $\phi\circ t_{\gamma} = t_{\phi(\gamma)}\circ \phi$.}
		\label{fig:conjugationtwist}
	\end{figure}
	
	Therefore $m_{l_i} = \tau_{V_i}$ or $t_{V_i}$ where $V_i$ is the curve or arc given in the statement of the lemma. Our constructions had an arbitrary choice of lifts of braid elements $b_i$ and $\phi_i$ to the mapping class group of the disk with holes, with an undeterminacy coming from boundary parallel Dehn twists. In the formulas we obtained, the boundary parallel Dehn twists commute with the Dehn twists along the vanishing cycles, so the vanishing cycles are not affected by this undeterminacy. Vanishing arcs are changed by  the framing conjugation move: a boundary interchange $t_\gamma$ is replaced by
	$\tau^{-1}_\delta t_\gamma \tau_\delta$, conjugating by the Dehn twist around a boundary parallel curve. This gives the same generalized Hurwitz equivalence class, \cite{BaHa2}, \cite[Definition 2.14]{HRW}.
	\end{proof}

\subsection{From vanishing cycles/arcs to wiring diagrams} 

{A nearly Lefschetz fibration is encoded by a monodromy factorization of the spinal open book, expressed as an ordered product of positive Dehn twists $\tau_{V_i}$ and boundary interchanges $t_{V_i}$.  The next lemma shows how to translate the factorization data into a braided wiring diagram. Note that at this point, we do not keep track of the topology of the corresponding multisection in $D^2 \times D^2$. A priori, the diagram might not produce a configuration of immersed disks. See Remarks~\ref{rem:surfaces} and~\ref{rem:disks}.} 
		{\begin{lemma} Given a minimal nearly Lefschetz fibration on the complement of a multisection in a blowup of $D^2 \times D^2$, encoded by its vanishing cycles and vanishing arcs $V_i$ as above, one can produce an explicit braided wiring diagram 
		$(b_0,S_1,b_1,S_2,\dots, b_{N-1},S_N,b_N)$, possibly with free marked points, 
		such that the Lefschetz fibration data can be recovered from the wiring diagram as in Lemma~\ref{lem:vanishingfromwiring}. 
		\end{lemma}
The construction will be given in the proof.}
		\begin{proof}
		We will inductively build a braided wiring diagram by determining braids $b_i$ and convex curves surrounding a collection of adjacent holes or straight line arcs connecting a pair of adjacent holes $A_i$ such that 
		\begin{eqnarray*}
			V_1 &=& b_0^{-1}(A_1)\\
			V_2 &=& b_0^{-1}\circ \phi_1^{-1}\circ b_1^{-1}(A_2)\\
			& \vdots & \\
			V_{N}&=& b_0^{-1}\circ \phi_1^{-1}\circ b_1^{-1}\circ \cdots \circ \phi_{N-1}^{-1}\circ b_{N-1}^{-1}(A_{N}).
		\end{eqnarray*}
		
		Let $P$ be the fiber of the planar spinal open book, identified with a disk with $r$ holes of equal radius centered on the real axis and labeled $1,\dots, r$ in order of increasing real part. 
		As above, there is a surjective map $\iota: Mod(P) \to B_r$ from the mapping class group of $P$ to the braid group, with kernel generated by the boundary parallel Dehn twists.

		If $V_1$ is a vanishing cycle (closed curve), choose a diffeomorphism $\psi_0:P\to P$ such that $\psi_0(V_1)$ is convex and encloses holes $i,\dots, i+k$. Let $b_0:=\iota(\psi_0)$. Let $A_1 := \psi_0(V_1)$, $S_1:=I^i_{i+k}$ marked with a point $p_1$, and  $\phi_1:=\Delta_{i,i+k}^{-1}$. Note that if $A_1$ only encloses a single hole, $S_1$ is not a singularity, but we will still mark it with a point $p_1$. If $V_1$ is a vanishing arc, choose a diffeomorphism $\psi_0:P\to P$ such that $\psi_0(V_1)$ lies on the real axis of the disk and connects two adjacent boundary components, holes $i$ and $i+1$. Then let $b_0 := \iota(\psi_0)$, $A_1 := \psi_0(V_1)$, $S_1:=T_i$, and $\phi_1 := \sigma_i^{-1}$.
		
		Inductively, we will define $b_{l-1}$, $A_l$ and $\phi_l$ from previously defined values of $b_{j-1}$, $A_j$, $\phi_j$ for $j<l$. Choose a diffeomorphism $\psi_{l-1}:P\to P$ such that $A_l:=\phi_{l-1}\circ b_{l-2} \circ \cdots \circ b_1 \circ \phi_1 \circ b_0(V_l)$ is a convex curve or straight line arc on the real axis. Let $b_{l-1}=\iota(\psi_{l-1})$. If $A_l$ is a closed curve containing holes $i,\dots, i+k$, let $S_l:=I^i_{i+k}$, marked with a point $p_l$, and $\phi_l:=\Delta_{i,i+k}^{-1}$. Again, if $A_l$ happens to only enclose a single hole, $S_l$ will not be a singularity, but we will still mark the point $p_l$. If $A_l$ is an arc connecting the $i$ and $i+1$ strands, let $S_l:=T_i$, and let $\phi_k:=\sigma_i^{-1}$. We let $b_N=id$.
		
		This produces a braided wiring diagram $(b_0,S_1,b_1,S_2,\dots, b_{N-1},S_N,b_N)$. By Lemma~\ref{lem:wiringtocurves}, there is an arrangement $\Gamma$ of symplectically immersed disks in $D_x\times D_y$ realizing this braided wiring diagram. The points $p_1,\dots, p_N$ on the braided wiring diagram extend to decorations on $\Gamma$. Thus Theorem~\ref{thm:smooth-nearlyLefschetz} produces a manifold $W_{(\Gamma, p)}$ supporting a nearly Lefschetz fibration. By Lemma~\ref{lem:vanishingfromwiring}, by construction, the vanishing cycles and arcs of this nearly Lefschetz fibration are exactly $V_1,\dots, V_N$. 		
	\end{proof}
	{\begin{remark}\label{rem:disks} As in Remark~\ref{rem:surfaces}, the braided wiring diagram we construct might not necessarily encode a DJVS immersed disk arrangement, as the corresponding multisection components may have more complicated topology. However, if the nearly Lefschetz fibration is a filling of a spinal open book with multilink binding, then the corresponding
	monodromy factorization must be admissible as in Defintion~\ref{def:admissible}. The combinatorics of the boundary interchanges in this case implies that braided wiring diagram 
	yields a DJVS disk arrangement. 
	\end{remark}
}
	
			\section{Distinguishing the fillings} \label{sec:distinguish}
	
	Constructions of the previous sections encode  symplectic fillings of sandwiched links in terms of immersed disks arrangements. We now explain how to distinguish some of these fillings from one another, and how to show, in certain cases, that a given filling is unexpected (that is, not {homeomorphic} to any Milnor fiber). 
	
	\subsection{Detecting the incidence matrix}
	
	To distinguish fillings, we use the approach of N\'emethi--Popescu-Pampu \cite{NPP}, who showed that certain homological data of a filling (completed to a closed 4-manifold with a cap) is detected by the incidence matrix of the curvettas of the corresponding picture deformation. Although they work in the complex algebraic setting, their proof is topological and thus adapts easily for the DJVS immersed disk arrangements, c.f.~\cite{PS}. In~\cite{NPP} and~\cite{PS}, the statements are given in the smooth category; however, since the corresponding smooth manifolds are distinguished by homological tools, the arguments (if stated carefully) imply that the fillings are not homeomorphic. 
	
	Following~\cite{NPP}, we first distinguish the fillings relative to a fixed marking of the plumbing structure on the boundary. Note that since every homeomorphism of a $3$-manifold is isotopic to a diffeomorphism, orientation-preserving homeomorphisms preserve the plumbing structure, up to topological isotopy. 
	If $W$ and $W'$ are two Stein fillings of $(Y,\xi)$, we say they are \emph{strongly homeomorphic} if there is a homeomorphism $\phi:W \to W'$ such that $\phi$ takes the marking of  $\partial W\cong Y$ to the marking of $\partial W'\cong Y$.  After an isotopy, we can assume that $\phi$ is smooth in a collar neighborhood of $\d W$.
	The hypothesis of the fixed  boundary marking can often be removed, depending on the mapping class group of link (the latter depends on the symmetries of the plumbing graph, see Section~\ref{sec:top-plumbing}).  After addressing the relative case, we will see how to distinguish fillings in the absolute sense, at least under favorable conditions.

	\begin{remark} \label{rem:markingOBD}
	In~\cite{PS}, we described the boundary data in terms of a fixed open book rather than a marking. Because of Lemmas~\ref{OBisotopy} and \ref{binding-from-fibration}, fixing the marking is equivalent to fixing the isotopy class of 
	the spinal open book, if one also knows the multiplicities of the curvettas $C_i$ at $0$. Indeed, by Lemma~\ref{binding-from-fibration}, the marking gives binding components $\beta_i$ for $i=1, \dots, m$ that correspond to the curvettas $C_i$. The remaining  outer binding component $\beta_0$ is then uniquely determined as well:  the auxiliary $(-1)$ vertices adjacent to $E(C_i)$ determine the blow downs of the augmented resolution graph, and therefore the vertex $E_0$.     
	\end{remark}

	For a DJVS arrangement $(\Gamma, p)$ with $\Gamma  =\cup_{i=1}^m
	\Gamma_i$, the set of marked points $p= \{p_j\}_{j=1}^n$ contains all intersection points between the $\Gamma_i$'s and possibly a number of free points. 
	The {\em incidence matrix}
$\mathcal{I}(\Gamma, \{ p_j\})$ has $m$ rows and $n$ columns, 
defined so that its entry $a_{ij}$ at the intersection of $i$-th row and $j$-th column equals the multiplicity of the marked point $p_j$ on  $\Gamma_i$. There is no 
canonical labeling of the points $p_j$, so the incidence matrix is defined only up to permutation of columns. We say that two arrangements 
$(\Gamma, \{ p_j\})$ and $(\Gamma', \{ p'_j\})$ are {\em combinatorially equivalent} if their incidence matrices coincide (up to permutation of columns, i.e. up to relabeling of 
the marked points).

\begin{prop}(\cite[c.f. Theorem 4.3.3]{NPP})\label{incidence-matrix} Let $(Y, \xi)$ be the contact link of a sandwiched singularity, and fix the marking on $Y$ induced by $(C, w)$ as above.  Let   $W$ and $W'$ be two strongly homeomorphic Stein fillings that arise from two DJVS  immersed disk 
arrangements $(\Gamma, \{ p_j\})$ resp. $(\Gamma',\{p_j'\})$ compatible with the same decorated germ $(C, w)$.  
Then the incidence matrices $\mathcal{I}(\Gamma, \{ p_j\})$ and $\mathcal{I}(\Gamma', \{ p_j'\})$ are equal, up to permutation of columns.
\end{prop}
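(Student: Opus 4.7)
The plan is to adapt N\'emethi--Popescu-Pampu's homological argument to the DJVS setting. The idea is to close $W$ and $W'$ into copies of the same blow-up of a $4$-sphere using the cap $U$ from Section~\ref{sandwich-setup}, and then recover the incidence matrices from the classes of the capping spheres $\tilde\Sigma_i$.

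First I would observe that the cap $U$ depends only on $(C,w)$ and the boundary marking, so the same $U$ glues to both $W$ and $W'$, producing $W\cup U \cong \#_n\cptwobar$ and $W'\cup U \cong \#_{n'}\cptwobar$. Since $\phi$ preserves the marking on $Y$, after isotoping it in a collar neighborhood of $\partial W$ so that its restriction to $\partial W$ respects the fibered neighborhoods used in the cap construction, $\phi$ extends to a homeomorphism $\Phi:W\cup U\to W'\cup U$ that is the identity on $U$. In particular $n=n'$, and under $\Phi_*$ the class of each sphere $\tilde\Sigma_i\subset U$ maps to itself.

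Next I would encode the incidence data homologically. In $H_2(W\cup U)\cong\Z^n$, the exceptional classes $\{[e_j]\}_{j=1}^n$ at the marked points form an orthonormal basis for the intersection form $-I$. Because $\Sigma_i=\Gamma_i\cup C_i'\subset B\cup B'=S^4$ is null-homologous and the strict transform $\tilde\Sigma_i$ meets each $e_j$ in $a_{ij}$ positive transverse points (locally modeled on complex intersections), one obtains $[\tilde\Sigma_i]=-\sum_j a_{ij}[e_j]$, and similarly $[\tilde\Sigma_i]=-\sum_j a'_{ij}[e'_j]$ in $W'\cup U$. The isomorphism $\Phi_*$ is an isometry of $(\Z^n,-I)$, so it must be a signed permutation $[e_j]\mapsto \varepsilon_j[e'_{\sigma(j)}]$. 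Applying $\Phi_*$ and comparing the two expressions for $[\tilde\Sigma_i]$ yields $a'_{i,\sigma(j)}=\varepsilon_j\,a_{ij}$ for all $i,j$. Since the entries of the incidence matrices are nonnegative and every column contains a positive entry (every marked point lies on at least one curve), we must have $\varepsilon_j=+1$ for each $j$, so $\mathcal{I}(\Gamma',\{p'_j\})$ is obtained from $\mathcal{I}(\Gamma,\{p_j\})$ by the column permutation $\sigma$.

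The most delicate step, and the one I expect to be the main obstacle, is the extension of $\phi$ to $\Phi$ with $\Phi|_U=\mathrm{id}$. This amounts to showing that a marking-preserving self-homeomorphism of $Y$ can be isotoped to one that respects the standard neighborhoods of the marked fibers used to attach the cap. This should follow from the uniqueness (up to isotopy) of the plumbing structure established in Section~\ref{sec:top-plumbing}, combined with the fact that $Y$ is a rational homology sphere so that the relevant topological data in the cap construction are determined by the marking; this is essentially the argument of~\cite[Theorem 4.3.3]{NPP} adapted to the topological setting of DJVS arrangements.
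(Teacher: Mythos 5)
Your proposal follows essentially the same route as the paper: glue the common cap $U$ (determined by $(C,w)$), extend the marking-preserving homeomorphism by the identity over $U$, and recover the incidence matrix from the intersection numbers of the classes $[\tilde\Sigma_i]$ with the exceptional classes in $\#_{n}\cptwobar$; your signed-permutation/positivity endgame is just a rephrasing of N\'emethi--Popescu-Pampu's unique-positive-basis argument. The step you flag as delicate is precisely where the paper concentrates its effort: it shows, via the spinal open book that the marking determines (Remark~\ref{rem:markingOBD}, Lemma~\ref{OBisotopy}), that the cap's $2$-handles attach along curves isotopic to the binding with the page framing, so the framed attaching data is carried over by $\phi$ — your appeal to uniqueness of the plumbing structure is the right idea but would need to be fleshed out to verify the framings as well as the attaching circles.
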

	
\begin{proof} We only briefly sketch the proof since it is very similar to that in~\cite{NPP} which addresses the case of Milnor fibers. We restate the argument slightly to emphasize that it works up to a homeomorphism despite being stated for diffeomorphisms
in~\cite{NPP, PS}. 
	We find it convenient to invoke open books in our setting, cf~\cite[Section 6.3]{PS}.
If there is a strong homeomorphism $\phi$ that gives an identification $\partial W \cong \partial W' \cong Y$ as above, by Remark~\ref{rem:markingOBD} we can assume the open books on $\partial W$ and $\partial W'$ induced by the Lefschetz 
fibration structures on  $W$ and $W'$ are mapped to each other by $\phi$. 

The incidence data of the arrangements can be recovered from $W$ and $W'$ by closing the fillings with the cap described at the end of Section~\ref{sandwich-setup}: the cap $U$ will be same for both $W$ and $W'$, and $W \cup U$ and $W' \cup U$ are both diffeomorphic to  $\#_{i=1}^n \cptwobar$. We analyze how the cap is attached to the boundary of the filling.    The cap $U$ can be thought of as a $2$-handlebody, with a single $0$-handle and a $2$-handle attached for each component $C_1,\dots, C_m$. Let $K=K_1,\dots, K_m$ denote the belt spheres of these $2$-handles, framed by the handle structure (where the framing is represented by a parallel belt sphere, meaning the boundary of a pushoff of the co-core which is disjoint from the co-core). Turning this handle decomposition upside down, $U$ has $m$ $2$-handles, attached to $\d U$ along the framed link $K \subset \d U$, and a single $4$-handle. Via the gluing map, $K$ is identified with a link in $\partial W$. In the handle decomposition of $U$, the $2$-handles are the tubular neighborhoods of the disks
 $\tilde{\Gamma}_i$, so that the link component $K_i$ is isotopic in $\partial W$ to a boundary component of the page $P$ that corresponds to $C_i$. To capture the framing, consider the nearly Lefschetz fibration 
 on $W$ from Theorem~\ref{thm:smooth-nearlyLefschetz}.
 Fix a {generic} choice of $x_0 \in D_x$, and assume that $\pi_x(K_i)=x_0$ for the chosen copy of $K_i$. By construction of 
 $W\cup U$, the co-core of the corresponding 2-handle $T_i$ of $U$ is the disk bounded by $K_i$ that lies 
 in the complement of {$W$}  in $\{x_0\} \times B_y$ and intersects $C_i$ at one point. 
 It follows that the framing of $K_i$ in $\d W$ is given by the parallel copy of that boundary component in a nearby regular fiber of the nearly Lefschetz fibration. We can isotope this curve to lie in a fiber over $\partial B_x$, showing that the framing is the open book page framing on $K_i$. The upshot is that for the fixed spinal open book on
 $\d W$, the capped-off $4$-manifold $W \cup U \cong \#_{i=1}^n \cptwobar$ is formed by attaching $2$-handles to the framed link $K$ in $\d W$  isotopic to the binding of the open book, where the framings on the components $K_i$ are determined by the open book, and then adding a $4$-handle. Since the strong homeomorphism $\phi$ is smooth in a collar neighborhood of $\d W$ and carries the fixed spinal open book on 
 $\d W$ to the fixed spinal open book on $\d W'$, we see that $\phi$ extends {by the identity on $U$} to a homeomorphism $W \cup U$ to $W' \cup U$ carrying handles of $U$ in $W \cup U$ to the handles of $U$ in $W' \cup U$ (in fact, $\phi$ restricts to $U$ as a diffeomorphism).  
 In the manifold  $W \cup U = \#_{i=1}^n \cptwobar$, 
 we consider the spheres $\tilde{\Sigma}_i \subset U$  given by the union of the germ components $C_i$ capped-off by the cores of the handles of the cap $U$. {Since the extension of $\phi$ is the identity on $U$,} 
 it carries these spheres to the corresponding spheres  ${\tilde{\Sigma}}'_i$ in $W' \cup U$.

As in \cite{NPP}, 
there is a unique basis $\{e_j\}_{j=1}^n$ for $H_2(\#_{i=1}^n \cptwobar)$ {of classes of square $-1$} such that the intersection numbers $[\tilde{\Sigma}_i] \cdot e_j$ are all positive; these classes are given by the exceptional spheres in $\#_{i=1}^n \cptwobar$, which in turn correspond to the marked points $p_j$.  On the other hand, these intersection numbers encode the incidence matrix of $(\Gamma, p)$: $[\tilde{\Sigma}_i] \cdot e_j$ is the multiplicity of the point $p_j$ on $\Gamma_i$. The same is true for the 
filling $W'$ and the arrangement  $(\Gamma', p')$.  Since the map $\phi_*: H_2(W \cup U) \to H_2(W' \cup U)$ sends the classes  $[\tilde{\Sigma}_i]$ to  the corresponding classes $[\tilde{\Sigma}'_i]$, it follows that the inidence matrices  $\mathcal{I}(\Gamma, \{ p_j\})$ and $\mathcal{I}(\Gamma', \{ p_j'\})$ are identical (up to labeling of the columns).   
\end{proof}
	
To distinguish fillings in the absolute sense (without fixing the markings), we appeal to 
Corollary~\ref{cor:diff}: the condition that a homeomorphism should preserve the marking of the boundary is automatic if the corresponding plumbing graph has no symmetries.  
	
	\begin{cor}\label{cor:nosymmetry}
		Suppose the minimal normal crossing resolution of a sandwiched singularity $(X,0)$ is given by a plumbing graph with no non-trivial automorphisms.  Let $(W,\omega)$ and $(W',\omega')$ be two minimal symplectic fillings of the contact link $(Y,\xi)$. If $W$ and $W'$ are homeomorphic, the incidence matrices of the corresponding DJVS immersed disk arrangements are equivalent. 
	\end{cor}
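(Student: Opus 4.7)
The plan is to reduce the statement to Proposition~\ref{incidence-matrix} by upgrading an arbitrary homeomorphism $\phi\colon W\to W'$ to a strong homeomorphism, i.e.\ one that respects the marking of the plumbing structure on the boundary. Since $W$ and $W'$ are symplectic fillings of the fixed $(Y,\xi)$, they come with canonical orientations, and a homeomorphism between them restricts to an orientation-preserving homeomorphism of $\partial W\cong Y\cong \partial W'$. By standard 3-manifold theory, every homeomorphism of a compact 3-manifold is isotopic to a diffeomorphism, so after composing with an isotopy supported in a collar of the boundary I may assume that $\phi|_{\partial W}$ is an orientation-preserving diffeomorphism of $Y$.

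The next step is to apply Corollary~\ref{cor:diff}, which uses the hypothesis that the plumbing graph has no non-trivial automorphisms: any orientation-preserving diffeomorphism of $Y$ preserves (up to isotopy) the marking coming from a decorated germ $(C,w)$. Both fillings $W$ and $W'$ arise, by Theorem~\ref{thm:maintechnical}, from DJVS immersed disk arrangements $(\Gamma,p)$ and $(\Gamma',p')$ compatible with the \emph{same} decorated germ $(C,w)$, and therefore induce the same marking on the boundary (up to isotopy). Consequently $\phi|_{\partial W}$ carries the marking of $\partial W$ to the marking of $\partial W'$ up to an isotopy in $Y$; extending this isotopy into a collar neighborhood of $\partial W$ inside $W$ and modifying $\phi$ accordingly, I obtain a new homeomorphism $\widetilde\phi\colon W\to W'$ which literally sends marked fibers to marked fibers and which is smooth in a collar. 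This is precisely a strong homeomorphism in the sense of Section~\ref{sec:distinguish}.

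Having set up a strong homeomorphism, I apply Proposition~\ref{incidence-matrix}: the incidence matrices $\mathcal{I}(\Gamma,p)$ and $\mathcal{I}(\Gamma',p')$ agree up to permutation of columns, which is exactly the conclusion that the arrangements are combinatorially equivalent.

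The step I expect to be the main subtlety is the passage from ``$\phi|_{\partial W}$ preserves the marking up to isotopy'' to an honest strong homeomorphism $\widetilde{\phi}\colon W\to W'$. This requires choosing an ambient isotopy of $Y$ that realizes the marking correspondence and then extending it into a collar so that the modification does not disturb the interior of $W$; this is routine but should be stated carefully. The orientation bookkeeping is also worth a line, since Corollary~\ref{cor:diff} explicitly addresses \emph{orientation-preserving} diffeomorphisms, and one must note that a homeomorphism between two symplectic fillings of a fixed contact 3-manifold is necessarily orientation-preserving on the boundary.
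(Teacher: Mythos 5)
Your proposal is correct and follows essentially the same route as the paper, which derives the corollary by noting that any homeomorphism of the fillings restricts (after isotoping to a diffeomorphism) to a boundary map preserving the marking via Corollary~\ref{cor:diff}, and then invoking Proposition~\ref{incidence-matrix}. Your explicit collar-adjustment to produce a strong homeomorphism is exactly the implicit step in the paper's one-line deduction.
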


One can generalize the above statement slightly: it  holds whenever every automorphism of the plumbing graph fixes the vertices carrying the marking.

\subsection{Unexpected fillings}	
	
	We will say that an incidence matrix is realized by a picture deformation if there is a compatible decorated germ 
	$(C, w)$ and a picture deformation $(C^t, p^t)$  whose incidence relations for $t\neq 0$ are given by the matrix (up to a permutation of columns). A subarrangement of a DJVS arrangement  $(\Gamma, p)$ is a subcollection of immersed disks $\Gamma_{i_1}, \dots \Gamma_{i_r}$ with all of the marked points  
	$p_{j_1}, \dots, p_{j_s} \in p$ that lie on these disks. We have the following corollary to Proposition~\ref{incidence-matrix}. 
	
\begin{cor} \label{non-realizable} (1) Let $(\Gamma, p)$ be a DJVS immersed disk arrangement encoding the filling $W_{(\Gamma, p)}$ of a link of sandwiched singularity $(Y, \xi)$. If 
 $W_{(\Gamma, p)}$ is strongly homeomorphic to a Milnor fiber, then the incidence matrix  $\mathcal{I}(\Gamma, p)$
 is realized by a picture deformation. 
 
 (2) Suppose that $(\Gamma, p)$  has a DJVS subarrangement   $(\Gamma_{i_1}, \dots \Gamma_{i_r}, p_{j_1}, \dots, p_{j_s})$ whose incidence matrix is not realizable by a picture deformation. Then 
 $W_{(\Gamma, p)}$ is an unexpected filling. 
\end{cor}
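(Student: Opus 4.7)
The plan is to derive (1) directly from Proposition~\ref{incidence-matrix} and then deduce (2) from (1) by restricting picture deformations to sub-configurations. For (1), I would first invoke the de Jong--van Straten theorem: any Milnor fiber of a smoothing of a (possibly different) analytic singularity with link $(Y,\xi)$ arises as $W_{(C^t,p^t)}$ for a picture deformation of some decorated germ $(C',w)$ encoding that analytic type. Since $(C',w)$ is topologically equivalent to $(C,w)$ with the same weights, Remark~\ref{rmk:top-vs-an} shows that $(\Gamma,p)$ is also compatible with $(C',w)$ after a small boundary isotopy. Then both $W_{(\Gamma,p)}$ and the given Milnor fiber are DJVS fillings compatible with the same decorated germ $(C',w)$, and the strong homeomorphism between them lets me apply Proposition~\ref{incidence-matrix} to conclude $\mathcal{I}(\Gamma,p)=\mathcal{I}(C^t,p^t)$ up to permutation of columns, which is exactly the required realizability.

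For (2), I would argue by contrapositive: assume $W_{(\Gamma,p)}$ is homeomorphic to a Milnor fiber. Since Part~(1) requires a strong homeomorphism, the first step is to promote plain homeomorphism to a strong one, possibly after relabeling the components of $\Gamma$ according to a symmetry of the plumbing graph. This rests on Section~\ref{sec:top-plumbing}: any orientation-preserving self-homeomorphism of $Y$ preserves the plumbing structure up to isotopy and hence acts on the marking through a graph automorphism, which amounts to a permutation of the rows of the incidence matrix. After this relabeling, Part~(1) yields a picture deformation $(C^t,p^t)$ of some decorated germ in the topological type of $(C,w)$ whose incidence matrix coincides with $\mathcal{I}(\Gamma,p)$ up to permutations of rows and columns.

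It remains to restrict this picture deformation to the sub-configuration indexed by $\{i_1,\dots,i_r\}$ and $\{j_1,\dots,j_s\}$ and verify that it is itself a picture deformation of the corresponding sub-germ. Conditions (ii)--(iv) of Definition~\ref{def:picturedef} are immediate: only transverse multiple points survive in the restricted family, every surviving intersection is marked, and the weight hypothesis holds by definition because the sub-germ inherits weights equal to the row sums of the sub-incidence matrix. The key technical point is condition (i), $\delta$-constancy. Writing
\[
\delta\Bigl(\bigcup_{k} C^t_{i_k}\Bigr)=\sum_{k}\delta(C^t_{i_k})+\sum_{k<l}(C^t_{i_k}\cdot C^t_{i_l})_0,
\]
each term $\delta(C^t_{i_k})$ is constant by equisingularity of the individual branches, and each pairwise intersection multiplicity at the origin equals the stable total count of transverse intersections between $C^t_{i_k}$ and $C^t_{i_l}$ for $t\neq 0$, a quantity preserved by the $\delta$-constancy of the full deformation. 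Thus the sub-family is $\delta$-constant, so it is a picture deformation of the sub-germ whose incidence matrix equals the sub-incidence matrix of $(\Gamma,p)$, contradicting the non-realizability hypothesis. I expect the main obstacle to be the careful interplay between plain and strong homeomorphism in Part~(2), ensuring that graph automorphisms only contribute a harmless permutation of rows; the $\delta$-constancy check for the sub-deformation is a secondary technicality that should follow from standard equisingularity and conservation of intersection numbers.
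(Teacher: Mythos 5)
Part (1) of your proposal is exactly the paper's argument: invoke de Jong--van Straten to present the Milnor fiber as a picture deformation, use Remark~\ref{rmk:top-vs-an} to see that $(\Gamma,p)$ is compatible with the same decorated germ, and apply Proposition~\ref{incidence-matrix}. The core of your Part (2) -- restrict the picture deformation realizing $\mathcal{I}(\Gamma,p)$ to the chosen subset of components -- is also the paper's argument; the paper simply asserts that the restriction of a picture deformation to a subset of germ components is again a picture deformation, and your verification of conditions (ii)--(iv) of Definition~\ref{def:picturedef} is fine. Your $\delta$-constancy check is the right point to elaborate, but the justification ``constant by equisingularity of the individual branches'' is off, since equisingularity only holds among the fibers with $t\neq 0$ and says nothing about the jump at $t=0$; the clean argument is that a $\delta$-constant deformation admits a simultaneous normalization (Teissier), which visibly restricts to any subset of branches, or alternatively combine upper semicontinuity of $\delta$ on each branch with conservation of the pairwise intersection numbers. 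This is a repairable wording issue, not a structural one.

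The genuine gap is your attempt to promote a plain homeomorphism to a strong one in Part (2). You assert that any orientation-preserving self-homeomorphism of $Y$ acts on the marking through a graph automorphism and that this ``amounts to a permutation of the rows of the incidence matrix.'' A graph automorphism permutes the pieces of the plumbing, but there is no reason it should preserve the set of vertices carrying curvettas: the marking is determined by the chosen augmentation $G'$, which is extra data not intrinsic to the decorated resolution graph, so a symmetry can carry the marking to the marking associated with a different augmentation (in particular to vertices carrying no curvettas, or carrying curvettas of different multiplicities and weights). In that situation no relabeling of the components of $\Gamma$ produces a marking-preserving homeomorphism, and Proposition~\ref{incidence-matrix} -- which requires both fillings to come from arrangements compatible with the same decorated germ and a homeomorphism preserving the fixed marking -- does not apply. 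This is precisely why the paper retains the word ``strongly'' in Part (1), proves Corollary~\ref{cor:nosymmetry} only for graphs with no nontrivial automorphisms (or, as noted there, automorphisms fixing the marked vertices), and remarks after the corollary that ``strongly'' can be dropped only under that hypothesis. So your unconditional contrapositive claims more than the argument delivers: either read (2) with respect to strong homeomorphisms, as the paper's proof implicitly does, or add the symmetry hypothesis before relabeling rows.
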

	
	By Corollary~\ref{cor:nosymmetry}, the word ``strongly'' can be omitted if the plumbing graph for $Y$ has no symmetries. 
 
\begin{proof}
Suppose that  $W_M$  is a  Milnor fiber for a smoothing of some surface singularity with the link~$Y$; let $(C, w)$ be the decorated germ encoding the (analytic type of) the sandwiched singularity.  By the de Jong--van Straten theory,  $W_M$ corresponds to a picture deformation $(C^t, q)$ of $(C, w)$, with the collection of marked points $q$. 
By Remark~\ref{rmk:top-vs-an}, the DJVS arrangement  $(\Gamma, p)$ encoding the filling $W_{(\Gamma, p)}$ is compatible with the germ $(C, w)$. 
By Theorem~\ref{incidence-matrix}, if  $W_{(\Gamma, p)}$  is strongly homeomorphic to $W_M$, 
the incidence matrices  $\mathcal{I}(\Gamma, p)$ and  $\mathcal{I}(C^t, q)$ are the same, up to permutation of columns.

For the second statement, observe that any subarrangement of a picture deformation can also be realized as a picture deformation, by simply restricting the deformation to the corresponding subset of germ components. Thus if the subarrangement cannot be realized by a picture deformation, the larger arrangement cannot either.
\end{proof}

In \cite[Section 7]{PS}, we studied unexpected fillings of the simplest type of sandwiched singularities, those with reduced fundamental cycle, and found a necessary condition for certain disk arrangements to be realizable by picture deformations.  	In particular, we found examples of disk arrangements that are not realizable (Figures 15, 17 and their generalizations in \cite[Section 7]{PS}). Here is the upshot of our previous work, stated in a slightly modified form for brevity:

\begin{theorem} \label{thm:old} Let $(C, w)$ be a decorated germ such that $C$ is a pencil of $m \geq 10$ lines, and the weight $w = (w_1, \dots w_{m})$ with $w_i\geq m-1$ for all $i$. Then there exists a DJVS disk arrangement $(\Gamma, p)$ which is compatible with $(C, w)$ and not realizable by a picture deformation. 

Accordingly, suppose that $Y$ is a Seifert fibered space whose star-shaped plumbing graph has $m\geq 10$ legs, with the length of each leg is at least $m-1$. The central vertex has self-intersection $-m-1$, 
and all other vertices have self-intersection $-2$. Then $Y$ with its canonical contact structure as a link of singularity admits an unexpected Stein filling.  

Further, if $m=2N+5$  for $N\geq 4$, then every Seifert space as above admits at least $N+1$ pairwise non-homeomorphic Stein fillings. These fillings correspond to DJVS disk arrangements of $2N+5$ disks, not realizable by picture deformations and distingushed from one another by their incidence matrices. 
\end{theorem}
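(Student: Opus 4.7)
The plan is to reduce the theorem to the smooth disk arrangements already constructed in our previous work~\cite{PS}, and then to invoke Corollary~\ref{non-realizable} together with Proposition~\ref{incidence-matrix} to deduce that the resulting Stein fillings are unexpected and pairwise non-homeomorphic. Since the curve germ $C$ is a pencil of $m$ smooth lines, a compatible DJVS arrangement consists of $m$ smooth disks meeting each other transversally at marked points, with optional free marked points; every such arrangement automatically satisfies Definition~\ref{def:DJVSarrangement} once we choose coordinates so that no line is vertical. The constraint $w_i\geq m-1$ is simply the statement that the total marked multiplicity on each component must match the weight, which is exactly the total contribution of the $m-1$ pairwise intersections plus $w_i-(m-1)$ extra free marked points.

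The starting point is the family of smooth disk arrangements of $m\geq 10$ components constructed in~\cite[Section 7]{PS} (in particular Figures 15, 17 and their generalizations). These arrangements realize incidence matrices whose configuration of higher-multiplicity marked points on $m\geq 10$ lines cannot arise from any picture deformation of a pencil of lines; the obstruction, established in~\cite{PS}, is combinatorial and symmetric under relabeling of the components. For weights $w_i\geq m-1$, I take the same disk configuration and simply add $w_i-(m-1)$ free marked points on each $\Gamma_i$ in general position. Adding free marked points preserves both the DJVS property and the non-realizability obstruction: a picture deformation realizing the enlarged incidence matrix would, upon forgetting the free marked points, yield a picture deformation realizing the original unrealizable matrix.

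To produce the unexpected fillings, observe that the star-shaped plumbing graph described, with central self-intersection $-(m+1)$ and legs of $(-2)$-vertices of prescribed length, is exactly the minimal resolution of a sandwiched singularity whose decorated germ is the pencil of $m$ lines with weights $w_i$ determined by the leg lengths via Remark~\ref{rmk:-2legs}. Theorem~\ref{thm:maintechnical}(1) then produces a Stein filling from each DJVS arrangement above. Corollary~\ref{non-realizable}(1) gives that no such filling is strongly homeomorphic to a Milnor fiber. The plumbing graph admits a nontrivial $S_m$-symmetry permuting the $m$ legs, so to upgrade ``strongly homeomorphic'' to ``homeomorphic,'' I note that any homeomorphism of $Y$ acts on the marking only by this graph automorphism; since the combinatorial obstruction from~\cite{PS} is invariant under relabeling the components, non-realizability is preserved under the $S_m$-action, and the filling is genuinely unexpected.

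For the final assertion with $m=2N+5$, I choose $N+1$ members of the family from~\cite{PS} whose incidence matrices remain pairwise inequivalent after quotienting by the $S_m$-action on columns and rows; the family in~\cite{PS} is explicitly parametrized so that a discrete invariant (e.g.\ the multiset of marked-point multiplicities together with a secondary combinatorial invariant) distinguishes at least $N+1$ of them. Each arrangement produces an unexpected filling by the argument above, and Proposition~\ref{incidence-matrix}, together with Corollary~\ref{cor:nosymmetry} applied modulo the $S_m$-action, shows that these $N+1$ fillings are pairwise non-homeomorphic. The main obstacle lies in the combinatorial verification that the chosen arrangements' incidence data remain non-realizable and pairwise distinct after symmetric relabeling; this is exactly the content of~\cite[Section 7]{PS}, and the only new ingredient in the current setting is the uniform extension to higher weights by free marked points, which is handled by the free-marking argument above.
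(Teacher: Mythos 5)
Your proposal is correct and matches the paper's treatment: the paper gives no independent proof of this statement, presenting it as ``the upshot of our previous work'' in \cite[Section 7]{PS}, and your argument is precisely that reduction --- import the non-realizable configurations and their pairwise-distinct incidence matrices from \cite{PS}, absorb the larger weights $w_i\geq m-1$ by adding free marked points, and pass from ``strongly homeomorphic'' to ``homeomorphic'' by noting that realizability and the incidence data are invariant under the relabeling induced by the leg-permuting graph symmetries. All the genuinely hard content (non-realizability of the specific arrangements and the count of $N+1$ inequivalent incidence matrices) is correctly attributed to \cite{PS}, which is exactly how the paper uses it.
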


Together with Theorem~\ref{thm:old}, Corollary~\ref{non-realizable} allows us to produce large families of sandwiched singularities with unexpected fillings, extending the results of~\cite{PS}:

\begin{cor} \label{cor:include10lines} Suppose that $C = \cup_{i=1}^m C_i$ is a germ of a singular plane curve at $0$ such that there are $m\geq 13$ lines with distinct slopes among the irredicuble components $C_i$ of $C$. Then the link of the sandwiched singularity defined by $(C, w)$  admits at least $\frac{1}{2}(m-5$)  unexpected Stein fillings if  the weights
$w_i$, $i=1, \dots, m$  are sufficiently large.  
\end{cor}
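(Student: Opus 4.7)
The plan is to reduce to Theorem~\ref{thm:old} by exhibiting subarrangements supported on the line components of $C$. Let $L_1, \dots, L_M$ denote $M$ of the line components of $C$, where $M = m$ if $m$ is odd and $M = m-1$ if $m$ is even, so that $M = 2N+5$ for some $N \geq 4$ (using $m \geq 13$). Let $C_{M+1}, \dots, C_k$ be the remaining components of $C$ (possibly none). I would apply Theorem~\ref{thm:old} to the sub-germ $(L_1\cup\cdots\cup L_M,(w_1,\dots,w_M))$: provided each $w_i \geq M-1$, which is ensured by the ``sufficiently large weights'' hypothesis (or, if necessary, by enlarging the graph via the $(-2)$-leg construction of Remark~\ref{rmk:-2legs}), this produces at least $N+1\geq \tfrac{1}{2}(m-5)$ DJVS immersed disk arrangements $(\Gamma^L_s,p^L_s)$ compatible with the line sub-germ, with pairwise non-equivalent incidence matrices, none of which is realizable by any picture deformation of this sub-germ.

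Next, I would extend each $(\Gamma^L_s,p^L_s)$ to a DJVS arrangement $(\Gamma_s,p_s)$ compatible with the full decorated germ $(C,w)$. For this, start with some fixed picture deformation of $(C,w)$ (for example, the Scott deformation) and take the immersed disk representatives $\Gamma^\star_{M+1},\dots,\Gamma^\star_k$ of the non-line components from it. After a small boundary-fixing symplectic isotopy, one can assume these additional disks meet the line disks $\Gamma^L_{s,i}$ transversely in positive multipoints and meet each other transversely as well, with all such intersections disjoint from the original marked points $p^L_s$. Marking all new intersections with the appropriate multiplicities and adding free marked points on the non-line components as needed to realize the prescribed weights $w_{M+1},\dots,w_k$ yields a DJVS arrangement $(\Gamma_s,p_s)$ compatible with $(C,w)$ that contains $(\Gamma^L_s,p^L_s)$ as a subarrangement in the sense of Corollary~\ref{non-realizable}.

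Since the incidence matrix of the line subarrangement cannot be realized by any picture deformation, Corollary~\ref{non-realizable}(2) will force $W_{(\Gamma_s,p_s)}$ to be an unexpected Stein filling of $(Y,\xi)$. The line subarrangements for distinct $s$ have pairwise non-equivalent incidence matrices, so the full arrangements do as well, and Proposition~\ref{incidence-matrix} will then show that $W_{(\Gamma_s,p_s)}$ and $W_{(\Gamma_{s'},p_{s'})}$ are not strongly homeomorphic for $s\neq s'$. To upgrade to non-homeomorphic in the absolute sense, I would arrange the weights (and, if needed, attach $(-2)$-chains of varying lengths per Remark~\ref{rmk:-2legs}) so that the resulting resolution plumbing graph admits no non-trivial automorphism, and then invoke Corollary~\ref{cor:nosymmetry}.

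The hard part will be the extension step: the line arrangements produced by Theorem~\ref{thm:old} are non-algebraic and differ combinatorially from any picture deformation of the line pencil, so one must carefully verify that they can still be combined with the non-line components of $C$ while preserving the DJVS structure and without disturbing the line subarrangement's incidence data. The needed flexibility comes from the fact that a DJVS arrangement only requires transverse positive multipoint intersections and arbitrary boundary-compatible immersed disks---precisely the symplectic rather than algebraic level of rigidity---and the combinatorics of the line subarrangement remain intact, which is all Corollary~\ref{non-realizable}(2) needs.
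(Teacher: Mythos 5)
Your proposal is correct and follows essentially the same route as the paper: apply Theorem~\ref{thm:old} to the line components, adjoin an arbitrary picture deformation (e.g.\ the Scott deformation) of the remaining components with all new intersections marked, use the large-weight hypothesis to guarantee enough marked points, and conclude via Corollary~\ref{non-realizable} with the incidence matrices of the line subarrangements distinguishing the fillings. Your explicit parity bookkeeping and the optional symmetry-breaking appeal to Corollary~\ref{cor:nosymmetry} are minor refinements of the same argument.
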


\begin{proof} The line components of $C$ can be used to create  DJVS arrangements 
as in Theorem~\ref{thm:old} that are not realizable by any picture deformation.   Each of these arrangements, together with arbitrary picture deformations of the remaining components of $C$, gives a DJVS arrangement compatible with $(C, w)$ that encodes an unexpected filling by Theorem~\ref{non-realizable}. The hypothesis on sufficiently large weights is used to make sure that there are enough marked points for all interesection points in the resulting arrangement. All the arrangements will produce pairwise distinct fillings because they have subarrangements with distinct incidence matrices. 
\end{proof}

\begin{proof}[Proof of Theorem~\ref{thm2-intro}] Fix $N>0$, set $m=2N+5$ and let $S$ be a star-shaped plumbing graph as in 
Corollary \ref{cor:include10lines}. The graph $S$ has $m$ legs where every vertex has self-intersection
$-2$, the central vertex $v^*$ has self-intersection $-m-1$. If $S$ is augmented to a graph $S'$ by adding  $(-1)$ vertices on the ends of all the legs,  the corresponding plane curve germ $C_S$ consists of $m$ lines. 
	Choose an arbitrary sandwiched augmentation $G'$ of $G$ by $(-1)$ vertices and let $\mathcal{C}_G$ be the plane curve germ
	associated with that augmentation. Let $E_0$ be the last vertex to be blown down when the augmented graph $G'$ is contracted to a smooth point, as in Definition~\ref{def:EC}. We build a new 
 plumbing graph $R$ out of $G$ and $S$: we decrease the central vertex self-intersection of $S$ by 
$1$ (so that  now $v^*\cdot v^*= -m -2$), keep self-intersections of all the other vertices of $S$ and $G$ the same, and connect $v^*$ in $S$ to $E_0$ in $G$ by an edge.  Then $R$ is a sandwiched graph: augment it to a graph $R'$ 
by adding the $(-1)$ vertices that augment $G$ to $G'$, together with the $(-1)$ vertices on the end of each leg in $S$.  The resulting graph $R'$ can be blown down to a smooth point, by 
following the blow-down procedure for $G'$ and  blowing down all the legs of $S$. At the penultimate stage, 
we have two vertices connected by an edge, $E_0$ with self-intersection $-1$ (equipped with the curvettas from $G$) and $v^*$ with self-intersection $-2$ (equipped with the curvettas from $S$); 
we blow down first $E_0$, then $v^*$.  It follows that the curve germ $C_R$ for $R$ contains $C_S$ as a subgerm.   
The remaining irreducible components of $C_R$ are in 1-1 correspondence with the components of $C_G$ since they come from the curvettas on $E_0$ after an additional blowdown (but the resulting subgerm ${C}_G^*$ of $C_R$ is ``more singular'' than $C_G$ because of the additional blowdown).   As always, the weights of the curvettas in 
the decorated germ are controlled by the blowdowns and are determined by the graph with its augmentation. 

We would like to construct an unexpected DJVS arrangement $\Gamma$ compatible with the decorated germ $C_R$ with its weights. 
Since $C_R$ contains the $m$ lines forming $C_S$ as a subarrangement, we can take the union of the DJVS arrangement $\Gamma_S$ of Theorem~\ref{thm:old} and an arbitrary picture deformation $\Gamma_{G}^*$ of the complementary subgerm  $C^*_G$ (e.g. its Scott deformation), making sure that each pair of disks from two subgerms $C_S$ and $C^*_G$ intersect transversely at generic double points. In terms of braided wiring diagrams, see Figure~\ref{fig:wiringsubgraph}. Note that we run into a problem with weights here:  all intersection points of the resulting arrangement 
$\Gamma$ need to be decorated with marked points, but there are a lot of intersection points between disks from $C_S$ and $C_G^*$, and the weights induced by the augmented graph $R$ may not be sufficiently high. To rectify this, we modify the graph $R$ to increase  the weights of all curvettas 
without changing the germ $C_R$. We fix the arrangement $\Gamma$ that we constructed, mark all intersections between its disks, and let $w_{max}$ to be maximal number of marked points (counted with multiplicity) on any of the disks $\Gamma_i$ in $\Gamma$.  
Now, starting with the given augmentation of $R$, 
replace all the $(-1)$ vertices by sufficiently long 
chains of $-2$ vertices to make a graph $K$. Note that $K$ still contains $G$ as a subgraph.  
We can augment the graph $K$ by putting $(-1)$ vertices at the end of each chain; the resulting curve germ $C_K$ is identical to $C_R$ but the weights increase by the quantities given by the length of the $(-2)$ chains (see Remark~\ref{rmk:-2legs}).   Therefore we can make the weight of each curvetta in $C_K$ greater than $w_{max}$, so that the DJVS arrangement we constructed (possibly with additional free marked points) is compatible with the decorated germ $C_K$.   

\begin{figure}
	\centering
	\includegraphics[scale=.5]{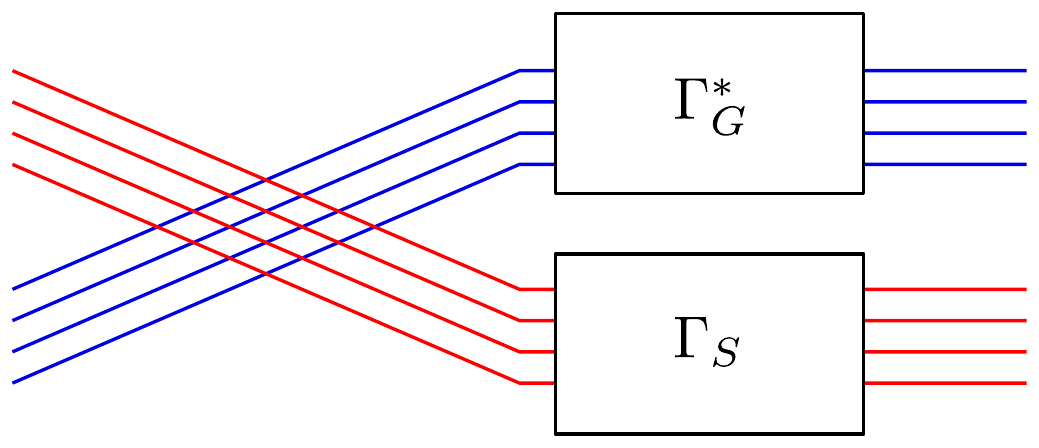}
	\caption{The DJVS arrangement $\Gamma$ compatible with the decorated germ $C_K$ built from the union of DJVS arrangements $\Gamma_G^*$ and $\Gamma_S$ in the Proof of Theorem~\ref{thm2-intro}. Each branch of $\Gamma_S$ intersects each branch of $\Gamma_G^*$ at a transverse double point. Thus each component of $\Gamma_S$ (which has multiplicity $1$) has $d_i$ transverse intersections with a component of $\Gamma_G^*$ whose multiplicity is $d_i$.}
	\label{fig:wiringsubgraph}
\end{figure}

\begin{figure}
	\centering
	\includegraphics[scale=.5]{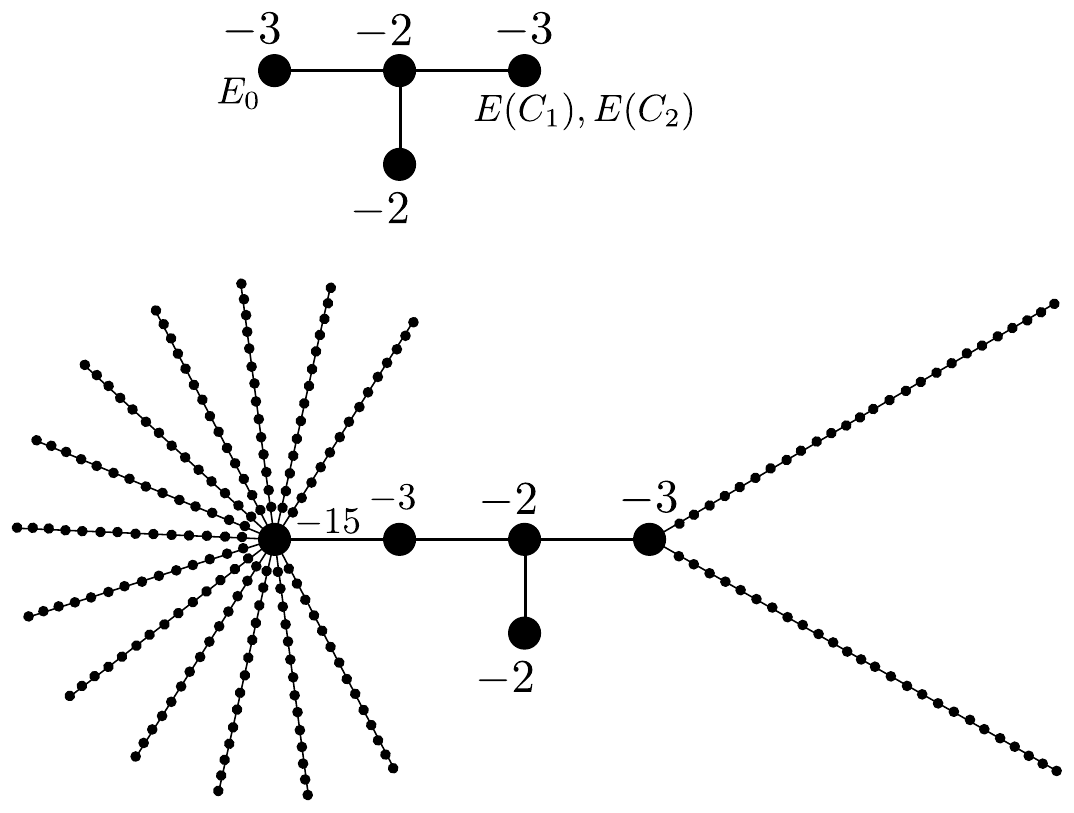}
	\caption{Above: The sandwiched dual graph $G$ from Figure~\ref{fig:examplesandwich}. Below: A sandwiched dual graph $K$ containing $G$. Unlabeled small vertices have self-intersection $-2$. The link of the sandwiched singularity with normal crossing resolution $K$ admits at least $5$ pairwise non-homeomorphic unexpected Stein fillings. As in the proof of Theorem~\ref{thm2-intro} with $N=4$, $K$ is obtained from $G$ by adding a vertex $v^*$ to the left-most $-3$ vertex associated with $E_0$ such that the self-intersection on $v^*$ is $-15$, with $13$ additional legs attached to $v^*$ of length $14$ with all $-2$-vertices, and $2$ additional legs, each of length $26$ and with all $-2$-vertices, attached to the $-3$ vertex associated with $E(C_1)=E(C_2)$ on the far right.}
	\label{fig:unexpectedsubgraph}
\end{figure}

\end{proof}

\begin{remark} It is not hard to give an estimate of $w_{max}$ in terms of the number of vertices in the graphs $G$ and $S$ and the multiplicities of the curvettas of $C_G$ at $0$: each of the lines corresponding to $S$ intersects a curvetta of multiplicity $d$ at $d$ generic points. It is also possible to be more strategic with the construction of arrangement if one wants to build a graph $K$ with a smaller number of the $(-2)$ vertices by tailoring the pairwise intersections between components of $C_S$ and $C'_G$ to occur at already marked singular points of their DJVS subarrangements. 
\end{remark}

\begin{remark} \label{rmk:insideout} The incidence matrix  $\mathcal{I}(\Gamma, p)$ encodes the homological data of the vanishing cycles of the nearly Lefschetz fibration $W_{(\Gamma, p)}$.  In the simplest case of classical open books and fibrations (or equivalently, decorated germs with smooth components),  each disk of $\Gamma$ corresponds to a single hole of 
the disk-with-holes fiber $P$. Then, an interesection point of several components of $\Gamma$ corresponds to a vanishing cycle that encloses the corresponding holes in the fiber: this fixes the homology class of the vanishing cycle but not its isotopy class. 
An incidence matrix   $\mathcal{I}(\Gamma, p)$ is realizable by a picture deformation if and only if the Lefschetz fibration on  $W_{(\Gamma, p)}$ has the vanishing cycles with the same homological data as the Lefschetz fibration on a Milnor fiber. (For the ``if'' part, represent the Milnor fiber as a picture deformation of a compatible germ, which will provide a 
Lefschetz fibration whose fiber is identified with that of the fibration $W_{(\Gamma, p)}$.)
It is interesting to note that in the definition of $\mathcal{I}(\Gamma, p)$ and in the construction of the cap of a filling, the outer boundary component of the open book plays a special role. The world of Lefschetz fibrations is more symmetric: the fiber is a sphere with holes with no distinguished boundary component. Stating which holes are enclosed 
by a curve requires distinguishing ``the inside'' and ``the outside'' of the curve, so one needs to fix the choice the outer boundary component of the disk with holes; however, 
the homology class of the curve on  the punctured sphere is well defined without this choice. There is an obvious way to translate the data between different choices of the outer boundary component when the fiber is ``turned inside out'' (see Proposition \ref{prop:insideout} below for a specific example). Making a different choice of the outer boundary produces a DJVS arrangement $(\Gamma', p')$ different from $(\Gamma,p)$: the arrangement $(\Gamma',p')$ will be compatible with a decorated germ $(C', w')$ given by a different combinatorial choice of the augmented resolution graph, since the outer component corresponds to the vertex $E_0$, see \cite[Lemma 4.3]{PS} and Section~\ref{sandwich-setup}. The previous discussion implies that if one of the arrangements  $(\Gamma, p)$ and $(\Gamma', p')$ has incidence matrix  realizable by the picture deformation, then the other also does, because the Lefschetz fibrations  $W_{(\Gamma, p)}$ and     $W_{(\Gamma', p')}$ are identical. 

A similar statement is true for the general spinal case, but one can only turn the open book inside out if there is another boundary compoment of multiplicity 1. We leave the details to the reader.
\end{remark}

We can use the previous remark to generate a new kind of a non-realizable arrangement. 
All of our unexpected fillings above are using the same type of examples as a key ingredient: we need a collection of lines with distinct slopes to be a subgerm of the given decorated germ. By contrast, Proposition~\ref{prop:insideout} gives examples where all the curvettas have a high-order tangency with one another. The construction is based on a trick from Remark~\ref{rmk:insideout} and doesn't produce any new unexpected Stein fillings, these examples shed some more light on arrangements {combinatorially} non-realizable by picture deformations. We give both a topological proof of Proposition~\ref{prop:insideout} and an analytic explanation of same fact in Proposition~\ref{prop:analytic-insideout}.

	\begin{figure}
		\centering
		\includegraphics[scale=.5]{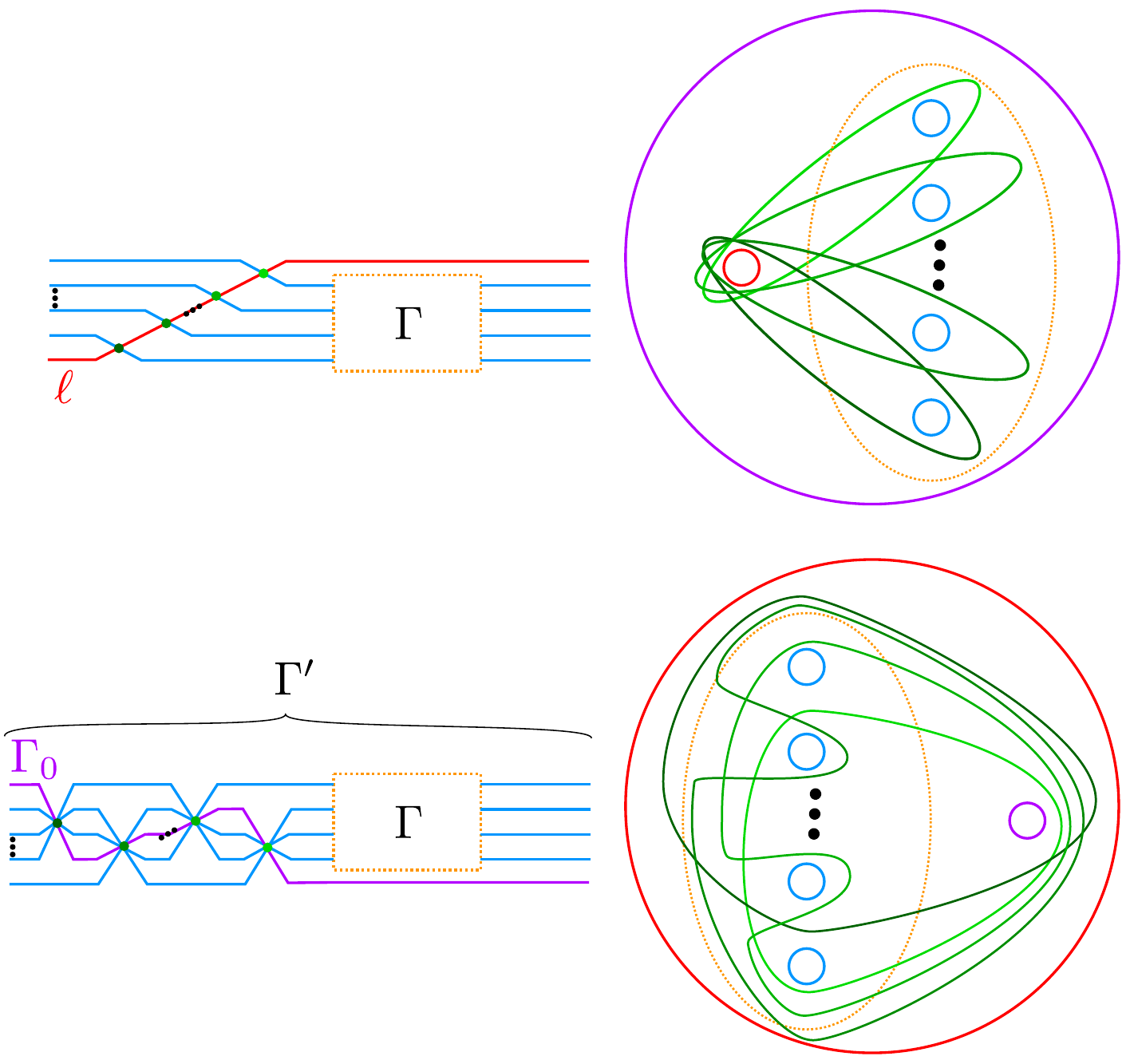}
		\caption{Wiring diagrams for $\Gamma\cup \ell$ and the inside-out version $\Gamma'$ along with a fiber for each showing the corresponding vanishing cycles.}
		\label{fig:insideout}
	\end{figure}

	\begin{prop}\label{prop:insideout} Let $(X, 0)$ be a sandwiched singularity with a decorated germ  $(C, w)$, and $(\Gamma, p)$ a DJVS arrangement that is compatible with $(C, w)$. 
	{Suppose that the incidence matrix $\mathcal{I}(\Gamma, p)$
	is not realizable by a picture deformation.} Then for any fixed integer $r>0$,  there exists a DJVS arrangement $(\Gamma', p')$ induced by $(\Gamma, p)$ and compatible with another decorated germ $(C', w')$, such that all components of $C'$ are tangent to one another up to order $r$, and the matrix 
	{$\mathcal{I}(\Gamma', p')$ is not realizable by a picture deformation.}
\end{prop}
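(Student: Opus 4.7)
My plan is to use the inside-out construction from Remark~\ref{rmk:insideout}, first enlarging the given arrangement by a smooth curvetta and then flipping the spinal open book, with a careful choice of the underlying sandwiched graph to force high-order tangency in the resulting germ.

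\textbf{Step 1 (Enlarge and flip).} Starting from the non-realizable $(\Gamma,p)$ compatible with $(C,w)$, I would first adjoin a smooth disk $\tilde\ell$ to $\Gamma$, transverse to each $\Gamma_i$ at generic points of $B$, mark every intersection of $\tilde\ell$ with the other components, and include any free marked points needed to match a prescribed weight. On the germ side this corresponds to augmenting the sandwiched graph by one additional auxiliary $(-1)$-vertex carrying a smooth curvetta $\ell$; the resulting DJVS arrangement $(\tilde\Gamma,\tilde p)$ is then compatible with an enlarged decorated germ $(\tilde C,\tilde w)$ with $\tilde C = C\cup\ell$. Since $(\Gamma,p)$ sits inside $(\tilde\Gamma,\tilde p)$ as a subarrangement, Corollary~\ref{non-realizable}(2) shows that $\mathcal{I}(\tilde\Gamma,\tilde p)$ also fails to be realizable. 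Because $\ell$ is smooth, the corresponding binding component of the spinal open book on $\partial W_{(\tilde\Gamma,\tilde p)}$ has multiplicity $1$, so Remark~\ref{rmk:insideout} lets me swap it with the original outer binding over $E_0$. This yields a DJVS arrangement $(\Gamma',p')$ compatible with a new decorated germ $(C',w')$ --- the one associated with the augmentation in which $\ell$'s vertex is the new $E_0'$ --- and $W_{(\Gamma',p')}=W_{(\tilde\Gamma,\tilde p)}$ as nearly Lefschetz fibrations.

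\textbf{Step 2 (Engineer tangency of order $r$).} The components of $C'$ are the transforms of the original $C_i$ together with one new smooth curvetta $\ell_0$ coming from the old outer boundary. Their pairwise tangency orders at $0$ are governed by the number of infinitely near points they share along the new blow-down sequence (as in the intersection-multiplicity discussion in the proof of Lemma~\ref{binding-from-function}). To force this common prefix to have length at least $r$, I would choose the placement of $\ell$ on a vertex $v$ that connects to the rest of the sandwiched graph through a long linear chain of $(-2)$-vertices of length at least $r$, using Remark~\ref{rmk:-2legs} to lengthen such a chain as needed without changing $C$ (only the weights, which can be absorbed by extra free marked points added in Step 1). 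In the flipped augmentation the first $r$ blow-downs then contract this common chain, forcing every component of $C'$ to pass through the same sequence of infinitely near points and thus to be tangent to every other component to order at least $r$.

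\textbf{Step 3 (Non-realizability and the hard part).} If $\mathcal{I}(\Gamma',p')$ were realizable by a picture deformation of $(C',w')$, then the equality $W_{(\Gamma',p')}=W_{(\tilde\Gamma,\tilde p)}$ combined with Remark~\ref{rmk:insideout} would give a picture deformation of $(\tilde C,\tilde w)$ realizing $\mathcal{I}(\tilde\Gamma,\tilde p)$, contradicting Step 1. The main obstacle will be Step 2: I would need to verify that inserting the chain of $(-2)$-vertices indeed produces exactly the desired tangency in the flipped germ $C'$, which requires careful tracking of how the new blow-down sequence distributes infinitely near points among every pair of components (including the new curvetta $\ell_0$), and also that this graph enlargement is consistent with maintaining $(\Gamma,p)$ as a subarrangement of $(\tilde\Gamma,\tilde p)$ throughout the construction.
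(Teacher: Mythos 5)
Your proposal follows the same skeleton as the paper's proof --- adjoin a smooth section-like disk $\tilde\ell$ (a line on the germ side) meeting everything transversally, mark the new intersections, get non-realizability of the enlarged incidence matrix from Corollary~\ref{non-realizable}(2), and then turn the open book inside out along the multiplicity-one hole of $\ell$ using Remark~\ref{rmk:insideout}, which also gives your Step 3 exactly as in the paper. Where you genuinely diverge is the mechanism for reaching tangency order $r$. The paper performs a single flip, reads off from the flipped wiring diagram that every pair of components of $\Gamma'$ meets in at least $m-1$ points (so the components of $C'$ have pairwise contact of order at least $m-1$), and then \emph{iterates} the add-a-line-and-flip procedure to push the order up to $r$. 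You instead give the single added line a large weight via free marked points, so that $E(\ell)$ sits at the end of a long chain of $(-2)$-vertices, and argue on the graph side that after the flip the new $E_0'=E(\ell)$ is a valence-one $(-2)$-vertex with no curvettas on it or on the chain, forcing every branch of $C'$ through a common string of infinitely near points of length controlled by $w(\ell)$. This works: since the flipped curvettas sit over the old $E_0$ and the $E(C_i)$ (Lemma~\ref{binding-from-fibration}), each chain vertex admits exactly one blown-up point in the reconstruction, so all branches share those points; equivalently, in the paper's own language each free marked point on $\tilde\ell$ becomes, after the flip, a transverse multi-point of \emph{all} components of $\Gamma'$, raising every pairwise intersection number by one. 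Your version buys a one-step construction in which the tangency order is decoupled from the number of components, at the cost of needing the blow-up bookkeeping you flag in Step 3 (which does go through). Two small inaccuracies, neither affecting the substance: adding the line does not merely add one auxiliary $(-1)$-vertex to the old graph --- it hangs a $(-2)$-chain off $E_0$ and lowers $E_0$'s self-intersection by one, with $E(\ell)$ at the chain's far end (this is in fact what your Step 2 needs); and in the flipped augmentation the chain is contracted by the \emph{last} blow-downs (equivalently created by the first blow-ups), not the first, though the conclusion about shared infinitely near points is unchanged.
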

	\begin{proof}  For simplicity we assume 
	that the germ $(C, w)$ consists of $m$ {\em smooth} components, so $(\Gamma, p)$ is compatible with a classical open book, where each component $\Gamma_i$ of $\Gamma$ corresponds 
	to one hole $h_i$  in the planar page, $i=1, \dots, m$.  (With minimal modifications, the same proof
	works for the general case as well.) We  also assume $m>2$.  
	
	We construct $(\Gamma', p')$ from $(\Gamma, p)$.  Add one complex line $l$  that intersects all components of $\Gamma$ at generic double points, and mark all new  intersections in addition to the marked points in $p$ to get the decoration $p_{\Gamma \cup l}$. 
	We can assume that the projection of these $m$ extra intersections to $D_x$ lies is a disk disjoint from the disk containing the 
	intersection points of $\Gamma$, for example by working with the wiring diagrams (we can assume that the extra intersections in the wiring diagram precede the rest of the arrangement). It follows that $(\Gamma \cup l,  p_{\Gamma \cup l} )$ is compatible 
	with the decorated germ $(C\cup l, w_{C \cup l})$, where $l$ is a 
	line transverse to all other components of $C$, and the weight $w_{C \cup l}$ is given by 
	$w_{C \cup l} (C_i)= w(C_i)+1$ for each component $C_i$ of $C$,  $w_{C \cup l}(l)= m$.
	Let $(X', 0)$ denote the corresponding singularity; the Lefschetz fibration 
	$W_{(\Gamma \cup l,  p_{\Gamma \cup l})}$ is then a filling of its contact link.
	Since the arrangement $(\Gamma \cup l,  p_{\Gamma \cup l})$ has a   subarrangement $(\Gamma, p)$ with a non-realizable incidence matrix, it follows that the incidence matrix  
	$\mathcal{I}(\Gamma \cup l,  p_{\Gamma \cup l})$ is not realizable by a picture deformation.

	We examine the vanishing cycles of the corresponding Lefschetz fibrations.
	The planar open book factorization induced  by  $(\Gamma \cup l,  p_{\Gamma \cup l})$  is related to that for $(\Gamma, p)$ as follows: the page of the open book gets a new hole $h_l$ corresponding to $l$, and the monodromy is composed with a product of positive Dehn twists, each of which encloses $h_l$ and one of the holes $h_i$ corresponding to components of $\Gamma$. 
	
	Now, we turn the page of this open book inside out by swapping the role of $\d h_l$ and the outer boundary component of the open book. In the new planar page identification with a disk with holes, instead of $h_l$ there is a hole $h_o$ corresponding to the old outer boundary component; $\d h_l$ becomes the new outer boundary. Let $(\Gamma', p')$ be the DJVS arrangement encoded by this new open book presentation. By Remark~\ref{rmk:insideout}, the incidence matrix  $\mathcal{I}(\Gamma', p')$ is not realizable by a picture deformation because we know that the matrix 
	$\mathcal{I} (\Gamma \cup l,  p_{\Gamma \cup l})$ is not realizable.

	It is easy to see how $\Gamma'$ is related to $\Gamma\cup l$ and $\Gamma$.  In the new open book factorization, the Dehn twists corresponding to intersection points between components of $\Gamma_i$ remain unchanged and are in the obvious 1-1 correspondence with the Dehn twists in the old factorization. What changes is the Dehn twists involving the extra hole: a Dehn twist enclosing $h_l$ and $h_i$ now becomes a Dehn twist in the new open book that encloses $h_o$ and all of the $h_j$'s {\em except} $h_i$. By our assumptions, these Dehn twists are precomposed with the rest of the monodromy, and the arrangement $\Gamma'$ is build by adding a disk $\Gamma_0$ to $\Gamma$ and precomposing with extra wiring with transverse intersection points  of $m$-tuples $(\Gamma_0, \Gamma_1, \dots, \hat{\Gamma}_i, \dots, \Gamma_m)$, in order. (Here, the hat indicates that the corresponding disk is not included.)   
	It is easiest to visualize this wiring diagram by placing the intersection points of $\Gamma_i$ with the $\C$-fiber of the diagram at roots of unity for $i=1, \dots, m$, with $\Gamma_0$ at the origin, and making all strands except one meet at the origin, in order. See Figure~\ref{fig:insideout}.
	
	For any two components of $(\Gamma', p')$ other than $\Gamma_0$, there are  $m-2>0$ additional transverse intersection points (outside of the part corresponding to $\Gamma$); $\Gamma_0$ intersects any other disk $m-1$ times. It follows that the order of tangency between any two components is at least $m-1$. (This can also be seen by using the ``daisy relation'' \cite{HMVHM} for the factorization to see $m-2$ parallel Dehn twists around the boundary component $\d h_l$.) To get a the given order of tangency $r$, we can repeat the above procedure (at the cost of adding an extra line several times, as needed). 
	\end{proof}

The inside-out topological  argument above shows that the arrangement $\Gamma'$ is not realizable by a small analytic deformation if the arrangement $\Gamma$ is not. We are able to give a direct analytic proof of this fact. We will assume that the components of $\Gamma$ are smoothly {\em embedded} graphical disks (equivalently, the compatible germ $C$ has smooth components). Then a small deformation of this germ also has smooth graphical components.
The weights and marked points do not play a role here, and we omit them from the statement below. (The incidence matrix records intersections between the components of $\Gamma$.) Note that the proposition below is stronger than what the topological argument implies: we only need a combinatorial assumption on the incidences of $\Gamma'$, without any specific requirements on the wiring diagram.  

\begin{prop} \label{prop:analytic-insideout} Suppose that $\Gamma$ a smooth disk arrangement
compatible with a germ $(C, w)$ with smooth components, such that the incidence matrix $\mathcal{I}(\Gamma)$ is not realizable by a small analytic deformation.  Consider an arrangement $\Gamma'$ related to $\Gamma$ as in Figure~\ref{fig:insideout}: $\Gamma'$ has an extra disk $\Gamma_0$ in addition to the components of $\Gamma$, and additional transverse intersection points  of $m$-tuples $(\Gamma_0, \Gamma_1, \dots, \hat{\Gamma}_i, \dots, \Gamma_m)$. Then the incidence matrix of the arrangement $\Gamma'$ is not realizable by a small analytic deformation of a compatible germ. 
\end{prop}

\begin{proof} Assume the contrary, namely, there are equations of the form $y = f_j(x, s)$ with analytic functions $f_j(x,s)$ that encode a disk arrangement with the incidences of $\Gamma'$ for $s>0$, specializing to a compatible curve germ for $s=0$. Substracting the equation $y = f_0 (x, s)$ of the disk $\Gamma_0$ from each of the equations, we can assume that $\Gamma_0$ is given by $y=0$. Then for each $s>0$, the  $m$-tuple intersections $(\Gamma_0, \Gamma_1, \dots, \hat{\Gamma}_i, \dots, \Gamma_m)$ are points $(x=r_i(s), y=0)$ for some analytic functions $r_i(s)$, and the components 
$\Gamma'_1, \dots, \Gamma_m'$ are given by equations of the form $y = q_i(x, s)$, where 
$$
q_i(x, s)= (x - r_1(s))\dots (x-r_{i-1}(s))(x-r_{i+1}(s)) \dots (x-r_m(s)) g_i(x, s).
$$
Here,    $g_i(x, s)$ are analytic functions that are non-vanishing for all $(x, s)$: by assumption, the only places where $y=q_i(x,s)$ intersects $y=0$ is at $r_1(s),\cdots, r_m(s)$, and these zeros have multiplicity one because the intersections with $y=0$ are transverse.

Let
$$
p(x, s) = (x-r_1(s)) (x-r_2(s)) \dots (x-r_m(s)), 
$$
and consider, for $s >0$, an arrangement given by equations 
$$
y = \frac{p(x, s)}{q_i(x, s)} = \frac{(x-r_i(s))} {g_i(x, s)}.
$$
 For $s=0$, these equations specialize to equations of a reducible curve germ at $0$. For $s>0$, the incidences of this new arrangement are the same as the incidences of $\Gamma$: indeed, the above formulas produce the incidences of
 the family $y = q_i(x, s)$ with all the points $(r_i(s), 0)$ removed. In both cases, incidences between the $i^{th}$ and $j^{th}$ components correspond exactly to $(x,s)$ solutions to $(x-r_i(s))g_j(x,s)=(x-r_j(s))g_i(x,s)$. It follows that if the combinatorial arrangement $\Gamma'$ admitted a realization by a small analytic deformation, the same would be true for $\Gamma$, a contradiction. 
\end{proof}
Since we work directly with equations of the curves, the above proposition and its proof can be restated for deformation realizations of arrangements up to isotopy (rather than in the combinatorial sense).

\begin{remark} Although it doesn't seem to lead to interesting topological consequences, there is a more obvious way to create deformation nonrealizable arrangements compatible with germs with smooth components tangent to a high order. We can simply use a known unrealizable arrangement and add a number of points where all the components intersect.  Indeed, let $\Gamma$ be an arrangement with $m$ smooth components  as in Proposition~\ref{prop:insideout}. 
Consider an arrangement $\Gamma'$, also with $m$ smooth components and singularities given by transverse multipoints only, such that the incidence relations of $\Gamma'$ are given by those of $\Gamma$, plus $k$ additional $m$-points. Suppose that $\Gamma'$ can be obtained by a small analytic deformation. As before, we can assume that one of the components of $\Gamma'$ is given by the line $y=0$ at all times. If $(x=r_i(s), y=0)$ are the $m$-points for $i=1, \dots, k$, then the deformation for the curves producing $\Gamma'$ is given by equations of the form
$$
y= (x - r_1(s))\dots (x-r_k(s)) g_i(x, s).
$$
Here, $g_i(x,s)$ are analytic functions (the one corresponding to the curve $y=0$ is identically 0). It is easy to see that functions $g_i(x,s)$ give a germ deformation with incidences of $\Gamma$ for $s>0$, so if the incidence matrix of $\Gamma$ is not realizable by a small analytic deformation, neither is the incidence matrix of $\Gamma'$. 
\end{remark}

Using the last two constructions together with  \cite[Lemma 7.5]{PS} producing unexpected pseudoline configurations, we can create various non-realizable arrangements with smooth components. However, more general criteria and similar direct analytic proofs of deformation nonrealizability remain elusive.
We have no results for arrangements corresponding to cuspidal germs, in particular, 
no direct explanation for nonrealizability of arrangements of nodal cubics in~\cite{BPS}. It would be very interesting to develop further tools to 
detect interesting arrangements.

\bibliography{references} 
\bibliographystyle{alpha}

\end{document}